\newcommand{\neutralize}[1]{\expandafter\let\csname c@#1\endcsname\count@}
\newtheorem{theorem}{Theorem}
\newtheorem{lemma}{Lemma}
\newtheorem{proposition}{Proposition}
\newtheorem{assumption}{Assumption}
\newtheorem{corollary}{Corollary}
\newtheorem{conjecture}{Conjecture}
\newtheorem{definition}{Definition}
\theoremstyle{definition}
\newtheorem{example}{Example}
\theoremstyle{remark}
\newtheorem{remark}{Remark}
  \renewenvironment{proof}[1][Proof]%
  {%
   \par\noindent{\bfseries\upshape {#1.}\ }%
  }%
  {\qed\newline}
\xpatchcmd{\proof}{\itshape}{\normalfont\proofnameformat}{}{}
\newcommand{\proofnameformat}{\bfseries}
\Crefname{assumption}{Assumption}{Assumptions}
    \let\Cref\crtCref
    \let\cref\crtcref
\DeclareDocumentCommand{\XDeclarePairedDelimiter}{mm}
 {
  \__egreg_delimiter_clear_keys: % reset to the default
  \keys_set:nn { egreg/delimiters } { #2 }
  \use:x % we want to expand the values of the token variables set with the keys
   {
    \exp_not:n {\NewDocumentCommand{#1}{sO{}m} }
     {
      \exp_not:n { \IfBooleanTF{##1} }
       {
        \exp_not:N \egreg_paired_delimiter_expand:nnnn
         { \exp_not:V \l_egreg_delimiter_left_tl }
         { \exp_not:V \l_egreg_delimiter_right_tl }
         { \exp_not:n { ##3 } }
         { \exp_not:V \l_egreg_delimiter_subscript_tl }
       }
       {
        \exp_not:N \egreg_paired_delimiter_fixed:nnnnn 
         { \exp_not:n { ##2 } }
         { \exp_not:V \l_egreg_delimiter_left_tl }
         { \exp_not:V \l_egreg_delimiter_right_tl }
         { \exp_not:n { ##3 } }
         { \exp_not:V \l_egreg_delimiter_subscript_tl }
       }
     }
   }
 }
\XDeclarePairedDelimiter{\supnorm}{
  left=\lVert,
  right=\rVert,
  subscript=\infty
  }
    \newcommand{\lp}{\left(}
    \newcommand{\rp}{\right)}
    \newcommand{\Med}{\mathrm{Med}\,}
    \newcommand{\E}{{\mathbb E}}
    \newcommand{\R}{{\mathbb R}}
    \renewcommand{\P}{{\mathbb P}}
    \newcommand{\PP}{\mathbb{P}}
    \newcommand{\nm}{\eps_{*}}
    \newcommand{\nms}{\eps_{*}^2}
    \renewcommand{\b}[1]{\boldsymbol{\mathbf{#1}}}
    \renewcommand{\vec}[1]{\b{#1}}
    \newcommand{\Xf}[1]{G_{#1}}
    \newcommand{\eps}{\epsilon}
    \newcommand{\conv}{\mathrm{conv}}
    \newcounter{rcnt}[section]
    \newcommand{\PPf}{\PP^{(n)}}
    \def\argmin{\mathop{\rm argmin}}
    \def\argmax{\mathop{\rm argmax}}
    \newcommand{\Ef}{\mathrm{E}_{f^{*}}}
    \newcommand{\diam}{\operatorname{diam}}
    \newcommand{\Span}{\operatorname{Span}}
    \newcommand{\QQ}{\mathbb Q}
    \newcommand{\erm}{\widehat{f}_n}
    \newcommand{\ft}{f^{*}}
    \newcommand{\Lhat}{\mathcal{\widehat{L}}}
    \newcommand{\Gr}{{\operatorname{Gr_{n + 1, n}}}}
    \def\ddefloop#1{\ifx\ddefloop#1\else\ddef{#1}\expandafter\ddefloop\fi}
    \def\ddef#1{\expandafter\def\csname c#1\endcsname{\ensuremath{\mathcal{#1}}}}
    \DeclarePairedDelimiter{\abs}{\lvert}{\rvert} %
    \DeclarePairedDelimiter{\crl}{\{}{\}}
    \title{On the Variance, Admissibility, and Stability of Empirical Risk Minimization}
\date{}
\begin{document}
\author[1]{ Gil Kur}
\author[2]{Eli Putterman}
\author[1]{Alexander Rakhlin}
\affil[1]{MIT}
\affil[2]{Tel Aviv University}

\maketitle

\begin{abstract}
    It is well known that Empirical Risk Minimization (ERM) may attain minimax suboptimal rates in terms of the mean squared error \citep{birge1993rates}. In this paper, we prove that, under relatively mild assumptions, the suboptimality of ERM \emph{must} be due to its large bias. Namely, the variance error term of ERM is bounded by the minimax rate. In the fixed design setting, we provide an elementary proof of this result using the probabilistic method. Then, we extend our proof to the random design setting for various models. 
    In addition, we provide a simple proof of Chatterjee's admissibility theorem \cite[Theorem 1.4]{chatterjee2014new}, which states that in the fixed design setting, ERM cannot be ruled out as an optimal method, and then we extend this result to the random design setting. We also show that our estimates imply the \textit{stability} of ERM, complementing the main result of \cite{caponnetto2006stability} for non-Donsker classes.  Finally, we highlight the somewhat irregular nature of the loss landscape of ERM in the non-Donsker regime, by showing that functions can be close to ERM, in terms of $L_2$ distance, while still being far from almost-minimizers of the empirical loss.
\end{abstract}
% --follows via Brouwer's fixed-point theorem
\section{Introduction}\label{submission}

Maximum Likelihood (MLE) and the method of Least Squares (LS) are fundamental procedures in statistics. The study of the asymptotic consistency of MLE has been central to the field for almost a century \citep{wald1949note}. Along with consistency, its failures have been thoroughly investigated throughout the history of statistics \citep{NeymanScott48, bahadur1958examples, Ferguson82}. In the setting of non-parametric estimation, the seminal work of \citep{birge1993rates} provided \textit{sufficient} conditions for minimax optimality (in a non-asymptotic sense) of LS while also presenting an example of a model class where this basic procedure is sub-optimal. Three decades later, we still do not have necessary and sufficient conditions for minimax optimality of LS---equivalently, Empirical Risk Minimization (ERM) with square loss---in general. While the present paper does not resolve this question, it makes several steps towards understanding its behavior in large models.

Beyond intellectual curiosity, the question of the minimax optimality of the LS is driven by the desire to understand the current practice of fitting large or overparametrized models, such as neural networks, to data (cf. \citep*{belkin2019does,bartlett2020benign}). At present, there is little theoretical understanding of whether unregularized data-fitting procedures are optimal, and studying their statistical properties may lead to new methods with improved performance.

In addition to minimax optimality, many other essential properties of LS on large models are yet to be understood. For instance, little is known about its \textit{stability} under data perturbations. It is also unclear whether \textit{approximate} minimizers of empirical loss enjoy similar statistical properties as the exact solution. Conversely, one may ask whether in the landscape of possible solutions, a small perturbation of the minimizer output by LS itself is a near-optimizer of empirical loss.

The contribution of this paper is to provide novel insights into the aforementioned questions for convex classes of functions in a quite generic setting. In detail, we show the following:
\begin{enumerate}
    \item  We prove that in the fixed design setting, also known as the Gaussian sequence model, the variance (error term) of ERM is upper bounded by the minimax rate of estimation. Thus, if  ERM is minimax suboptimal, it must be due to the bias (in terms of the bias-variance decomposition).
    
    \item In the random design, obtaining a similar result becomes much subtler. We use two different approaches to control the variance: 
    \begin{enumerate}
        \item  We derive an upper bound for the variance under a uniform boundedness assumption on the class, via an empirical process approach. This bound also implies that, under classical assumptions in M-estimation, the variance is at most the minimax rate. 
    \item  Under an isoperimetry assumption on the noise, we upper bound the expected conditional variance of ERM. Furthermore, under an additional isoperimetry assumption on the covariates, we upper bound the variance of ERM on any robust learning architecture (namely, a class consisting of functions which are all $O(1)$-Lipschitz) which almost interpolates the observations, cf. \cite{bubeck2023universal}).

    \end{enumerate}

    \item It is known that ERM is always admissible in the fixed design setting \citep*{chatterjee2014new,chen2017note}; that is, for any convex function class, no estimator has a lower error than ERM (up to a multiplicative absolute constant) on \textit{every} regression function. We provide a short proof of this result via a fixed-point theorem. Using a similar approach, we also prove a somewhat weaker result in the random design case, generalizing the main result of \cite{chatterjee2014new}. 

    \item We show that ERM in the fixed design setting is stable, in the sense that all almost-minimizers (up to the minimax rate) of the squared loss are close in the space of functions. In the random design setting, we prove a non-asymptotic analogue of the asymptotic analysis in \cite{caponnetto2006stability}, and extends its scope to non-Donsker classes.

    \item The last item implies that for non-Donsker classes, any almost-minimizer of the squared loss is close to the minimizer with respect to the underlying population distribution. Our final result shows that the converse is incorrect. We prove that for any non-Donsker class of functions, there exists a target regression function such that, with high probability, there exists a function with high empirical error near the ERM solution. This means that the landscape of near-solutions is, in some sense, irregular. 
\end{enumerate}

\subsection*{Conclusions} 
Our results show that the variance error term of ERM is at most the minimax rate in two distinct regimes. First, in the classical regime \citep*{van2000empirical}, where the function class is fixed and the number of samples is increasing. And secondly,  the  ``benign overfitting'' setting \citep*{belkin2019does,bartlett2020benign}, in which the ``capacity'' of the class is large compared to the number of samples. In both settings, our work implies that the minimax optimality of ERM depends only on its bias term. For models with ``few'' parameters, \emph{computationally efficient} bias correction methods do exist and are commonly used in practice (cf. \citep*{efron1994introduction}). However, these methods may fail in large function classes due to bias, leading to statistical sub-optimality. Our work reveals the importance of developing computationally efficient debiasing methods for rich function classes, including nonparametric and high-dimensional models. Our main message is that the occasional poor performance of ERM in practice can be explained by its large bias, thereby motivating the development of new debiasing procedures. If such methods exist, they may significantly improve the statistical performance of ERM.

\subsection{Prior Work}
\paragraph{Stability of ERM} The stability of learning procedures \cite{bousquet2002stability}, which was an active area of research in the early 2000's, has recently seen a resurgence of interest because of its connections to differential privacy and to robustness of learning methods with respect to adversarial perturbations. In the interest of space, we only compare present results to those of \cite{caponnetto2006stability}. In the latter paper, the authors showed that the $L_1$-diameter of the set of almost-minimizers of empirical error (with respect to any loss function) asymptotically shrinks to zero as long as the perturbation is $o(n^{-1/2})$ and the function class is Donsker. The analysis there relies on passing from the empirical process to its associated Gaussian process in the limit and on studying the uniqueness of its maximum using anti-concentration properties. While the result there holds without assuming that the class is convex, it is limited by (a) its asymptotic nature and (b) the assumption that the class is not too complex. In contrast, the present paper uses more refined non-asymptotic concentration results, at the expense of additional assumptions such as convexity and minimax optimal lower and upper isometry remainders. Crucially, the present result, unlike that of \cite{caponnetto2006stability}, holds for non-Donsker classes---those for which the empirical process does not converge to the Gaussian process.

\paragraph{Shape-constrained regression} The term ``shape-constrained regression'' refers to function classes consisting of functions with a certain ``shape'' property, such as convexity or monotonicity \citep{samworth2018special}. In these problems, a common theme is that the statistical behavior of the class undergoes a phase transition when the domain dimension $d \geq 1$ reaches a certain value. For instance, in convex (Lipschitz) regression, the ERM procedure is \textbf{only} minimax\footnote{In other shape-constrained models, however, the ERM is minimax optimal even in high dimensions, such as isotonic regression and log-concave density estimation \citep*{han2019isotonic,kur2019optimality,carpenter2018near, kur2020convexsub,Kur20}} optimal when $ d \leq 5$ \citep*{seijo2011nonparametric,han2016multivariate,kim2016global,seijo2011nonparametric,guntuboyina2012optimal}. Our results show that ERM's sub-optimality in shape-constrained regression is solely due to its high bias. These results also align with the empirical observation that for the problem of estimation of convex sets, the ERM has a bias towards ``smooth" convex sets \citep*{soh2019fitting, ghosh2021max}.

\paragraph{High-dimensional statistics} In classical statistics, the MLE typically has a low bias compared to its variance, and the standard approach is to introduce bias into the procedure to reduce the variance, overall achieving a better trade-off, see \cite[\S 1]{sur2019modern} and references therein. In contrast, in high-dimensional models, the MLE may suffer from high bias even in tasks such as logistic regression and sparse linear regression, \cite{candes2020phase,javanmard2018debiasing}. Our results align with this line of work, showing that high bias may also arise in regression over rich function classes.
\subsection*{Organization}
In \S \ref{S:MR},  we present our model and all our results.  In \S \ref{ss:ronT}, we discuss the optimality of our bounds in the random design setting. In \S \ref{S:SP}, we provide sketches of some of our proofs to give some flavor to our techniques. Finally, in \S \ref{S:pfs}, we provide the proofs for all our results.
\section{Main Results}

\label{S:MR}
\subsection{Preliminaries}\label{sec:prelims}
Let $\mathcal X$ be some fixed domain, $\cF$ be a class of functions from $\cX$ to $\R$, and $\ft \in \cF$ an unknown target regression function. We are given  $n$ data points $X_1 \ldots,X_n \in \mathcal X$ and $n$ noisy observations
\begin{align}
\label{eq:data_model}
    Y_i = \ft(X_i) + \xi_i, \quad i = 1, \ldots, n
\end{align}
which we denote by $\cD:= \{(X_i,Y_i)\}_{i=1}^{n}$, and $\vec \xi := (\xi_1,\ldots,\xi_n)$ for the the random noise vector.
% (To exclude trivialities, we always assume $n \ge 2$.)

In the \textbf{fixed design} setting, the observations  $X_1=x_1,\ldots,X_n=x_n$ are arbitrary and fixed, and we denote the uniform measure on this set of points by $\PPf$. 

In the \textbf{random design} setting, the data points $\vec X:=(X_1,\ldots,X_n)$ are drawn i.i.d. from a probability distribution over $\cX$, denoted by $\PP$, and the noise vector $\vec \xi$ is drawn independently of $\vec X$. Note that this model is general enough to cover the high-dimensional setting, as both the function class $\cF$ and the distributions of $\vec \xi$ and $\vec X$ are allowed to depend on the number of samples $n$.

An estimator for the regression task is defined as a measurable function $ \bar{f}_n : \cD \mapsto \{\cX\to\mathbb{R}\}$, that for any realization of the input $\cD$, outputs some real-valued measurable function on $\cX$. The risk of $\bar{f}_n$ is defined as
    \begin{equation}\label{eq:risk_def}
        \cR(\bar{f}_n,\cF,\QQ):= \sup_{\ft \in \cF} \E_{\cD} \int (\bar{f}_n - \ft)^2 d\QQ,
    \end{equation}
    % \footnote{Recall that $\int g~  d\PP^{(n)} = \frac{1}{n}\sum_{i=1}^n g(X_i)$ is the empirical average, and $\int g~  d\PP = \E g(X)$ is the expected value.}
where $\QQ =\PP$ in the random design case, and $\QQ=\PPf$ in the fixed design case. Note that in fixed design, the expectation $\E_{\cD}$ is taken over the noise $\vec\xi$, while in random design the expectation $\E_{\cD}$  is taken both over the random data points $\vec X$ and noise $\vec \xi$.  The minimax rate is defined via 
\begin{align}
\label{eq:minimax_rate_def}
    \cM(n,\cF,\QQ):= \inf_{\bar{f}_n}\cR(\bar{f}_n,\cF,\QQ).
\end{align}
In the fixed design setting, we also denote the minimax rate by $\cM(\cF,\PPf)$, as the dependence in $n$ is already present in $\PPf$.  

The most natural estimation procedure is the LS or ERM with squared loss, defined as
\begin{equation}\label{Eq:LSE}
\erm \in \argmin_{f \in \mathcal F} \sum_{i = 1}^n (f(X_i) - Y_i)^2.
\end{equation}
\noindent{When studying fixed design, we will abuse notation and treat $\erm$ as a vector in $\mathbb R^n$. We emphasize that many of our results hold for many other estimators, including various \emph{regularized} ERM procedures (see the relevant remarks below).  In both the fixed and random design settings, we shall assume the following:} 
\begin{assumption}\label{A:Convex}
$\cF$ is a closed convex subset of $L_2(\QQ)$, where $\QQ \in \{\PPf,\PP\}$.
\end{assumption}
The convexity of $\cF$ means that any $f,g \in \cF$ and $\lambda \in [0,1]$: $\lambda f + (1-\lambda)g \in \cF$; closedness means that for any sequence $\{f_n\}_{n = 1}^\infty \subset \cF$ converging to $f$ with respect to the norm of $L_2(\QQ)$, the limit $f$ lies in $\cF$. The closedness ensures that $\erm$ is well-defined.

Assumption~\ref{A:Convex} is standard in studying the statistical performance of the ERM (cf. \cite*{lee1996importance,bartlett2005local,mendelson2014learning}).
In particular, under this assumption, the values of $\erm$ at the observation points $X_1, \ldots, X_n$ are uniquely determined for any $\vec{\xi}$. Note that, in the random design case, the values of a function $f \in \mathcal F$ at the points $X_1, \ldots, X_n$  may not uniquely identify $f$ in the class.

In addition to $\erm$, we analyze properties of the set of $\delta$-approximate minimizers of empirical loss, defined for $\delta>0$ via
\begin{equation}\label{Eq:almost}
\cO_{\delta}:= \left\{f \in \cF: \frac{1}{n}\sum_{i=1}^{n}(Y_i - f(X_i))^2  \leq  \frac{1}{n}\sum_{i=1}^{n}(Y_i - \erm(X_i))^2 + \delta \right\}.
\end{equation}
Note that $\cO_{\delta}$ is a random set, in both fixed and random designs. 

It is well-known that the squared error of an estimator, $\bar{f}_n$, in particular that of LS, decomposes into variance and bias components (respctively):
\begin{equation}\label{eq:biasvariance_decomp}
\begin{aligned}
    \E_{\cD} \int (\erm - \ft)^2d\QQ &= \E_{\cD} \int (\erm - \E_{\cD}\erm )^2d\QQ+  \int (\E_{\cD} \erm - \ft)^2d\QQ :=V(\erm) + B^2(\erm),
\end{aligned}
\end{equation}
where $\QQ = \PPf$ in the fixed design setting and $\QQ=\PP$ in the random design setting. Also, for simplicity of the presentation of our results, we denote the maximal variance error term of $\bar{f}_n$ by  $ \cV(\erm,\cF,\QQ)$, i.e. 
\[
    \cV(\erm,\cF,\QQ) := \sup_{\ft \in \cF}V(\erm).
\]
In the random design setting, we also have the law of total variance:
\begin{equation}\label{Eq:Eve}
\begin{aligned}
    V(\erm) &=   \E_{\vec X}\E_{\vec \xi}\left[\int \left(\erm - \E_{\vec \xi}\left[\erm |\vec X \right]\right)^2\,d\PP \right]+\E_{\vec X}\left[\int\left(\E_{\vec \xi}\left[\erm |\vec X \right] - \E_{\vec X,\vec \xi}\left[\erm\right] \right)^2d\PP\right]
    \\&:= \E_{\vec X} V_{\vec \xi}(\erm|\vec X) + V(\E_{\vec \xi}(\erm|\vec X)),
\end{aligned}
\end{equation}
we refer to the two terms as the expected conditional variance and the variance of the conditional expectation, respectively. 
% Thus, in the random design setting, the mean squared error decomposes into three terms:
% \begin{equation}\label{Eq:biasvariance_decompFull}
%     \E_{\cD} \int (\erm - \ft)^2d\PP = \E_{\vec X} V(\erm|\vec X) + V(\E_{\vec \xi}(\erm|\vec X)) + B^2(\erm).
% \end{equation}
We conclude this introductory section with a bit of notation and a definition.

\paragraph{Notation} We use the notation of $\asymp,\gtrsim,\lesssim$ to denote equality/inequality up to an absolute constant. We use $\| \cdot \|=\|\cdot\|_{L_2(\PP)}$ to denote the $L_2(\PP)$ norm, and $\| \cdot \|_{n}$ to denote the $L_2(\PP^{(n)})$ norm (that is equal to the Euclidean norm scaled by $1/\sqrt{n}$). Finally, given a function $f: S \to T$ between metric spaces, we define its Lipschitz constant as $\| f \|_{\rm Lip} = \sup_{a, b \in T, a \neq b} \frac{d_T(f(a), f(b))}{d_S(a, b)},$ and we say ``$f$ is $L$-Lipschitz'' when its Lipschitz constant is at most $L$. Finally, we denote by $\diam_{\QQ}(\cH)$ the $L_2(\QQ)$ diameter of a set of functions $\cH$.
\begin{definition}\label{Def:Covering}
 Let $\eps \geq 0$, $\cF\subseteq\{\mathcal{X}\to\mathbb{R}\}$ and $d(\cdot,\cdot)$ a pseudo-metric on $\cF$. We call a set $S \subset \cF$ an $\epsilon$-net of $\cF$ with respect to $d$ if for any $f \in \cF$ there exists $g \in S$ with $d(f, g) \le \epsilon$. We denote by $\cN(\eps,\cF,d)$ the $\epsilon$-covering number of $\cF$ with respect to $d$, that is, the minimal positive integer $N$ such that $\cF$ admits an $\epsilon$-net of cardinality $N$.
\end{definition}

% Now, we can present all the main results in this section. We first treat the case of fixed design in \S\ref{sec:fixed}, and then the random design case in \S\ref{sec:random}.

% \noindent{In simple words, this assumption says that  the diameter of the class $\cF$ does not depend on $n$. We utilize this assumption when we aim to describe the minimax rate in terms of \eqref{Eq:minimax}  above.} 

\subsection{Fixed design setting}
\label{sec:fixed}
In this part, we consider some fixed $(\cF,\PPf)$ and assume the following: 
\begin{assumption}\label{A:normal}
The noise vector $\vec \xi$ is distributed as an isotropic Gaussian, i.e. $\vec\xi \sim N(0,I_{n \times n})$.
\end{assumption}
Also, for every $f \in \cF$ and $r \geq 0$, we denote by
\[
    B_n(f,r):= \{ g \in \cF: \|g - f\|_{n} \leq r\}.
\]
Our first result provides an exact characterization of the variance (up to a multiplicative absolute constant) under Assumptions \ref{A:Convex}-\ref{A:normal}. In order to state it, for a fixed $\ft \in \cF$, we define the following set:
\begin{equation}\label{Eq:cH}
    \cH_{*}:= B_n(\E_{\cD}\erm,2\sqrt{V(\erm)}).   
    % \{ f \in \cF: \|f - \E_{\cD}\erm\|_n^2 \leq 4 \cdot V(\erm)\}.
\end{equation}
In words, when the underlying function $\ft$ is fixed, we consider the ERM as a random vector (depending on the noise), whose expectation we denote by $\E_{\cD} \erm$. $\cH_*$ is then just a neighborhood around the expected ERM with a radius of order the square root of the variance error term of $\erm$, when the underlying function is $\ft \in \cF$.  %We emphasize that in the definition of $\cH_{*}$, there is nothing special about the constant $4$; it may be replaced by any absolute constant. 

We can now state our first result, which uses the notion of the set $\cO_\delta$ of $\delta$-approximate minimizers from \eqref{Eq:almost}.
\begin{theorem}\label{T:Fix}
Under Assumptions \ref{A:Convex}-\ref{A:normal}, for any $\ft \in \cF$, the following holds:
\[
V(\erm) \asymp \cM(\cH_{*},\PPf),
\]
and in particular $\cV(\erm,\cF,\PPf) \lesssim \cM(\cF,\PPf)$.  Furthermore, for $\delta:= \delta(\ft,n) \lesssim \cM(\cH_{*},\PPf)$, the event
% there exists an absolute constant $c > 0$, such that 
\begin{equation}\label{Eq:Stability}
     \sup_{f \in \cO_{\delta}} \int (f - \E_{\cD} \erm)^2d\PPf \asymp  \cM(\cH_{*},\PPf)
\end{equation}
holds with probability  at least $\max\{1 - 2\exp(-cn \cdot \cM(\cH_{*},\PPf)),0.9\}$, where $c \in (0,1)$ is an absolute constant.
\end{theorem}
\noindent{Theorem~\ref{T:Fix} establishes our first claim: the variance of ERM is bounded above (up to a multiplicative absolute constant) by the minimax rate of estimation on $\cH_*$. Since $\cH_*$ is contained in $\mathcal F$, the variance of ERM is upper bounded by $ \cM(\cF,\PPf)$. \textbf{This implies that if ERM is minimax sub-optimal, then it must be due to its bias}. The theorem also incorporates a stability result: not only is the  ERM close to its expected value $\E_{\cD} \erm$ with high probability, but any approximate minimizer (up to an excess error of $\delta^2$) is close to $\E_{\cD} \erm$ as well.}
% and cClearly, it implies the optimality of the variance error term of $\erm$ without Assumption \ref{A:Diameter}, since $\cM(\cH_*,\PPf) \leq \cM(\cF,\PPf)$ for any $\ft \in \cF$.}
% We remark that we prove this bound for any $\vec \xi$ that satisfies the LCP property (see Equation \eqref{Eq:CCP} below), in particular for isotropic Gaussian noise.}
% In theorem \ref{T:UB} below, we also show that $\cV(\erm,\cF,\PPf) \lesssim \cM(\cF,\PPf)$, without Assumption \ref{A:}
% \begin{remark}
%     We remark that Theorem \ref{T:Fix} holds for any estimator $\bar{f}_n$ for which the map $\vec{\xi} \mapsto \bar{f}_n(\vec{\xi})$ is $O(1)$-Lipschitz. Furthermore, Theorem \ref{T:Fix} also holds for the misspecified model (i.e., even if $\ft$ does not lie in $\cF$)
% \end{remark}
Our next proposition complements Theorem \ref{T:Fix} above, providing a lower bound on $\cV(\erm,\cF,\PPf)$:
\begin{proposition}\label{T:FixLower}
Under Assumptions \ref{A:Convex}-\ref{A:normal}, the following holds:
\[\sqrt{\cV(\erm,\cF,\PPf)} \gtrsim \cM(\cF,\PPf).\]
\end{proposition}
\noindent{Note that there is a multiplicative gap of order $\cM(\cF,\PPf)$ between of Theorem \ref{T:Fix}  and Proposition~\ref{T:FixLower}. We leave it as an open problem whether the bound of Proposition~\ref{T:FixLower} can be improved under these general assumptions.}

Next, we state the \textit{admissibility} theorem of ERM,  established by \cite{chatterjee2014new}.  
% The original proof is pretty complicated, 
\begin{theorem}[Chatterjee's Admissibility Theorem]\label{C:A}
     Let $\bar{f}_n:\cD \to \R^n$ be some estimator. Then, under Assumptions \ref{A:Convex}-\ref{A:normal}, there exists an underlying function $\ft \in \cF$ (that depends on $\bar{f}_n$) such that 
     \begin{equation}\label{Eq:Admis}
              \E_{\cD}\int (\erm - \ft)^2d\PP^{(n)}\leq C \cdot \E_{\cD}\int (\bar{f}_n - \ft)^2d\PP^{(n)},
     \end{equation}
     where $C \geq 1$ is a universal constant that is independent of $\bar{f}_n$, $\cF$, $\PPf$. 
\end{theorem}
In words, this result states that for any estimator $\bar{f}_n$, there \textit{exists} a target function $\ft\in\cF$ such that ERM over the data drawn according to \eqref{eq:data_model} has error which is no worse (up to an absolute constant) than that of $\bar{f}_n$. Hence, while ERM may be suboptimal for some models $\ft\in\cF$, it cannot be ruled out completely as a learning procedure. 

The original proof is highly non-trivial and quite complicated. \cite{chen2017note} provided a bit simpler proof, with a better estimate of $1.65 \cdot 10^5$ for the constant $C > 1$ in \eqref{Eq:Admis}. In this work, we provide a new \textbf{approach that offers a simplified perspective} on this profound theorem. In addition, it yields a much better bound of $10^2$ rather than $1.65 \cdot 10^5$; as we have not attempted to optimize the constants, we believe this can be improved further. 
% Intuitively, Theorem \ref{T:Fix} implies that $\erm$ is admissible if at least one $\ft \in \cF$ exists with a ``small bias''--as it is $1$-Lipschitz estimator in $\vec \xi$.
We show that admissibility hinges on the existence of a target regression function $\ft \in \cF$ such that the estimator not only has a ``small bias'' but is also ``stable'' around it. Remarkably, under compactness of $\cF$, the existence of such a target function is ensured by a purely topological argument—Brouwer's fixed-point theorem. From a statistical perspective, this has a simple interpretation: a ``stable'' estimator cannot have a ``large'' bias on every target function within a compact function class.

%  In this work 

% In  offers an alternative proof for Chatterjee's admissibility theorem. Our proof shows that under  there always exists an $\ft \in \cF$ such that $\erm$ exhibits a small bias. Our proof \cite{chen2017note} also gave an explicit upper bound of  Making the constants explicit in our proof 

% (cf. \cite*[Pg. 3007]{wei2020gauss})
% first glance, as it contradicts a false belief in the statistics community that suggests the $\ft\equiv 0$ is maximizing the error of the LSE. This belief emerges from the fact that in 
Also, it is worth noting that if the class is centrally symmetric, i.e., $\cF = -\cF$, then $\ft \equiv 0$, as we know that $B^2(\erm) = 0$, and therefore, the zero function is estimated in error that is at most of the minimax rate. This result may seem surprising, as in the centrally symmetric case, it holds that
\[
  0 \in \argmax_{\ft \in \cF} w_n(B_n(\ft,r)),
\]
where $w_n(\cdot)$ denotes the \emph{Gaussian} complexity---which implies that \emph{local} minimax rate around the origin is maximal (see Lemma \ref{Lem:Minimax} below). We also refer to  \cite*[Pg. 3007]{wei2020gauss} and a  recent paper of \citep{aolaritei2025revisiting}  for further details.
% \end{remark}
% \begin{remark}
%        \cite{chen2017note} also gave an explicit upper bound of $1.65 \cdot 10^5$ for the implied constant in \eqref{Eq:Admis} (that is, they gave a lower bound for their constant $C^*$, which is the inverse of the implied constant in \eqref{Eq:Admis}). Making the constants explicit in our proof yields a bound of the order $10^3$ rather than $10^5$; we have not attempted to optimize the constants, so we believe this can be improved further.
% \end{remark}
       % where $C_1 > 0$ is a universal constant that is independent of $\cF$, $\PPf$.
 \subsubsection*{Concluding Remarks}
 \begin{remark}
     The first part of of Theorem \ref{T:Fix} holds for any estimator $\widehat{g}_n$ for which the map $\vec{\xi} \mapsto \widehat{g}_n(\vec{\xi})$ is $O(1)$-Lipschitz, namely, 
     \[
            V(\widehat{g}_n) \asymp \cM(\cH_{*},\PPf).
     \]
 Furthermore, when the distribution of the noise $\vec \xi$ satisfies the Lipschitz Concentration Property (see Assumption \ref{A:CCP} below), our proof implies that
     \[
        V(\erm) \lesssim \cM(\cH_{*},\PPf).
     \]
    Note that the left term is the \textbf{minimax rate under isotropic Gaussian noise} that potentially can be larger than the minimax rate under this noise distribution.
\end{remark}
\begin{remark}\label{Rem:StabilityTh}
One can verify that for $ \delta(\ft,n) \gg \cM(\cH_{*},\PPf)$, Theorem \ref{T:Fix} cannot be true in its full generality. Therefore, the stability threshold of $\delta(\ft,n)$ is tight, up to a multiplicative absolute constant.
\end{remark}
\begin{remark}
         The proof of Proposition \ref{T:FixLower} is specific to $\erm$ and cannot be extended immediately to other estimators. However, it holds for any isotropic noise distribution.
\end{remark}
 \begin{remark}
Under the additional assumption of $\cF$ being a \emph{compact} class, our proof demonstrates that the admissibility property of Theorem~\ref{C:A} is valid for any $\widehat{g}_n$ such that $\vec \xi \mapsto \widehat{g}_n(\vec \xi)$ is $O(1)$-Lipschitz. 
\end{remark}

% In these terms, Corollary \ref{C:WAF} simply states that in fixed design and under Assumptions \ref{A:Convex}-\ref{A:normal}, the ERM is weakly admissible.}

% In simple words, $\cH_{*}$ is the neighborhood around the expected $\erm$ when the underlying function is $\ft$ with a radius  the variance error term (up to . 

\subsection{Random design setting}\label{sec:random}
We now turn our attention to the random design setting. Here, we establish similar results to the previous sub-section, albeit under additional assumptions, and with significantly more effort. Unlike the fixed design case, we cannot provide an exact characterization of the variance of ERM. We shall use two different approaches to estimate the variance of the error term. In the first approach, we use classical tools of empirical process theory together with assumptions that are commonly used in M-estimation \citep{van2000empirical}. The second approach, which is inspired by our fixed-design approach, relies heavily on isoperimetry and concentration of measure (cf. \cite{ledoux2001concentration}).

Throughout this part, we assume for simplicity of presentation that the $L_2(\PP)$-diameter of the function class is independent of $n$.
\begin{assumption}\label{A:Diameter}
    There exist absolute constants $C,c > 0$ such that  $ c \leq \diam_{\PP}(\mathcal F) \leq C$. 
    % where $\QQ \in \{\PP,\PPf\}$.
\end{assumption}
% \noindent{This assumption is used to ensure that the minimax rate has a clean representation.}
% to understand the behaviour of  the variance of $\erm$ when $\vec X,\vec \xi$ satisfy the LCP property (that we discussed above). 

% Recall \eqref{eq:minimax_rate_def} where we defined the minimax rate for  $(n,\cF,\PP)$. It turns out that this quantity depends on the geometric complexity of $\cF$, which can be expressed in terms of covering numbers
The classical work of \cite{yang1999information} provides a characterization of the minimax rate  $(n,\cF,\PP)$  under appropriate assumptions (such as normal noise, and uniform boundedness of $\cF$, and richness of $\cF$). They proved that the minimax rate is the square of the solution of the following (asymptotic) equation 
% the asymptotic equation
\begin{equation}\label{Eq:minimax}
    \log \cN(\eps,\cF,\PP) \asymp n\eps^2,
\end{equation}
 where $\cN(\eps,\cF,\PP)$ is the $\eps$-covering number of $\cF$ in terms of $L_2(\PP)$ metric (see Def. \ref{Def:Covering} above). We denote this point by $\eps_{*}=\eps_{*}(n)$, and even under less restrictive assumptions, $\nms$ also lower bounds the minimax rate, up to a multiplicative factor of $O(\log n)$. Also, we remark that it is well known that ERM may not achieve this optimal rate \citep{birge1993rates} for large (so-called non-Donsker) function classes.

 % \gkcomment{think: Unfortunately,
% For technical reasons (see Remark \ref{Rem:Tech} below), we assume the following:
% \begin{assumption}\label{A:SoP}
% $\nms \gtrsim \log(n)/n$, where $\nm$ is the solution of $n\eps^2 \asymp \log \cN(\eps,\cF,\PP)$.
% \end{assumption}

% In this sub-section, we will upper bound the variance (or sometimes the conditional variance) by $O(\nms)$ -- which we mentioned earlier equals to the minimax rate and appropriate assumptions.
% in this section that the error rate of the class $\cF$ is more than parametric:

We also introduce the following additional notations and definitions: First, $\PP_n$ denotes the (random) uniform measure over $\vec X =(X_1,\ldots,X_n)$. Next, following \cite{bartlett2005local}, we define the lower and upper isometry remainders of $(\cF,\PP)$ for a given $n$. These remainders measure the discrepancy between $L_2(\PP)$ and a ``typical'' $L_2(\PP_n)$,  here ``typical'' means for most of the realizations of $\vec X$.\footnote{These remainders first emerged in the field of metric embeddings, specifically in the definition of quasi-isometries (cf.  \cite{ostrovskii2013metric}).}
% emerged from the field of metric embeddings

In order to introduce these isometry remainders, we first define for each realization of the input $\vec X$, the constants $\cI_{L}(\vec X)$ and $\cI_{U}(\vec X)$ as the minimal numbers $A_{X},B_{X} \geq 0$, respectively, such that the following holds:
\[
    \forall f,g \in \cF:  \    4^{-1}\int(f - g)^2d\PP - A_{\vec X} \leq \int(f - g)^2d\PP_n \leq 4\int(f - g)^2d\PP + B_{\vec X}.
\]
Note that as $A_{\vec X}$ and $B_{\vec X}$ increase, the geometry of $L_2(\PP_n)$ and $L_2(\PP)$ over $\cF$ becomes less similar. For example, in the extreme case of  $A_{\vec X} = B_{\vec X} = 0$, it implies the $L_{2}(\PP)$ and $L_{2}(\PP_n)$ induce the same topology over $\cF$. In words, the lower isometry is the minimal threshold that satisfies the following: all $f,g \in \cF$ that are $\omega(\cI_{L}(\vec X))$ far from each other in $L_2(\PP)$, must be \emph{at least} $\Omega(\|f-g\|)$ far in $L_2(\PP_n)$. The upper isometry remainder implies the converse. To provide further intuition on these remainders, for instance, observe that $\cI_L(\vec X)$ upper bounds on the diameter in $L_2(\PP)$ of possible solutions of ERM; namely, one has
\begin{equation}
    \sup_{\ft \in \cF,\vec \xi \in \R^n}\mathrm{Diam}_{\PP}(\{f \in \cF: \   f|_{\vec X} = \erm|_{\vec X}\})^2 \leq 4 \cdot \cI_{L}(\vec X),
\end{equation}
where $f|_{\vec X} \in \R^n$ is the restriction of $f \in \cF$ on $\vec X$. Finally, the isometry remainders $\cI_{L}(n)$, $\cI_{U}(n)$ are defined as  the ``typical'' values of $\cI_{L}(\vec X)$, $\cI_{U}(\vec X)$:
\begin{definition}\label{D:BrackL}
The lower and upper isometry remainders $\cI_{L}(n)$ and $\cI_{U}(n)$ are defined as the minimal constants $A_n,B_n \geq 0$ (respectively) such that
 \begin{align*}
    \Pr_{\vec X}(\cI_{L}(\vec X) \leq A_{n}, \cI_{U}(\vec X) \leq B_{n}) \ge 1 - n^{-1}.
 \end{align*}
 \end{definition}
 % Cthese two quantities measure the discrepancy between the  $L_2(\PP)$ and a ``typical'' $L_2(\PP_n)$.
\noindent{In the classical regime \citep{van2000empirical}, it is considered to be a standard assumption (such as equivalent entropy and entropy with bracketing numbers) that 
\[
\max \{\cI_{L}(n),\cI_{U}(n) \} \lesssim \nms.
\] 
However, in the high dimensional setting, it may happen that the lower isometry remainder is significantly smaller than the upper isometry remainder, e.g., $\cI_{L}(n) \lesssim \nms $ and $\cI_{U}(n) \gg \nms$ (cf. \cite{liang2020multiple,mendelson2014learning}).

Finally, we remind the reader that $\erm$ is uniquely defined on the data points $\vec X$ when $\cF$ is a convex closed function class, but it \emph{may not} be unique over the entire $\cX$ (as multiple functions in $\cF$ may take the same values at $X_1, \ldots, X_n$). In \S \ref{ss:emp}, the results hold for any possible solution of $\erm$ over $\cX$, whereas in \S \ref{ss:Iso}, we (implicitly) assume that $\erm$ is equipped with a selection rule such that it is also unique over the entire $\cX$ (e.g., choosing the minimal norm solution \citep{hastie2022surprises,bartlett2020benign}); i.e, $\erm: \cD \to \cF$.
\begin{remark}\label{Rem:DiscussLower}
 In the seminal works of Mendelson (cf. \cite{mendelson2017extending} and references within), the small ball condition was introduced to estimate the statistical performance of ERM under less restrictive assumptions as uniform boundedness, Koltchinskii–Pollard entropy condition (cf. \cite{rakhlin2017empirical}) or finite VC-dimension (cf. \cite{mendelson2014learning}). Roughly speaking, under this condition, the lower isometry remainder is relatively small, i.e.  
\[
 \cI_{L}(n) \ll \nms.
\]
 However, it is insufficient to obtain a nice control over the upper isometry remainder, i.e. it may even happen that
\[
    \cI_{U}(n) \asymp 1.
\]
The ideas that appear in the small-ball method suggest that indeed a small lower isometry remainder is a mild assumption over a model $(n,\cF,\PP)$. 

% as it also provides an upper bound over 
% \[
%     \sup_{\ft \in \cF}\E_{\vec X}\mathrm{Diam}_{\PP}(\{f \in \cF: \forall 1 \leq i \leq n \    f(X_i) = \ft(X_i) \}),
% \]
% as we mentioned earlier.
\end{remark}

\subsubsection{Bounding the variance via empirical processes approach}\label{ss:emp}
Here, we assume that the function class and the noise are uniformly bounded.
\begin{assumption}\label{A:boudn}
There exist universal constants $\Gamma_1,\Gamma_2 > 0$ such that $\cF$ is uniformly upper-bounded by $\Gamma_1$, i.e. $\sup_{f \in \cF}\|f\|_{\infty} \leq \Gamma_1$; and the components of $\vec\xi = (\xi_1,\ldots,\xi_n)$ are i.i.d. zero mean with variance one and are almost surely bounded by $\Gamma_2$. 
\end{assumption}
The uniform boundedness assumption on the noise is taken to simplify the proof, which uses Talagrand's inequality. This can be relaxed to i.i.d. sub-Gaussian noise, at the price of a multiplicative factor of $O(\log n)$ in the error term in Theorem \ref{T:Rnd} below. 
 
% we remark that there is nothing special about the constant $4$ and it can be replaced by any absolute constant.
% and
% \[
% \cI_{U}(\vec X) :=
% \min \left\{A > 0 \left| \,\,
%     \forall f,g \in \cF:  \quad  2^{-1}\int(f - g)^2d\PP_n - A \leq  \int(f - g)^2d\PP  \right.\right\}
% \]
% For example, the lower isometry remainder, $\cI_{L}(n)$, defined as the following threshold: for all $f,g \in \cF$ such that  $\| f - g \|^2 \gtrsim \cI_{L}(n)$, then $\| f - g\|_n^2 \geq  4^{-1}\|f-g\|^2$.

% that the distance with respect to the underlying distribution $\PP$ between any two functions in $\cF$ is small if the distance between them with respect to the empirical measure is small, up to a multiplicative constant and an additive remainder, and conversely for the upper isometry remainder.

 \begin{definition}\label{D:eps_U} Set $\eps_{U} := \max\{\nm,\tilde{\eps}\}$, where $\tilde{\eps}$ is the solution of 
\begin{equation}\label{Eq:StatUI}
     \cI_{U}(n) \cdot \log \cN(\eps,\cF,\PP) \asymp n\eps^4.
\end{equation}
 \end{definition}

\noindent{Note that when $\cI_{U}(n) \lesssim \nms$,  $\eps_{U} \asymp \nm$, while if $\cI_U(n, \PP) \gg \nms$ then $\eps_U \gg \nm$.
The following is the  main result of this part:}
\begin{theorem}\label{T:Rnd}
Set $\eps_{V}^2:=\max\{\eps_{U}^2,\cI_{L}(n)\}$, then under Assumptions \ref{A:Convex},\ref{A:Diameter},\ref{A:boudn} the following holds with probability of at least $1 - n^{-1}$:
 \[
    \sup_{f \in \cO_{\delta_n}}\int (f - \E_{\cD}\erm)^2d\PP \lesssim \eps_{V}^2,
 \]
where $\delta_n = O(\eps_{V}^2)$; and in particular   $\cV(\erm,\cF,\PP) \lesssim \eps_V^2.$
\end{theorem}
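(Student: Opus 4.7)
The plan is to reduce the random-design statement to the fixed-design Theorem \ref{T:Fix} by conditioning on the covariates $\vec X$, using the isometry remainders $\cI_L,\cI_U$ to transfer between $L_2(\PP_n)$ and $L_2(\PP)$ norms. Throughout, I would work on the good event $\cE$ on which both $\cI_L(\vec X)\le\cI_L(n,\PP)$ and $\cI_U(\vec X)\le\cI_U(n,\PP)$; by Definition \ref{D:BrackL} and a union bound, $\Pr(\cE)\ge 1-2n^{-1}$. The uniform boundedness in Assumption \ref{A:boudn} makes Talagrand's inequality available for the various concentration arguments, and the bounded i.i.d.\ noise satisfies the Lipschitz concentration property, so the LCP-extension of Theorem \ref{T:Fix} (cf.\ the remark after it) applies conditionally on $\vec X$.

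Step 1 localizes $\cO_{\delta_n}$ around $\erm$ in $L_2(\PP_n)$ via convexity: nonexpansiveness of the projection onto the closed convex set $\cF$ in $L_2(\PP_n)$ gives $\|f-\erm\|_n^2 \le \|Y-f\|_n^2 - \|Y-\erm\|_n^2 \le \delta_n$ for every $f\in\cO_{\delta_n}$, and the lower isometry on $\cE$ promotes this to $\|f-\erm\|^2 \le 2\delta_n + 2\cI_L(n,\PP) \lesssim \eps_V^2$. Step 2 bounds $\|\erm - \E_{\vec\xi}[\erm\mid\vec X]\|^2$ by reducing to fixed design: conditional on $\vec X\in\cE$, the upper isometry yields $\log\cN(\eps,\cF,\PP_n)\lesssim\log\cN(c\eps,\cF,\PP)$ for $\eps^2\gtrsim\cI_U(n,\PP)$, so by Definition \ref{D:eps_U} the fixed-design critical radius of $(\cF,\PP_n)$ is $\lesssim\eps_U$. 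Theorem \ref{T:Fix} then gives $\|\erm - \E_{\vec\xi}[\erm\mid\vec X]\|_n^2\lesssim\eps_U^2$ with conditional probability at least $1-2\exp(-cn\eps_U^2)\ge 1-n^{-1}$ (using Assumption \ref{A:SoP}), and the lower isometry once more promotes this to $\|\erm - \E_{\vec\xi}[\erm\mid\vec X]\|^2\lesssim\eps_V^2$.

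Step 3, the hard step, is to bound $\|\E_{\vec\xi}[\erm\mid\vec X]-\E_\cD\erm\|^2$ with high probability over $\vec X$---the variance of the conditional expectation viewed as a random element of $L_2(\PP)$. An expectation bound is essentially immediate: by Jensen and the orthogonality characterization of $\E_\cD\erm$ as the closest deterministic element of $L_2(\PP)$ to $\E_{\vec\xi}[\erm\mid\vec X]$, the expected squared deviation is dominated by the conditional-variance bound of Step 2, hence is $\lesssim\eps_V^2$. Amplifying this into a $1-n^{-1}$ probability bound is the principal obstacle; I would apply Talagrand's inequality to the localized class $\{f-g:f,g\in\cF,\;\|f-g\|\lesssim\eps_V\}$, using Assumption \ref{A:boudn} to obtain the necessary sub-Gaussian tails.

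Combining the three steps via the triangle inequality gives $\sup_{f\in\cO_{\delta_n}}\|f-\E_\cD\erm\|^2\lesssim\eps_V^2$ on an event of probability at least $1-2n^{-1}$. The ``in particular'' statement $\cV(\erm,\cF,\PP)\lesssim\eps_V^2$ then follows from this high-probability bound by capping the squared deviation on the complementary event using Assumption \ref{A:Diameter}, with Assumption \ref{A:SoP} ensuring $\eps_V^2\gtrsim n^{-1}$ so that the $O(n^{-1})$ contribution from the bad event is absorbed.
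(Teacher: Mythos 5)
The proposal takes a genuinely different route from the paper---condition on $\vec X$, reduce to fixed design via the isometry remainders, then decompose the variance by the law of total variance---but this route has several substantive gaps, and the paper's actual argument (pigeonhole over an $\eps_U$-net and Talagrand's inequality applied to the localized losses $L_i$ on the joint $(\vec X,\vec\xi)$ distribution, with the strong-convexity/parallelogram inequality locating $\erm$ near a \emph{deterministic} net center $f_{i_*}$) is designed precisely to avoid them.

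First, Step~2 rests on applying the LCP extension of Theorem~\ref{T:Fix} conditionally on $\vec X$, asserting that i.i.d.\ bounded noise satisfies the LCP. This is false in general: boundedness gives McDiarmid-type concentration, which for a $1$-Lipschitz function of $\vec\xi\in\RR^n$ (w.r.t.\ $\ell_2$) is weaker by a factor of $n$ in the exponent than the LCP bound $\exp(-c_Lt^2)$; and Talagrand's convex-distance inequality does not help because $\vec\xi\mapsto\|\erm(\vec\xi)-g\|_n$ is not convex. Remark~\ref{rem:lower_iso} of the paper explicitly flags that the conditional reduction to Theorem~\ref{T:Fix} requires LCP, whereas the Talagrand-based proof of Theorem~\ref{T:Rnd} handles general sub-Gaussian (in particular bounded) noise, and notes this is ``significantly more general.''

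Second, even granting concentration, the covering-number transfer in Step~2 only yields $\log\cN(\eps,\cF,\PP_n)\lesssim\log\cN(c\eps,\cF,\PP)$ for $\eps\gtrsim\sqrt{\cI_U}$, so the conditional fixed-design critical radius $\eps^*_n(\vec X)$ is only controlled to be $\lesssim\max\{\nm,\sqrt{\cI_U}\}$. When $\cI_U\gg\nms$ one has $\eps_U^2<\cI_U$ (as follows from Definition~\ref{D:eps_U} and the monotonicity of $\eps\mapsto\log\cN(\eps,\cF,\PP)$), so the conditional critical radius can strictly exceed $\eps_U$, and the resulting bound $\max\{\nms,\cI_U\}$ is weaker than the claimed $\eps_V^2=\max\{\eps_U^2,\cI_L\}$. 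The definition of $\eps_U$ through $\cI_U\cdot\log\cN(\eps,\cF,\PP)\asymp n\eps^4$ is a different balancing equation, and nothing in Step~2 connects it to $\eps^*_n(\vec X)$.

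Third, and most fundamentally, Step~3 is not just ``the principal obstacle'' but actually does not work as stated. The claim that ``by Jensen and orthogonality the expected squared deviation is dominated by the conditional-variance bound of Step~2'' conflates the two terms of the law of total variance: $\E_{\vec X}V(\erm|\vec X)$ (what Step~2 controls) and $V(\E_{\vec\xi}[\erm|\vec X])$ (what Step~3 needs) are \emph{orthogonal} contributions and neither dominates the other. The orthogonality characterization of $\E_\cD\erm$ would let you bound $V(\E_{\vec\xi}[\erm|\vec X])$ by $\E\|\E_{\vec\xi}[\erm|\vec X]-g\|^2$ for some fixed deterministic $g$, and Jensen by $\E\|\erm-g\|^2$---but Step~2 produces no such $g$. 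This is exactly the difficulty the paper resolves by identifying the net center $f_{i_*}=\argmin_i\E L_i$ as a deterministic anchor and showing $\|\erm-f_{i_*}\|\lesssim\eps_V$ and $\|\E_\cD\erm-f_{i_*}\|\lesssim\eps_V$ directly, bypassing the variance decomposition altogether. (Indeed the paper's discussion in \S\ref{ss:ronT} stresses that $V(\E_{\vec\xi}[\erm|\vec X])$ is the term capturing how the empirical geometry of $\cF$ varies with $\vec X$, and is the one that cannot be tamed by the lower-isometry remainder alone.) Steps~1 and the final truncation argument for the ``in particular'' statement are fine and agree with the paper, but without a correct treatment of Step~3 the argument does not go through.
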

\noindent{Theorem~\ref{T:Rnd} is a generalization of Theorem~\ref{T:Fix} to the random design case, and its proof uses the strong convexity of the loss and Talagrand's inequality. In \S \ref{ss:ronT} below, we discuss this bound in the context of ``distribution unaware'' estimators. We remark that this Theorem  extends the scope of \cite{caponnetto2006stability} to non-Donsker classes.} 

An immediate and useful corollary of this result is that if we have sufficient control of the upper and lower isometry remainders, the variance will be minimax optimal:
\begin{corollary}\label{C:LUS}
Under Assumptions \ref{A:Convex},\ref{A:Diameter},\ref{A:boudn} and $\max\{\cI_{L}(n), \cI_{U}(n)\} \lesssim \nms$, the following holds: 
 \[
\cV(\erm,\cF,\PP) \lesssim \nms.
 \]
% if $\delta_n \lesssim \nms$; in particular, $.$
\end{corollary}
In the classical regime, the assumption of $\max\{\cI_{L}(n), \cI_{U}(n)\} \lesssim \nms$ is considered to be standard in the empirical process and shape constraints literature, as it holds for many classical models (see Remark \ref{Rem:Brack} below). 
\subsubsection*{Concluding Remarks}
\begin{remark}\label{rem:lower_iso}
Note that Corollary \ref{C:LUS} may also be derived directly from Theorem \ref{T:Fix} if the noise is assumed to be standard Gaussian. Yet, this corollary holds for any isotropic sub-Gaussian noise -- which is significantly more general.
\end{remark}
\begin{remark}\label{Rem:Brack}
   The assumption of $\max\{\cI_{L}(n),\cI_{U}(n)\} \lesssim \nms$ holds for uniformly bounded classes whose $\eps$-covering numbers are asymptotically equal to the $\eps$-covering numbers with bracketing (see e.g. \cite{van2000empirical,birge1993rates}), which is considered a mild assumption for analyzing ERM on non-parametric and shape-constrained classes. It also holds for classes that satisfy the Koltchinskii-Pollard condition \citep{rakhlin2017empirical} or the $L_2-L_{2+\delta}$ entropy equivalence condition  (see \cite{lecue2013learning} and references therein). In the classical regime, i.e. when $\cF$ is fixed and $n$ grows, it is hard to construct function classes that does not satisfy this assumption for $n$ that is large enough \citep{birge1993rates}. 
\end{remark}
\begin{remark}\label{Rem:Bias}
  Note that a bound similar to that of Theorem \ref{T:Rnd} cannot hold for the bias error term. Indeed, one can construct a class $\cF$ with $\cI_{L}(n) \lesssim \nms$ and $\cV(\erm,\cF,\PP) \asymp \nms$ for which the bias error term $\sup_{\ft \in \cF}B^2(\erm) \asymp 1$, moreover, for this class one has
    \[
        \E_{\vec X,\vec \xi}\left\|\erm - \E_{\vec \xi}\left[\erm |\vec X\right]\right\|_{n}^2 \asymp 1.
    \]
    That is, neither the bias nor the \emph{empirical} variance converge to zero. A remarkable consequence of our results is that even though the ERM only observes the random empirical measure $\PP_n$, its variance, measured in terms of $\PP$, converges to zero when $\cI_{L}(n) \to 0$.
\end{remark}
\subsubsection{Bounding the variance via isoperimetry approach}\label{ss:Iso}
To motivate this part, we point out that just requiring that $\cI_{L}(n) \lesssim \nms$ is considered to be a mild assumption (see Remark \ref{Rem:DiscussLower} above). However, the upper bound in Theorem \ref{T:Rnd} depends on the upper isometry remainder; we would like to find conditions under which this dependency can be removed. Moreover, note that the isometry remainders are connected to the geometry of $(\cF,\PP)$ and not directly to the stability properties of the \emph{estimator}. Using a different approach, based on isoperimetry, we will upper-bound the variance of ERM based on some ``interpretable'' stability parameters of the estimator itself. These stability parameters will be data-dependent relatives of the lower isometry remainder. Unlike the previous part, we do not assume that the function class $\mathcal{F}$ is uniformly bounded by a constant independent of the sample size $n$.
% , only that its $L_2(\mathbb{P})$ space.a order $O(1)$ in the

First, we introduce the definition of Lipschitz Concentration Property (LCP):
\begin{definition}\label{Def:LCP}
    Let $\vec Z=(Z_1,\ldots,Z_m)$ be a random vector taking values in $\cZ^{\otimes m}$. $\vec Z$ satisfies the LCP with constant $c_{L} > 0$, with respect to a metric $d:(\cZ^{\otimes m},\cZ^{\otimes m}) \to \R^{+}$, if for all $F: \cZ^m \to \R$ is $1$-Lipschitz, the following holds:
\begin{equation}\label{Eq:CCP}
    \Pr(|F(\vec Z) -  \E F(\vec Z)| \geq t) \leq 2\exp(-c_L t^2).
\end{equation}
\end{definition}
The LCP property is also known as the isoperimetry condition (cf. \cite[\S 1.3]{bubeck2023universal}). Now, we state our first assumption:
 \begin{assumption}\label{A:CCP}
$\vec\xi$ is an isotropic random vector satisfying \eqref{Eq:CCP} with constant $c_{L} = \Theta(1)$, with respect to the Euclidean norm in $\R^n$.
\end{assumption} 
This assumption is stronger than being sub-Gaussian \citep{boucheron2013concentration}, and yet it is significantly less restrictive than requiring normal noise (in which case $c_{L} = 1/2$ \citep{ledoux2001concentration}).
\begin{remark}
Herbst’s argument \cite[\S 3.1.2]{wainwright2019high} implies that the LCP
holds for any random vector $\vec \xi$ satisfying a log-Sobolev inequality; the converse is not true in general. However,
in the seminal work of \cite{milman2009role}, it was shown that if $\vec \xi$ is assumed to be log-concave, then $\vec \xi$ which
satisfies a LCP with constant $C_{L}$ also satisfies a log-Sobolev inequality with constant $\Theta(C_L)$.
\end{remark}

% describes the minimax rate in terms of $\eps$-covering of the entire class, 
 Recall that $\nm$ is defined as the stationary point of $n\eps^2 \asymp \log \cN(\eps,\cF,\PP)$, and that the conditional variance of $\erm$, which is a function of the realization $\vec X$ of the input, is defined as
\[
 V(\erm|\vec X):= \E_{\vec \xi}[\|\erm - \E_{\vec \xi}[\erm|\vec X]\|^2];
\]
that is, we fix the data points $\vec X$, and take the expectation over the noise.

The formulation of the following definition involves a yet-to-be-defined (large) absolute constant $M > 0$, which will be specified in the proof of Theorem \ref{T:RndLowerIso} (see \S \ref{SP:RndLowerIso} below). We use the notation of $\vec \ft = (f^*(X_1),\ldots,f^*(X_n))$, and $\vec Y = \vec \ft +\vec \xi$.

\begin{definition}\label{D:Brack}
For each realization $\vec X$ and $\ft \in \cF$, let $\rho_{S}(\vec X,\ft)$ be defined as the minimal constant $\delta(n)$ such that
\begin{equation}\label{eq:a_brack}
    \Pr_{\vec \xi}\left\{\vec \xi \in \mathbb R^n: \forall \vec \xi' \in B_n(\vec\xi, M\nm): \|\erm(\vec X, \vec \ft + \vec \xi') - \erm(\vec X, \vec \ft + \vec\xi)\|^2 \le \delta(n)\right\} \geq \exp(-c_2n\nms).
\end{equation}
where $B_n(\vec \xi, r) = \{\vec \xi' \in \mathbb R^n: \|\vec\xi - \vec\xi'\|_n \leq r\}$, and  $c_2 > 0$ is an absolute constant. 
\end{definition}
We set $\rho_{S}(\vec X):= \sup_{\ft \in \cF}\rho_{S}(\vec X,\ft)$. Note that $\rho_{S}(\vec X)$ measures the optimal radius of stability (or ``robustness'') of  $\erm$ to perturbations of the noise \emph{when the underlying function and data points $\vec X$ are fixed}. This is a weaker notion than the lower isometry remainder; in fact, one can verify that $\rho_{S}(\vec X) \lesssim \max\{\cI_L(\vec X),\nms\}$ for every realization $\vec X$ (see Lemma \ref{L:Stable} for completeness). Now, we are ready to present our first theorem:

\begin{theorem}\label{T:RndLowerIso}
    Under Assumptions \ref{A:Convex},\ref{A:Diameter},\ref{A:CCP}, the following holds for every realization $\vec X$ of the data:
    % for any underlying regression function $\ft \in \cF$, the following holds:
    \[
       V(\erm|\vec X) \lesssim \max\{\rho_S(\vec X,\ft),\nms\},
    \]
    and in particular
    $
        \sup_{\ft \in \cF}\E_{\vec X} V(\erm|\vec X) \lesssim \max\{\cI_{L}(n),\nms\}.
    $
\end{theorem}
\noindent{Note that if $\cI_{L}(n) \lesssim \nms$ -- a relatively mild assumption -- then we obtain that the expected conditional variance is minimax optimal. However, we believe that it is impossible to bound the total variance via the lower isometry remainder alone.  Intuitively, $\erm$ only observes a given realization $\vec X$, and in general, the geometry of $\cF$ may ``look different'' under different realizations if $\cI_L(n)$ is large, see \S \ref{ss:ronT} below for further details.} 

In our next result, we identify a  model  which we can bound the \textit{total} variance of $\erm$ by the lower isometry remainder. 
% we assume that $X \sim \PP$ satisfies an isoperimetry condition and that $\cF$ is a robust function class, i.e. $\cF \subset \{ \cX \to \R: \|f\|_{\rm Lip} = O(1)\}$. (A function class all of whose functions are $O(1)$-Lipschitz is often referred to the literature as a ``robust learning architecture.'')
% \begin{assumption}
%   There exists an absolute cosntant $C \geq 0$ such that  $\cF \subset \{\cX \to \R: \|f\|_{\rm Lip} \leq C\}$.
% \end{assumption}
To state the next assumption, we fix a metric $d: \cX \times \cX \to \mathbb R^{+}$ on $\cX$, and denote by $d_n$ the metric on $\cX^n$ given by $d_n(\vec X, \vec X')^2 = \sum d(X_i, X_i')^2$. 
\begin{assumption}\label{A:IsoData}
$\vec X \sim \PP^{\otimes n}$ satisfies \eqref{Eq:CCP} with respect to the metric $d_n(\cdot,\cdot)$, and with constant $c_{X} > 0$ that \emph{only} depends on $\PP$.
% Furthermore, we assume that 
\end{assumption}
Note that it is insufficient to assume that $X \sim \PP$ satisfies an LCP, since this does not imply that $\vec X$ satisfies an LCP with a constant independent of $n$ (w.r.t. to $d_n(\cdot,\cdot)$). However, if $X \sim \PP$ satisfies a concentration inequality which tensorizes ``nicely,'' such as a log-Sobolev or $W_2$-transportation cost inequality (cf. \cite[\S 5.2, \S 6.2]{ledoux2001concentration}), then $\vec X \sim \PP^{\otimes n}$ does satisfy this LCP property. 
% \end{remark}

Next, we assume that with high probability, $\erm$ is at-least almost interpolating the observations:
\begin{assumption}\label{A:Interpolate}
There exist absolute constants $c_I, C_I > 0$, such that the following holds:
\begin{center}
    $
    \Pr_{\cD}\left(n^{-1}\sum_{i=1}^{n}(\erm(X_i) - Y_i)^2 \leq C_I \nms \right) \geq 1 - \exp(-c_I n\nms).
$
\end{center}
\end{assumption}
This assumption is quite common in the study of ``rich'' high-dimensional models, for example, in the setting of \textit{benign overfitting} literature. In this setting, the the function class $\cF$ may depend on $n$ and  is ``large enough'' to interpolate the measurements, see, e.g., \cite{belkin2019does,bartlett2020benign,liang2020just}).

Finally, we introduce another stability notion. Recall the random set $\cO_{\delta} \subset \cF$ of almost-minimizers of the empirical loss, as defined in \eqref{Eq:almost} above; note that, in the random design setting, $\cO_\delta$ depends on both $\vec X$ and $\vec \xi$. The random variable $\diam_{\PP}(\cO_{\delta})$ can be thought of as measuring the stability of the ERM with respect to imprecision in the minimization algorithm (cf. \citep{caponnetto2006stability}). The formulation of the following definition involves another yet-to-be-defined (large) absolute constant $M' > 0$, which will be specified in the proof of Theorem \ref{T:RandomFull} (see \S \ref{ss:rndFull} below), as well as the constant $c_I$ from Assumption \ref{A:Interpolate}.

\begin{definition}\label{Def:rho_O}
$\rho_{\cO}(n,\PP,\ft)$ is defined as the smallest  $\delta(n) \geq 0$ such that  
\begin{equation}\label{eq:rho_O}
\Pr_{\cD}(\diam_{\PP}(\cO_{M'\nms}) \leq \sqrt{\delta(n)}) \ge 2 \exp(-c_I n \nms),
\end{equation}
where $c_{I} \geq 0$ is \emph{the same} absolute constant defined in Assumption \ref{A:Interpolate}.
\end{definition}

In order to understand the relation between this and the previous stability notions, note that under Assumption \ref{A:Interpolate} and the event $\cE$ of Definition \ref{Def:rho_O}, we have that on an event of nonnegligible probability, $\rho_{S}(\vec X,\ft) \le \rho_\cO(n, \PP,\ft)$; in addition, $\rho_{\cO}(n, \PP, \ft) \lesssim \max\{\mathcal I_L(n),\nms\}$, see Lemma \ref{L:Brack} below. Under these additional two assumptions and the last definition, we state our bound for the total variance of $\erm$: 

\begin{theorem}\label{T:RandomFull}
  Under Assumptions \ref{A:Convex},\ref{A:Diameter},\ref{A:CCP}-\ref{A:Interpolate}, the following holds:
    \[        
    V(\erm)\lesssim c_{X}^{-1} \cdot \sup_{\ft \in \cF}\|\ft\|_{\rm Lip} \cdot \max \{\nms,\rho_{\cO}(n,\PP,\ft)\},
    \]
    and in particular one has
$
    \cV(\erm,\cF,\PP) \lesssim c_{X}^{-1} \cdot \sup_{\ft \in \cF}\|\ft\|_{\rm Lip} \cdot \max \{\nms,\cI_{L}(n)\}$.
\end{theorem}
\noindent{Note that  when $\cF$ is a robust learning architecture (i.e. $\cF \subset \{\cX \to \R: \|f\|_{\rm Lip} = O(1)\}$), our bound is optimal. Interestingly, the assumptions of Theorem \ref{T:RandomFull} coincide with those of the model considered in the recent paper of \cite{bubeck2023universal}. Also note that the last theorem connects the total variance of $\erm$ to a ``probabilistic'' threshold for the $L_2(\PP)$-diameter of the \textit{data-dependent} set of $\Theta(\nms)-$approximating solutions of $\erm$. }
\subsubsection*{Concluding Remarks}
\begin{remark}
     One may suspect that the assumptions of almost interpolation and robustness are incompatible, which would render our theorem vacuous. However, perhaps counter-intuitively, in the high-dimensional setting these assumptions can coexist. For example, interpolation with $O(1)$-Lipschitz functions may be possible when the ``intrinsic'' dimension of $\cX$  is $\Omega(\log(n))$ (depending on the richness of $\cF$), though it is generally impossible when the dimension is $o(\log(n))$ (this follows from the behaviour of the entropy numbers of the class of Lipschitz functions; cf. \cite{dudley1999uniform}).
\end{remark}
\begin{remark}\label{Rem:TalLip}
Using Assumptions \ref{A:Convex},\ref{A:Diameter},\ref{A:CCP},\ref{A:IsoData}, one may prove the same bound as in Theorem \ref{T:Rnd}, i.e. that $\cV(\erm, \cF,\PP) \lesssim \eps_{V}^2$, without requiring the noise or the function class to be uniformly bounded. The idea is to obtain the crucial concentration bounds in the proof of Theorem \ref{T:Rnd} by using the LCP properties of $\vec \xi$ and $X\sim \PP$ along with the robustness of $\cF$, rather than via Talagrand's inequality.
\end{remark}
\begin{remark}\label{R:thm_cmp}
    Comparing Theorem \ref{T:RandomFull} to Theorem \ref{T:Rnd}, one sees that if $\cI_L(n) \lesssim \nms$,  the minimax optimality of the variance is implied either by a bound of $\nms$ for the upper isometry constant $\cI_{U}(n)$ or by the Lipschitz and interpolating Assumptions \ref{A:IsoData}-\ref{A:Interpolate}, one may wonder whether the latter set of assumptions actually themselves imply such a bound on $\cI_{U}(n)$.
    
    In fact, the opposite is true: these assumptions are mutually exclusive as soon as the minimax rate is $o(1)$. Indeed, the assumption that the function class is almost interpolating (Assumption \ref{A:Interpolate}) means that $\vec\erm$ closely tracks the observation vector $\vec Y$ (though Assumption \ref{A:Interpolate} only requires this to hold a non-negligible event, the proof of Theorem \ref{T:RandomFull} shows that up to increasing the absolute constant $C$, almost interpolation actually holds with high probability). The variance of $\vec Y$ is bounded below by that of $\vec \xi$, which is $1$ (measured with respect to $\|\cdot\|_n$), which implies easily that the \textit{empirical} variance of $\erm$ is of order $O(1)$ as well.

    On the other hand, a bound of $\nms$ on the upper isometry constant means that up to a multiplicative factor and an additive error of $\nms$, when $\|\erm - \E \erm\|^2$ is small then so is $\|\erm(\vec X,\vec Y) - \E \erm(\vec X,\vec Y)\|_n^2$. Taking expectations, one obtains that the empirical variance of $\erm$ is asymptotically bounded by the population variance plus $\nms$, which is certainly $o(1)$.  
\end{remark}
\subsubsection{Admissibility}
In the final part of this subsection, we study the admissibility of ERM in the random design.

We say that the ERM is \textit{weakly admissible} if there exists $\ft \in \cF$ such that the error of the ERM on such $\ft$ is minimax optimal up to an absolute constant, or equivalently the minimal error of ERM is at most the minimax rate\footnote{In the paper of \cite{kur2021minimal}, a sharp lower bound on the minimal error of ERM in the fixed design setting is proven.}. Note that if we place $\bar{f}_n$ in Corollary \ref{C:A} some  minimax optimal estimator immediately yields that ERM is weakly admissible:
     \begin{equation}\label{Eq:WeakAdmis}
              \inf_{\ft \in \cF} \E_{\cD}\int (\erm - \ft)^2d\PP^{(n)} \lesssim \cM(\cF,\PPf).
     \end{equation}
This definition in the random design is essential, as we may not assume that our estimator is $1$-Lipechitz in the covariates.  In order to state our result, we require the following additional technical assumption:
\begin{assumption}\label{A:Eval}
  The function class $\cF$ is compact with respect to $L_2(\PP)$, and for every $x \in \cX$, the evaluation functional $f \mapsto f(x)$ is continuous in the $L_2(\PP)$ norm when restricted to $\cF$. 
\end{assumption}
As we assumed that $\cF$ is closed in Assumption \ref{A:Convex}, it suffices that $\cF$ have finite $\eps$-entropy for every $\eps$ to ensure that $\cF$ is compact. We will use this regularity condition in order to apply a fixed-point theorem for continuous functions on a compact convex set in a Banach space. 

\begin{theorem}\label{T:RndA}
Under Assumptions \ref{A:Convex},\ref{A:Eval}, we have that
     \[
             \inf_{\ft \in \cF} \E_{\cD} \int (\erm - \ft)^2d\PP \lesssim \max\{ \cV(\erm,\cF,\PP), \cI_{L}(n)\}.
     \] 
\end{theorem}
\noindent{In particular, Theorem \ref{T:RndA} implies that when $\max\{\cV(\erm,\cF,\PP), \cI_{L}(n)\} \lesssim \cM(n,\cF,\PP)$, then ERM is weakly admissible. On the other hand, we conjecture that when $\cI_{L}(n) \gg \cM(n, \cF, \PP)$, ERM \emph{may not} even be weakly admissible. This would follow from the stronger conjecture that the bound of Theorem \ref{T:RndA} is optimal.}

 \begin{remark}
The assumption that the evaluation functional is continuous in $L_2(\PP)$ may seem restrictive. In fact, though, the proof of Theorem \ref{T:RndA} also goes through if there exists a stronger norm $\|\cdot\|'$ on $\cF$ than the $L^2(\PP)$ norm such that $\cF$ is compact and the evaluation functionals $f \mapsto f(x)$ are continuous with respect to the topology induced by $\|\cdot\|'$. Natural examples of such $\cF, \|\cdot\|'$ are Sobolev space. For simplicity, we have stated the theorem under Assumption \ref{A:Eval}.
\end{remark}

\subsection{On the landscape of ERM in the non-Donsker regime}\label{SS:Jagged}
Finally, we establish a counter-intuitive behavior of the landscape around $\erm$ for various non-parametric models that lie in the non-Donsker regime. For our purposes, the ``non-Donsker regime'' simply means that the model satisfies Assumption \ref{A:ND} below . The conditions in Assumption \ref{A:ND} may seem a bit technical at first glance, but they cover many well-studied non-parametric models that appear in the shape-constraints literature, including convex/bounded $\alpha$-H\"older regression in the suitable dimensions. 
% For example, in the case of convex Lipschitz regression \cite{seijo2011nonparametric} when $d \geq 5$.
\begin{assumption}\label{A:ND}
  The model $(\cF,\PP)$ satisfies the following:
   \begin{enumerate}
       \item $\cF$ is uniformly bounded by an absolute constant $\Gamma > 0$.
        % and furthermore $\log \cN(\eps,\cF,\PP)$ is finite for every $\eps \in (0,\Gamma)$ and $\liminf_{\eps \to 0}\frac{\log \cN(\eps,\cF,\PP)}{\log \cN(\eps/2,\cF,\PP)} > 0$. 
       \item The function  $\eps \mapsto \frac{\eps^2}{\log(\eps^{-1})}\cdot\log \cN(\eps,\cF,\PP)$ is decreasing in $\eps \in (0,\Gamma)$.
       \item The lower and upper isometry remainders satisfy: $\cI_{L}(n) = o(\nms)$ and $\cI_{U}(n) = O(\nms)$ .
   \end{enumerate}
\end{assumption}
% In order to avoid confusion, in this model $\cF$ is a fixed function class, and $\PPf$ is a sequence of fixed measures. This model is inspired from non-parametric models that appear in  and $\PPf$ are $n$-well separated grid points on $\cX$, Assumption \ref{A:ND} is valid. 

Now, we are ready to state our result:
\begin{theorem}\label{C:Jagged}
     Let $(\cF,\PP,\vec \xi)$ that satisfies Assumptions \ref{A:Convex},\ref{A:normal},\ref{A:ND} and set $\nm = \eps(n)$ as in \eqref{Eq:minimax} above. Then, there exists a sequence $C_{\cF,\PP}(n) = \omega(1)$ and a sequence of functions $\ft = \ft(n) \in \cF$, such that $\E_{\cD}\|\erm - \ft\|^2 \lesssim \nms$ (i.e., each $\ft$ is weakly admissible) and
    \begin{equation}\label{Eq:admsta}
    \crl*{f \in \cF: \int (f-\erm)^2 d\PP \lesssim \nms} \not\subset \cO_{C_{\cF,\PP}(n)\cdot \nms},
    \end{equation}
     with probability of at least $1-n^{-1}$.
\end{theorem}
\noindent{Theorem \ref{C:Jagged} says that for some target function,  $\erm$ displays counterintuitive behavior: on the one hand, $\erm$ estimates $\ft$ optimally, but on the other hand, for most $\vec{\xi}$ there exist functions which are very close to $\erm$ in $L_2(\PP)$, and yet far from being minimizers of the squared error.}

% \section{Discussion and Additional Remarks}\label{S:D}
% \paragraph{Variance reduction of ERM} It is statistical folklore that the main benefit of incorporating a regularization term, such as ridge penalties, into an ERM procedure is that such regularized estimators have reduced variance.  However, our findings seem to indicate that the ERM exhibits much lower conditional variance than variance of conditional expectation. This suggests that regularization primarily reduces the variance of the conditional expectation as opposed to the full variance error term.
% \subsection{Additional Remarks}
\section{Discussion} \label{ss:ronT}
In order to start this discussion, recall Theorem \ref{T:Rnd} above. Note that when $\max\{\cI_{L}(n),\cI_{U}(n)\} \gg \nms$, it implies that we have that $\eps_{V}^2 \gg \nms$, i.e., the upper bound is larger than the minimax rate. Therefore, this bound seems at first glance to be suboptimal.

To the best of our knowledge, all estimators that attain the minimax rate, such as aggregation and related algorithms (cf. \cite{yang2004aggregating}), depend on the marginal distribution $\PP$ of the covariates. In many cases, though, we do not know or have oracle access to the marginal distribution $\PP$, and the estimator only has access to $\PP_n$ and to $\cF$. It is natural to ask what is the minimax rate of ``distribution-unaware'' estimators that only depend on $\PP_n$ and the function class $\cF$, when the underlying distribution $\PP$ is allowed to vary over some family of distributions.

To this end, given some family of probability distributions $\cP$ on a domain $\cX$, consider the following measurement of optimality of an estimator:
$$\Delta_{(du)}(n,\cF,\cP) = \inf_{\bar f_n} \sup_{\QQ \in \cP} \frac{\cR(\bar f_n,\cF,\QQ)}{\cM(n,\cF,\QQ)}.$$
 We say that there exists an \emph{optimal} distribution unaware estimator over $(n,\cF, \cP)$ when $\Delta_{(du)}(n,\cF,\cP) = \Theta(1)$.

Unsurprisingly, suppose we do not place additional assumptions on $\cF$ and $\cP$ (beyond convexity). In that case, it may happen that $\Delta_{(du)}(n,\cF,\cP)=\omega(1)$ -- i.e. no single estimator attains the minimax error on every distribution $\QQ \in \cP$. In other words, a minimax optimal estimator for $\QQ$ must ``know'' $\QQ$. In fact, one may construct a set of probability distributions $\cP$ on a domain $\cX$ and a function class $\cF$ such that for any $\QQ \in \cP$, $\mathcal M(n,\cF, \QQ) = O(n^{-1})$ (the parametric rate), and for any estimator $\bar{f}_n$, one may find $\QQ \in \cP$ such that $\cR(\bar{f}_n,\cF,\QQ) = \Theta(1)$; and in particular $\Delta_{(du)}(n,\cF,\cP) = \Theta(n)$ (see Example \ref{Ex:One} below). 

It's also intuitively clear that the version space diameter, namely, 
\[
    \Psi(n,\PP):= \sup_{\ft \in \cF} \mathbb E_{\vec X}[\diam_{\PP}(\{f \in \cF:  \ft|_{\vec X} = f|_{\vec X}\})]
\]
should appear in the error of any ``distribution-unaware'' estimator in terms of $L_2(\PP)$ (though we do not know how to show this in complete generality). Clearly, for every model, $\Psi(n,\PP) \leq \cI_{L}(n)$. Therefore, it is not surprising that the bound of Theorem \ref{T:Rnd} includes the lower isometry remainder.  The upper isometry remainder $\cI_{U}(n)$, though, is not tightly connected to $\Psi(n,\PP)$. Nonetheless, we conjecture that it cannot be removed from the bounds of Theorem \ref{T:Rnd}. Specifically, we propose the following conjecture:

\begin{conjecture}\label{Conj:UI}
For every $n \geq 1$ there exists models  $\cF$ and a distribution $\PP$ in which their corresponding ERM satisfies
\[
    \cV(\erm,\cF,\PP) \asymp \eps_V^2.
\]
\end{conjecture}
This conjecture implies that the bound of Theorem \ref{T:Rnd} cannot be improved \textbf{without} additional assumptions. The intuition behind this conjecture is as follows: the ERM sees the geometry of 
\[
    \cF_n = \{(f(X_1),\ldots,f(X_n)): f \in \cF\}
\]
and perturbing the data points $X_1,\ldots,X_n$ in ``adversarial'' way by some small $\delta_1,\ldots,\delta_n \in \cX$ may change the geometry of $\cF_n$, and it will reduce the ``stability'' of $\erm$ in terms of $L_2(\PP)$. It mainly follows from the fact that $\erm$ may not be a Lipschitz function in the data $\vec X$ with respect to the $L_2(\PP)$-norm, in contrast to its Lipschitzness in the observations (i.e, with respect to the $L_2(\PPf)$-norm). 
% Only if the upper and lower isometry constants are small, say of the order $\nms$, then the geometry of $\cF_n$ not depend too much on $\vec X$, up to $\nms$, in which case we expect the variance to be bounded by $\nms$.

Our confidence that this is the correct explanation for the appearance of $\cI_{U}(n)$, rather than some other phenomenon, derives from Theorem \ref{T:RndLowerIso},  which precisely states that the expected conditional variance of $\erm$ is upper bounded by the lower isometry radius, i.e.
\begin{equation}\label{Eq:tv1}
    \E_{\vec X}V(\erm|\vec X) \lesssim \cI_{L}(n).
\end{equation}

Therefore, if Conjecture \ref{Conj:UI} is correct and there are models in which $\cV(\erm,\cF,\PP) \gtrsim \eps_U^2 \gg \cI_L(n)$, this must be due to the variance of conditional expectations:
\begin{equation}\label{Eq:tv2}
V\left(\E_{\vec \xi}\left[\erm|\vec X\right]\right)\gtrsim \eps_{U}^2,
\end{equation}
and $V\left(\E_{\vec \xi}\left[\erm|\vec X\right]\right)$ is precisely the error term which captures how the geometry of $\cF$ varies under different realizations. 
 % \subsubsection{Remarks on \S \ref{sec:random}}

% \begin{remark}\label{Rem:Tech}
%     When $\nms \lesssim \log(n)/n$, the proof of Theorem \ref{T:Rnd} shows that there exists an $f_c \in $ $\erm$ such that $\|f - f_c\| \le C_1 \eps_V$ outside an event $\cE$ of probability $\exp(-C_2 n\nms)$ for absolute constants $C_1, C_2 > 0$. For instance, in the parametric case $\nms \asymp n^{-1}$, this bound only yields that $\Pr(\cE) \geq 1-c_1$, for some $c_1 \in (0,1)$. In particular, we cannot conclude that $\cV(\erm,\cF,\PP) \lesssim \nms$, because we have no control over the behavior of $\erm$ on $\cE^{c}$. 
% \end{remark}
% \subsubsection{Random Design: Isoperimetry Approach}% \subsubsection{Remarks on \S \ref{S:admis}}

\section{Proof sketches}\label{S:SP}
In this section, we sketch proofs of less technical results to give the reader a flavor of our methods. The full proofs are given in the next section. For the proofs we introduce some additional notations. For $m \in \mathbb{N}$, we set $[m]:= \{1,\ldots,m\}$. The inner products in $L_2(\PP), L_2(\PPf)$ are denoted by $\langle \cdot, \cdot \rangle, \langle \cdot, \cdot\rangle_n$, respectively.  In the fixed design proofs, with some ambiguity of notation, $\erm,\ft$ are observations vectors in $\R^n$.

\subsection{Sketch of proof of Theorem \ref{T:Fix}}\label{SP:Fix}
Here, we sketch a simple proof of a weaker version of our result, namely $V(\erm) \lesssim \cM(\cF, \PPf)$, under the stronger assumption that 
\begin{equation}\label{Eq:SimplifiedModel}
    \cM(\cF,\PPf) \asymp \nms,
\end{equation}
where $\nm$ solves $\log \cN(\eps,\cF,\PPf) \asymp n\eps^2$ (this holds under reasonable assumptions on $\cF$, but can be dispensed with; see Lemma \ref{Lem:Minimax} for the exact characterization). In \S \ref{sss:TfixSimple}, we fill in the details of this sketch, and in \S \ref{sss:TfixFull}, we give the full proof of Theorem \ref{T:Fix}. 

The proof uses the probabilistic method \citep{alon2016probabilistic}. Let $f_1,\ldots,f_N$ be centers of a minimal $\epsilon_*$-cover of $\cF$ with respect to $L_2(\PP^{(n)})$, per Definition~\ref{Def:Covering}. First, since for any $i\in[N]$, the map $\vec{\xi}\mapsto \sqrt{n} \cdot \|\erm(\vec{\xi}) - f_i\|_n$ is $1$-Lipschitz, \eqref{Eq:CCP} and a union bound ensure that with probability at least $1-\frac{1}{2N}$, for all $i\in[N]$, 
    $$
     \E_{\vec\xi}\|\erm - f_{i}\|_{n} - \|\erm - f_{i}\|_{n} \lesssim \sqrt{\frac{\log N}{n}}.
    $$
    On the other hand, by the pigeonhole principle, there exists at least one $i^*\in[N]$ such that with probability at least $1/N$, 
    $\|\erm - f_{i^*}\|_{n} \leq \eps_*$. Hence, there exists at least one realization of $\vec{\xi} \in \R^{n}$ for which both bounds hold, and thus,  \textit{deterministically},
    $$\E_{\vec\xi}\|\erm - f_{i^*}\|_{n} \lesssim \eps_* + \sqrt{\frac{\log N}{n}} \lesssim \eps_*$$
    where we used the balancing equation \eqref{Eq:minimax}. Another application of \eqref{Eq:CCP} and integration of tails yields
    $$
    V(\erm) = \E_{\cD}\|\erm - \E_{\cD} \erm\|_{n}^2 \leq \E_{\cD}\|\erm - f_{i^*}\|_{n}^2 \lesssim \eps_*^2,
    $$
    implying that the variance of ERM is minimax optimal. 
    
\subsection{Sketch of proof of Theorem \ref{T:RndLowerIso}}\label{SP:RndLowerIso}
As is well-known, the Lipschitz concentration condition \eqref{Eq:CCP} is equivalent to an isoperimetric phenomenon: for any set $A \subset \mathbb R^n$ with $\Pr_{\vec \xi}(A) \geq 1/2$, its $t$-neighborhood $A_t = \{\vec \xi \in \R^n: \inf_{x\in A}\|x-\vec \xi\|_{n} \leq t \}$ satisfies 
\begin{equation}
\Pr_{\vec \xi}(A_t) \geq 1- 2\exp(-nt^2/2). 
\end{equation}
One sees quickly that this implies that if $A$ has measure at least $2 \exp(-nt^2/2)$, then $A_{2t}$ has measure $1 - 2 \exp(-nt^2/2)$.

Let $\mathcal E$ be the event of Definition \ref{D:Brack}. As in \S \ref{SP:Fix} above, one obtains via the pigeonhole principle and the definition of $\nm$ that there exists some $f_c \in \cF$ such that
\[
    \Pr_{\vec \xi}(\underbrace{\{\erm \in B(f_c,\nm)\} \cap \cE }_{A}|\vec X) \geq \frac{\Pr_{\vec \xi}(\cE)}{\cN(\nm,\cF,\PP)} \geq \exp(-C_3 n\nms).
\]
By isoperimetry, $\Pr_{\vec \xi}(A_{2t}) \ge 1 - 2\exp(-n t^2/2)$, where $t = M\nm/2$ and $M$ is chosen such that $(M/2)^2 \ge 2 C_3$; this fixes the value of the absolute constant $M$ used in \eqref{eq:a_brack}.

Applying \eqref{eq:a_brack} yields that if $\vec \xi \in A \subset \cE$ and $\|\vec \xi' - \vec \xi\|_n \le M \nm = 2t$, $\|\erm(\vec \xi) - \erm(\vec \xi')\| \le \rho_S(\vec X, \ft)$ and so $\|\erm(\vec \xi') - f_c\| \le \nm + \rho_{S}(\vec X,\ft)$. This implies
\[
    \Pr_{\vec \xi}(\{\erm \in B(f_c, \nm + \rho_{S}(\vec X,\ft))\}|\vec X) \geq \Pr_{\vec \xi}(A_{2t}|\vec X) \geq 1-2\exp(-n t^2/2),
\]
which implies via conditional expectation that
$
    V(\erm|\vec X) \lesssim \max\{\rho_{S}(\vec X),\nms\}
$, as desired (where we used that $\nms \gtrsim \log(n)/n$ (see Lemma \ref{Lem:DumTech} below), and therefore $\exp(-nt^2) = O(\nms)$).
% We conclute this part wtith  that
\subsection{Sketch of Proof of Theorem \ref{C:A}}\label{ss:pChat}
Here, we prove this result when $\cF$ is conve and compact with respect to $L_2(\PPf)$. We denote by
\[
    \cF_n:= \{(f(x_1),\ldots,f(x_n)): f \in \cF\} \subset \R^n.
\]
For completeness, we provide a proof without this assumption in \S \ref{ss:rdchat} below.
%We remark that it is very similar to the argument that we present here and does not provide any new insights, and therefore omitted. 
Consider the map $F: \cF_n \to \cF_n$ defined via
\[
    \ft \to \E_{\vec \xi}\erm,
\]
i.e., $\ft$ maps to the expectation of the $\erm$ when the underlying function is $\ft \in \cF$. One easily verifies that $F$ is continuous, since projection to a convex set is a $1$-Lipschitz function. In addition, the convexity of $\cF_n$ implies that $F(\ft) \in \cF_n$ for all $\ft \in \cF_n$. Thus $F$ is a continuous map from the compact convex set  $\cF_n$ to itself, so by the Brouwer fixed point theorem, there exists an $\ft \in \cF_n$ such that
\[
   \ft = \E_{\vec \xi} \erm,
\]
i.e. on this $\ft$, it holds that  $B^2(\erm) = 0$. Let $\Ef^2 = \E_{\vec \xi} \|\erm - \ft\|_n^2 = V(\erm)$, now recall that $\erm: \R^n \to \R^n$ is the projection to $\cF_n$, which is a contraction in its  input (w.r.t. to $\|\cdot\|_n$). Then, we know that for any $\vec \xi$ and 
$f \in \cH_{*}$, it holds that 
\begin{align*}\|\erm(f + \vec \xi) - f\|_n^2 &\le 3(\|\erm(f + \vec \xi) - \erm(\ft +  \vec \xi)\|_n^2 + \|\erm(\ft + \vec \xi) - \ft\|_n^2 + \|\ft - f\|_n^2) \\
 &\le 3(\|\erm(\ft + \vec \xi) - \ft\|_n^2 + 8 \Ef^2),
\end{align*}
and taking the expectation over $\vec \xi$ we see that $\E_{\vec \xi} \|\erm(f + \vec \xi) - f\|_n^2$, the squared error for any ERM when the underlying function is $f \in \cH_{*}$, is at most $27 \cdot \Ef^2$. Now, let $\widehat{h}_n: \mathbb R^n \to \R^n$ be any estimator. By Theorem \ref{T:Fix}, we have 
$$\Ef^2 \lesssim  \max_{f \in \cH_{*}} \|\widehat{h}_n(f + \vec \xi) - f\|_n^2.$$
Picking $f \in \cH_{*}$ which maximizes the error of $\widehat{h}_n$, we have that the squared error of $\erm$ on $f$ is upper-bounded by $c \cdot \Ef^2$ and the squared error of $\widehat{h}_n$ on $f$ is lower-bounded by $c_1 \cdot  \Ef^2$, which is precisely what we want.

\paragraph{Acknowledgements:}
This work was supported by the Simons Foundation through Award 814639 for the Collaboration on the Theoretical Foundations of Deep Learning, the ERC under the European Union’s Horizon 2020 research and innovation programme (grant agreement No 770127), and the NSF (awards DMS-2031883, DMS-1953181). Part of this work was carried out while the first two authors were in residence at the Institute for Computational and Experimental Research in Mathematics in Providence, RI, during the Harmonic Analysis and Convexity program; this residency was supported by the NSF (grant DMS-1929284). Finally, the first two authors also wish to acknowledge Shiri Artstein-Avidan for introducing them to each other. We also acknowledge Reese Pathak and Nikita Zhivotovskiy for their helpful suggestions for this paper. 
\bibliography{Bib}
\section{Proofs}\label{S:pfs}
 We begin with additional notation: Given $x_1,\ldots, x_n$ and $h : \cX \to \R$, we denote
\[
\Xf{h} = n^{-1}\sum_{i=1}^{n}\xi_ih(x_i).
\] 
For $g \in \cF$, we set $B_n(g,t):= \{f\in\cF:\|f-g\|_{n} \leq t\}$, and $B(g,t):= \{ f\in\cF:\|f-g\| \leq t\}$. 
Throughout the proof, we denote by $c,c_1,c_2,\ldots \in (0,1)$, an $C,C_1,\ldots \geq 0$ absolute constants (not depending on $\cF$ or on $n$) that may change from line to line.  

\subsection{Proof of Theorem \ref{T:Fix}}\label{SS:Fix}

In \S \ref{sss:TfixSimple}, we fill in the details the proof sketch that was given above under the additional assumption of \eqref{Eq:SimplifiedModel}, and in \S \ref{sss:TfixFull}, we give the full proof without additional assumptions. We remark that our proof holds for any noise that satisfies the LCP property \eqref{Eq:CCP} defined above. 

 \subsubsection{Proof of Theorem \ref{T:Fix} under \eqref{Eq:SimplifiedModel}}\label{sss:TfixSimple} First, we show that for all $t \geq 0$ and for any fixed $f \in \cF$, the following holds:
\begin{equation}\label{Eq:concetProj}
    \Pr_{\vec \xi}\crl*{ \left|\|\erm - f\|_n - \E_{\vec \xi}\|\erm - f\|_n \right| \geq t } \leq 2\exp(-c_Lnt^2),
\end{equation}
Indeed, this will follow immediately from the LCP condition \eqref{Eq:CCP} with $c_{L} = n^{-0.5}$ if we prove that  $h(\vec \xi) = \|\erm(\vec \xi) - f^{*}\|_{n}$ is a $n^{-0.5}$-Lipschitz function. 

To prove this claim, observe that $\erm(\vec \xi)$ is the projection of $\vec Y = \ft + \vec \xi$ onto the convex set
 \[
    \cF_n:=\{(f(x_1),\ldots,f(x_n)): f \in \cF\} \subset \R^n.
\]

Therefore, we obtain
\begin{align*}
  |h(\vec\xi_1)-h(\vec\xi_2)| = |\|\erm(\vec\xi_1) - f^{*}\|_{n} - \|\erm(\vec\xi_2) - f^{*}\|_{n}| &\leq \|\erm(\vec\xi_1) - \erm(\vec\xi_2)\|_{n} \\&\leq \|\vec\xi_1-\vec\xi_2\|_{n} = n^{-1/2}\|\vec\xi_1-\vec\xi_2\|_{2},
\end{align*}
where we have used the fact that the projection to a convex set is a contracting operator. This concludes the proof of \eqref{Eq:concetProj}.

Next, fix $\epsilon > 0$ (to be chosen later), let $\cN(\eps):= \cN(\eps,\cF,\PPf)$, and let $A = \{f_1,\ldots,f_{\cN(\eps)}\}$ be a minimal $\eps$-net of $\cF$. By the pigeonhole principle, there exists at least one element $f_{\eps} \in A$ such that
\begin{equation}\label{Eq:Pigeon}
    \Pr( \|\erm - f_{\eps}\|_{n} \leq \eps) \geq 1/\cN(\eps).
\end{equation}

Also, setting $f = f_\eps$ in \eqref{Eq:concetProj} we have
\begin{equation}\label{Eq:concetNet}
    \Pr\left( |\|\erm - f_{\eps}\|_{n}  - \E_{\vec \xi}\|\erm - f_{\eps}\|_{n})| \geq t\right) \leq 2\exp(-c_{L}nt^2).
\end{equation}
Taking $t = \log(4)\sqrt{\frac{\log \cN(\eps)}{c_{L}n}}$ in \eqref{Eq:concetNet} yields 
\begin{equation}\label{Eq:LastFix}
    \Pr\lp \big|\|\erm - f_{\eps}\|_{n}  - \E_{\vec \xi}\|\erm - f_{\eps}\|_{n}\big| \geq \log(4)\sqrt{\frac{\log \cN(\eps)}{c_Ln}} \rp \leq \frac{1}{2\cN(\eps)}.
\end{equation}
Combining \eqref{Eq:Pigeon} and \eqref{Eq:LastFix} via the union bound we obtain
\[
  \Pr\left( \|\erm - f_{\eps}\|_{n} \leq \eps , \left|\|\erm - f_{\eps}\|_{n}  - \E_{\vec \xi}\|\erm - f_{\eps}\|_{n}\right| \leq \log(4)\sqrt{\frac{\log \cN(\eps)}{c_Ln}}\right) \geq \frac{1}{2|\cN(\eps)|} > 0
\]
Since the event of the last equation holds with positive probability, we must have
\[
     \E_{\vec \xi}\|\erm - f_\eps\|_{n} \leq \eps  +\log(4) \sqrt{\frac{\log \cN(\eps)}{c_{L}n}}.
\]
To optimize the RHS over $\epsilon$, we take $\eps$ such that $\eps = \sqrt{\log \cN(\eps)/(c_L n)}$ --- i.e., $\eps = \nm$ --- and get
 \[
    \E_{\vec \xi}\|\erm - f_\eps\|_{n} \leq C\nm/\sqrt{c_{L}}.
 \]
Substituting in \eqref{Eq:concetNet} and taking $t = \nm$, we obtain
\[
    \Pr(|\|\erm - f_\eps \|_{n} \geq C\nm/\sqrt{c_L}) \leq 2\exp(-c n \nm^2).
\]
This easily implies that $\E[\|\erm - f_\eps \|_{n}^2] \le C_1\nms/c_{L}$, and therefore also 
\[
\E[\|\erm - \E\erm \|_{n}] \le (\E[\|\erm - \E\erm \|_{n}^2])^{1/2} \le(\E[\|\erm - f_\eps \|_{n}^2])^{1/2} \le C_2\nm/\sqrt{c_L}.
\]
Applying \eqref{Eq:concetProj} once again, now with $f = \E\erm$, we obtain
\begin{equation}
\Pr(\|\erm - \E \erm\|_{n}^2 \geq C_2\nms/c_{L}) \leq 2\exp(-c n \nms).
\end{equation}

\subsubsection{Full proof of Theorem \ref{T:Fix}}\label{sss:TfixFull}
First note that for any class $\cH$, we have $\mathcal M(\cH, \PPf) \le \diam_{\PPf}(\cH)^2$ (consider a constant estimator),and applying this to $\cH_*$ yields $\mathcal M(\cH_*, \PPf) \le 4 V(\erm)$. Thus we need only prove the nontrivial inequality $\mathcal M(\cH_*, \PPf) \gtrsim V(\erm)$.

We will consider two cases: First, when $\cM(\cH,\PPf) \leq Sn^{-1}$ for sufficiently large $S \geq 0$ (i.e. the parametric case), the result follows from classical theory. The remaining case will be handled in similar fashion as in \S \ref{sss:TfixSimple} above, but with a more careful analysis.
% We denote by $\nm$ to be the solution of this equation, and note that $\cM(\cF,\PPf) \asymp \nms$.  

% We will also need a simple corollary from the recent paper \cite{kur2021minimal}. 
% \begin{lemma}[\cite{kur2021minimal}]\label{Lem:KurMini}
% Let $n \geq 1$ and $(\cF,\PPf)$ that satisfies Assumptions of \ref{A:Convex}-\ref{A:Diameter}. Then, the following holds:
% \[
%     \inf_{f \in \cF} \E \|\erm - \ft\|_{n}^2  \geq \frac{1}{32 n},
% \]
% \end{lemma}
% Let $S > 0$ be a (large) absolute constant to be specified later. We will consider two cases:
\paragraph{Case I: $ V(\erm) \leq Sn^{-1}$.} Certainly, there exists $g \in \cF$ such that $\|g - \E \erm\|_n^2 \ge V(\erm)$, which implies, by the convexity of $\cF$ that there exists $h \in \cH_*$ with $\sqrt{V(\erm)} \le \|h - \E \erm\|_n \le 2\sqrt{S/n}$. Applying the two-point method to $h$ and $\mathbb E\erm$ (see e.g., \citep[Example 15.4]{wainwright2019high}), one sees easily that the minimax rate of $\cH_*$ is $\Omega(V(\erm))$.

\paragraph{Case II: $V(\erm) \geq Sn^{-1}$.} % Here we only prove Theorem \ref{T:UB}, as the proof will reveal, almost identical arguments imply Theorem \ref{T:EVC}. For completeness, the proof of Theorem \ref{T:EVC} appears in \S \ref{S:Loose} below. 
To treat this case, we use the following characterization of the minimax rate in the fixed design setting (cf. \citep[Thm 2.11]{neykov2022minimax}):

\begin{lemma}\label{Lem:Minimax}
Under Assumptions \ref{A:Convex},\ref{A:normal}, $\cM(\cH,\PPf) \asymp \nms$, where $\nm$ solves the equation
   \begin{equation}\label{Eq:MinimaxF}
    \log \cN^{\text{loc}}(\eps,\cH,\PPf) \asymp n\eps^2,
\end{equation}
where $\cN^{\text{loc}}(\eps,\cF,\PPf) = \sup_{f \in \cF}\cN(\eps/4,\{ h \in \cH: \|h - f\|_n \leq \eps\},\PPf)$.
\end{lemma}

By the lemma, it suffices to show that 
$$\log \mathcal N^{loc}(2\sqrt{V(\erm)}, \cH_*, \PPf) \gtrsim n V(\erm),$$
as this will imply that $2\sqrt{V(\erm)} \le \nm$ and hence $V(\erm) \lesssim \mathcal M(\cH_*, \PPf)$. Noting that 
$$\{h \in \cH_*: \|h - \E \erm\|_n \leq 2\sqrt{V(\erm)}\} = \cH_*$$ 
we have 
$\cN^{\text{loc}}(\sqrt{V(\erm)}/2,\cF,\PPf) \ge \cN(\sqrt{V(\erm)}/2, \cH_*, \PPf)$ and hence 
it suffices to show that 
\begin{equation}\label{Eq:LYBA}
\log \cN(\sqrt{V(\erm)}/2, \cH_*, \PPf) \ge c_1 n \cdot V(\erm)
\end{equation}
for an appropriate $c_1 > 0$ to be chosen later.

Suppose to the contrary that
$\log N \le c_1 n \cdot V(\erm)$, where $N = \cN(\sqrt{V(\erm)}/2, \cH_*, \PPf)$.

We consider the distribution of $\erm$ when the true function is $\ft$. First, note that as $\E \|\erm - \E \erm\|^2 = V(\erm)$, we have that
\begin{equation*}
    \Pr(\erm \in \cH_{*}) = \Pr(\erm \in B_n(\E_{\cD}\erm, 2\sqrt{V(\erm)})) = 1 - \Pr(\|\erm - \E_{\cD}\erm\|_{n}^2 \geq 4V(\erm)) \geq 3/4
\end{equation*}
by Chebyshev's inequality. Let $A = \{f_1, \ldots, f_{\cN}\}$ be a minimal $\sqrt{V(\erm)}/2$-net in $\cH_{*}$; by the pigeonhole principle, there exists at least one element $g \in A$ such that 
\begin{equation}\label{Eq:loc_erm_pigA}
\Pr(\|\erm - g\|_{n} \le \sqrt{V(\erm)}/2) \ge \frac{3}{4N} \ge  3\exp(-c_1 nV(\erm))/4.
\end{equation}
Next, we apply \eqref{Eq:concetProj} with $f = g$ and $t = \sqrt{V(\erm)}/6$, to obtain 
\begin{equation}\label{Eq:loc_erm_concA}
\Pr_{\vec \xi} \left(|\|\erm - g\|_n - \E_{\vec \xi}\|\erm - g\|_n| \leq \sqrt{V(\erm)}/6\right) \geq 1-2\exp(-nV(\erm)/18).
\end{equation}
Recalling that we are in the case $V(\erm) \ge S n^{-1}$, by choosing $c_1 > 0$ small enough and $S > 0$ large enough we can ensure that $\exp(n V(\erm) (1/18 - c_1)) > 8/3$, or equivalently
\begin{equation} \label{Eq:pig_balanceA}
\frac{3}{4}\exp(-c_1 nV(\erm)) -  2\exp(-n V(\erm)/18) > 0.
\end{equation}
Combining \eqref{Eq:loc_erm_pigA}, \eqref{Eq:loc_erm_concA}, and \eqref{Eq:pig_balanceA} yields
$$\Pr(\|\erm - g\|_n \le \sqrt{V(\erm)}/2) + \Pr \left(\left|\|\erm - g\|_n - \E_{\vec \xi}\|\erm - g\|_n \right| < \sqrt{V(\erm)}/6\right) > 1,$$
so the two events \[
\left\{\left|\|\erm - g\|_n - \E_{\vec \xi}\|\erm - g\|_n\right| < \sqrt{V(\erm)}/6\right\}, \{\|\erm - g\|_n \le \sqrt{V(\erm)}/2\}
\] have nonempty intersection, which implies that $\E_{\cD}\|\erm - g\|_n < 2\sqrt{V(\erm)}/3$. 

Let $h(\vec \xi) = \|\erm - g\|_n$. We have 
$\E h^2 = (\E h)^2 + \E(h - \E h)^2 < 4V(\erm)/9$. As $h$ is $\frac{1}{\sqrt n}$-Lipschitz, the LCP implies that $h$ is $\frac{1}{\sqrt n}$-subgaussian. Thus $h - \E h$ is a centered $\frac{1}{\sqrt n}$-subgaussian random variable, so $\E(h - \E h)^2 \le \frac{2}{n}$ \citep[Proposition 2.5.2]{vershynin2018high}, and hence
\begin{equation*}
    \E_{\cD} \|\erm - g\|^2_n <  \frac{4}{9}V(\erm) + \frac{2}{n}.
\end{equation*}
Again recalling that $V(\erm) > S n^{-1}$, by taking $S$ large enough we can ensure that $\E_{\vec\xi} \|\erm - \E \erm \|^2_n < V(\erm)$, which contradicts the definition of $V(\erm)$.

It remains to prove the last statement of the theorem, namely that $\sup_{f \in \cO_{\delta_n}} \|f - \E\erm\|_n^2 \asymp \cM(\cH_*, \PPf))$ with high probability. We have seen that $\E\|\erm - \E\erm\|_n^2 \lesssim \cM(\cH_*, \PPf)$, so $\E\|\erm - \E \erm\|_n \le C \sqrt{\cM(\cH_*, \PPf)}$. Applying \eqref{Eq:concetProj} once again with $t = \sqrt{\cM(\cH_*,\PPf)}$, we have 
\begin{equation}\label{Eq:erm_var}
\P(\|\erm - \E \erm\|_n^2 \ge C \cM(\cH_*, \PPf)) \le 2\exp(-cn \cM(\cH_*, \PPf)).
\end{equation}

Condition on the high-probability event of \eqref{Eq:erm_var} above, and consider some $f \in  \cO_{\delta_n}$. Since
\[\|f - \E\erm\|_n^2 \le 2(\|f - \erm\|_n^2 + \|\erm - \E \erm\|_n^2),\]
to obtain the theorem it suffices to show that for any $f \in \cO_{\delta_n}$, we have
\[
\|f - \erm\|_{n}^2 \le \delta_n^2
\]
deterministically.

This is a matter of elementary convex geometry: we know that $\erm$ is the closest point in the convex set $\mathcal F_n$ to the point $\vec Y$, which implies that the ball $B = B(\vec Y, \|\erm - \vec Y\|_n)$ is tangent to $\mathcal F_n$ at $\erm$. This implies that $\mathcal F_n$ is contained within the positive half-space $H^+$ defined by the supporting hyperplane of $B$ at $\erm$, i.e.,
\begin{equation}\label{Eq:erm_hyp}
\mathcal F_n \subset H^+ = \{f: \langle \erm - \vec Y, f - \vec Y\rangle \ge \|\erm - \vec Y\|^2\}.
\end{equation}
We now compute:
\begin{align*}
\|f - \vec Y\|_n^2 = \|f - \erm\|_n^2 + \|\erm - \vec Y\|_n^2 + 2 \langle \erm - \vec Y, f - \erm\rangle_n.
\end{align*}
Since $f \in \mathcal F_n$, \eqref{Eq:erm_hyp} implies that $\langle f - \vec Y, \erm - \vec Y\rangle_n \ge \langle \erm - \vec Y, \erm - \vec Y\rangle_n$, or equivalently, $\langle f - \erm, \erm - \vec Y\rangle_n \ge 0$. Hence we obtain
\[
\|f - \erm\|_n^2 \le \|f - \vec Y\|_n^2 - \|\erm - \vec Y\|_n^2,
\]
but the RHS is at most $\delta_n$ by the definition of $\cO_{\delta_n}$. This concludes the proof. 

% \paragraph{Concluding Theorem \ref{T:Fix}:}
\subsection{Proof of Proposition \ref{T:FixLower}}
By the definition of the minimax risk, there exists some $\ft \in \cF$ with risk at least $\delta^2:= \cM(\cF,\PPf)$. By translating $\mathcal F$, we may assume $\ft = 0$ without loss of generality, so that $\E_{\vec \xi}[\|\erm\|_n^2] \gtrsim \delta^2$. 

Write $\erm(\vec \xi)$ for the ERM computed when the target function is $\ft = 0$ and the noise is $\vec \xi$, namely, the projection of $\vec \xi$ onto $\mathcal F$. We wish to show that $\E_{\vec \xi}[\|\widehat{f}_n - \E \erm\|_n^2] \gtrsim \delta^4$.

The fact that $\erm(\vec \xi)$ is the projection of the observation vector $\vec \xi$ on the  convex set $\cF$ implies, by convexity, that $\langle f - \erm, \erm - \vec \xi\rangle_n \ge 0$ for any $f \in \cF$ (see \S\ref{SS:Fix} for the easy argument). Substituting $f = \ft = 0$ and rearranging immediately yields that for any $\vec \xi$,
$$\langle \erm(\vec \xi), \vec \xi\rangle_n \ge \|\erm(\vec \xi)\|_n^2.$$
Write $f_e = \mathbb E_{\vec \xi} \widehat{f}_n(\vec \xi)$. Since $\mathbb E_{\vec \xi} \vec \xi = 0$, we may take expectations and insert $\widehat{f}_e$ to obtain
$$\E_{\vec \xi} \langle \erm(\vec \xi) - f_e, \vec \xi\rangle_n \ge \E_{\vec \xi} \|\widehat{f}_n(\vec \xi)\|_n^2 \ge \delta^2.$$
Applying Cauchy-Schwarz, we obtain 
$$\E_{\vec \xi}\|\erm(\vec \xi) - f_e\|^2_n \cdot \E_{\vec \xi}\|\vec \xi\|^2_n \ge \delta^4, 
$$
and because the noise is isotropic we immediately obtain 
$$V(\erm) = \E_{\vec \xi}[\|\widehat{f}_n(\vec \xi) - f_e\|^2_n] \ge \delta^4,$$
as desired.
\subsection{Proof of Theorem \ref{T:Rnd}}
\paragraph{Preliminaries}
The main tool we use from the theory of empirical processes is Talagrand's inequality \cite[Theorem 2.6]{koltchinskii2011oracle}:
\begin{lemma}\label{Lem:Talagrand}
Let $\cH$ be a class of functions on a domain $\cZ$ all of which are uniformly bounded by $M$. Let $Z_1,\ldots,Z_n \underset{i.i.d.}\sim \PP$. Then, there exist universal constants $ C,c > 0 $ such that
\begin{align*}
\Pr( |\|\cH\|_{n}  - \E \|\cH\|_{n}| \geq t) \leq C\exp\left(-\frac{cnt}{M} \log\left(1 + \frac{t}{\E \|\cH^2\|_{n}}\right)\right),
\end{align*}
where $
   \|\cH\|_{n}:= \sup_{h \in \cH} n^{-1} \sum_{i=1}^{n} h(Z_i),
$ 
and $\|\cH^2\|_{n} = \sup_{h \in \cH} n^{-1} \sum_{i=1}^{n} h(Z_i)^2$.
\end{lemma}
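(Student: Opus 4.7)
The plan is to apply the entropy method of Ledoux, as refined by Massart, Rio and Bousquet --- the standard modern route to Bennett/Bernstein-type concentration for suprema of empirical processes. The stated bound is essentially Bousquet's form of Talagrand's inequality, as presented in the cited reference \cite{koltchinskii2011oracle}, so in the paper the cleanest path is to quote it from there; below I sketch the underlying argument. A union bound reduces the two-sided statement to the upper tail of $Z := n\|\cH\|_n - n\E\|\cH\|_n$, a centered symmetric function of the i.i.d. sample $(Z_1,\ldots,Z_n)$ with the property that changing a single coordinate perturbs $Z$ by at most $2M$.

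The core step is to bound the cumulant generating function $\phi(\lambda) := \log \E e^{\lambda Z}$ for $0 < \lambda < c/M$ via entropy tensorization. Writing $\mathrm{Ent}(Y) = \E[Y \log Y] - \E Y \log \E Y$, subadditivity of entropy for product measures gives
$$\mathrm{Ent}(e^{\lambda Z}) \le \sum_{i=1}^n \E\,\mathrm{Ent}^{(i)}(e^{\lambda Z}),$$
where $\mathrm{Ent}^{(i)}$ conditions on all coordinates except the $i$-th. The key lemma (Massart, refined by Rio and Bousquet) exploits a measurable selection $h^\star$ of a supremum-achieving element of $\cH$ to show, after truncation via $|h^\star| \le M$, that
$$\mathrm{Ent}^{(i)}(e^{\lambda Z}) \;\lesssim\; \lambda^2 \,\E^{(i)}\!\bigl[h^\star(Z_i)^2 \, e^{\lambda Z}\bigr]$$
up to lower-order terms of order $\lambda M$. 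Summing and using the pointwise bound $\sum_{i=1}^n h^\star(Z_i)^2 \le n\|\cH^2\|_n$ produces a Bernstein-type differential inequality for $\phi$; Herbst integration then yields $\phi(\lambda) \lesssim n \lambda^2 \E\|\cH^2\|_n / (1 - c\lambda M)$ on the range $0 < \lambda < c/M$. Finally, Markov applied to $e^{\lambda Z}$ and optimization in $\lambda$ (i.e., the Legendre dual of this Bernstein cumulant bound) produces the stated Bennett tail with the $\log(1 + t/\E\|\cH^2\|_n)$ factor.

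The main obstacle --- and the reason the sharp form of this result took several papers to settle --- is the conditional entropy bound above: a naive pointwise Lipschitz argument would replace $h^\star(Z_i)^2$ by the envelope $M^2$, producing only a Hoeffding-type (i.e., sub-Gaussian) exponent with variance proxy $M^2$. The measurable-selection device of Massart is precisely what extracts the ``weak variance'' $\E\|\cH^2\|_n$, which can be far smaller than the worst-case $M^2$ when the class has small $L_2$-radius, as is the case in all the applications in this paper. The lower tail $\Pr(-Z \ge t)$ is handled by applying the same argument to $-Z$; the corresponding conditional entropy estimate is in fact slightly easier, since one need only compare the current supremum to any fixed near-maximizer.
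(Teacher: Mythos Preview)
The paper does not prove this lemma at all: it is simply quoted as \cite[Theorem 2.6]{koltchinskii2011oracle} and used as a black box in the proof of Theorem~\ref{T:Rnd}. Your sketch of the Ledoux--Massart--Rio--Bousquet entropy method is a faithful outline of the standard modern proof, and you correctly identify the crucial point (the measurable-selection trick that extracts the weak variance $\E\|\cH^2\|_n$ rather than the envelope $M^2$), but for the purposes of this paper a citation suffices.
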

\begin{lemma}\label{Lem:DumTech}
    Under Assumptions \ref{A:Convex},\ref{A:Diameter},\ref{A:boudn}, the following holds:
    \[
        \nms \gtrsim \log(n)/n,
    \]
    where $\nms$ is defined in \eqref{Eq:minimax} above.
\end{lemma}
\begin{proof}[Proof of Lemma \ref{Lem:DumTech}]
Note that by Assumptions \ref{A:Convex},\ref{A:Diameter}, we have that
\[
    \log \cN(\eps,\cF,\PP) \gtrsim \log(\eps^{-1}),
\]
Therefore, we obtain that $\nm$ is greater or equal to the stationary point of the following equation:
\[
    \log(\eps^{-1})/n \asymp \eps^2,
\]
so $\nm \gtrsim \sqrt{\log n/n}$.
\end{proof}

\paragraph{Proof of Theorem \ref{T:Rnd}:}
We abbreviate $\cI_{L}:= \cI_{L}(n)$, $\cI_{U}:= \cI_{U}(n)$ and note that by the last lemma, we may assume that $\nms > C\log(n)/n$ for sufficiently large $C \geq 0$. For every fixed $\vec X, \vec \xi$, the function $\Lhat: \cF \to \R$ defined by
\begin{equation}\label{E:Erm}
    \Lhat(f) := \| \vec Y - f \|_n^2 - \|\vec \xi\|_n^2 = -2\Xf{f-f^{*}} + \| f- f^{*}\|_{n}^2,
\end{equation}
satisfies $\erm = \argmin_{f \in \cF} \Lhat(f)$. (Of course, $\Lhat(f)$ is just the empirical loss of $f$, up to subtracting a constant.) Note that $\Lhat(\ft) = 0$, so $\Lhat(\erm)$ must be non-positive.

Let $\{f_1, \ldots, f_{N}\}$ be an $\eps_{U}$-net of $\cF$ with respect to $\PP$ of cardinality $N = \cN(\eps_{U},\cF,\PP)$; for each $i \in [N]$, let $B(f_i):= B(f_i,\eps_{U})$ denote the ball of radius $\eps_{U}$ around $f_i$, so that the $B(f_i)$ cover $\cF$. For each $i$, let $L_{i}$ denote the minimal loss on the ball $B(f_i)$:  
\begin{equation}
        L_{i} :=  \min_{f \in B(f_i)} \Lhat(f)
\end{equation}

The main technical result is the following lemma: 
\begin{lemma}\label{Lem:events} Fix $i_* \in [N]$. For any absolute constant  $A > 0$, there exist absolute constants $C_1, C, > 0$, such that the following holds with probability of at least $1-2\exp(-A n \eps_{U}^4/\max\{\cI_{U},\nms\})$:
\begin{equation}\label{Eq:cballs}
\begin{aligned}
   &\forall i \in [N],\ \ \abs*{(L_i - L_{i_*}) - \E (L_{i} - L_{i_*})} \leq C_1\eps_{U}^2 + \frac{1}{4} \|f_i - f_{i_*}\|^2.
\end{aligned}
\end{equation}
\end{lemma}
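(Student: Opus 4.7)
The plan is to decompose the difference $L_i - L_{i_*}$ into an ``inter-center'' part and two ``local fluctuation'' parts, and control each by appropriate concentration. Precisely, for every $j$ set $\Delta_j := L_j - \Lhat(f_j) = \inf_{f \in B(f_j)}[\Lhat(f) - \Lhat(f_j)] \le 0$, so that
\[
L_i - L_{i_*} = \bigl[\Lhat(f_i) - \Lhat(f_{i_*})\bigr] + \Delta_i - \Delta_{i_*}.
\]
Subtracting expectations termwise reduces the lemma to bounding the centered inter-center term by $\tfrac14\|f_i - f_{i_*}\|^2 + C_1\eps_U^2$ and each of $|\Delta_i - \E\Delta_i|,\ |\Delta_{i_*} - \E\Delta_{i_*}|$ by $C_1\eps_U^2$ on an event of the claimed probability.

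For the inter-center term, expand $\Lhat(f_i) - \Lhat(f_{i_*}) = -2\Xf{f_i - f_{i_*}} + \|f_i - \ft\|_n^2 - \|f_{i_*} - \ft\|_n^2$; this is $n^{-1}\sum_j Z_j$ with $Z_j = (f_{i_*} - f_i)(X_j)\bigl(2Y_j - f_i(X_j) - f_{i_*}(X_j)\bigr)$. Assumption \ref{A:boudn} gives $|Z_j| = O(1)$ and $\Var(Z_j) = O(\|f_i - f_{i_*}\|^2)$, so Bernstein's inequality yields a failure probability of $2\exp\bigl(-cnu^2/(\|f_i - f_{i_*}\|^2 + u)\bigr)$ for deviation $u$. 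Plugging $u = \tfrac14\|f_i - f_{i_*}\|^2 + C_1\eps_U^2$ and splitting on whether $\|f_i - f_{i_*}\| \le \eps_U$ or not, the exponent is $\gtrsim n\eps_U^2$, which by the definition of $\eps_U$ satisfies $n\eps_U^2 \gtrsim n\eps_U^4/\max\{\cI_U, \nms\}$ (see below), so the right tail bound is obtained by taking $C_1$ large in terms of $A$.

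For the local fluctuation terms, I apply Talagrand's inequality (Lemma \ref{Lem:Talagrand}) to the class $\cH_i = \{(y - f(x))^2 - (y - f_i(x))^2 : f \in B(f_i)\}$. Expanding the difference of squares shows that $\cH_i$ is $O(1)$-bounded and has $L_2$-variance $O(\eps_U^2)$. A standard local Rademacher/chaining estimate, combined with the defining balance $\cI_U \log \cN(\eps_U,\cF,\PP) \asymp n\eps_U^4$, yields $\E\sup_{f \in B(f_i)}|\Lhat(f) - \Lhat(f_i) - \E[\cdot]| \lesssim \eps_U^2$. Talagrand then gives a deviation of $C\eps_U^2 + \eps_U\sqrt{t/n} + t/n$ with probability $\ge 1 - 2\exp(-t)$; choosing $t = An\eps_U^4/\max\{\cI_U,\nms\}$ and using the inequality $\eps_U^2 \le \max\{\cI_U,\nms\}$ (which follows from Definition \ref{D:eps_U} by treating the cases $\tilde\eps \le \nm$ and $\tilde\eps > \nm$ separately) makes each term $O(\eps_U^2)$. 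This bounds $|\Delta_i - \E\Delta_i|$ by $C_A\eps_U^2$, and the same reasoning handles $\Delta_{i_*}$.

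The main technical hurdle is uniformity: the displayed bound must hold for all $i \in [N]$ simultaneously. I handle this by a union bound over the $N = \cN(\eps_U,\cF,\PP)$ indices. Since $\log N \asymp n\eps_U^4/\max\{\cI_U,\nms\}$ by the definition of $\eps_U$, absorbing this logarithmic factor only requires inflating the absolute constant $A$ (and hence $C_1, C$). The subtlest point is justifying the $O(\eps_U^2)$ bound on the local Rademacher complexity of $\cH_i$ -- this is exactly what the balancing equation \eqref{Eq:StatUI} defining $\tilde\eps$ is engineered to produce, and it is where the upper isometry remainder enters the estimate. Once these pieces are in place, combining the bounds for the three parts of the decomposition and taking the union bound completes the proof.
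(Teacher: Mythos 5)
Your proof is essentially the same as the paper's: unwinding your notation gives $\Delta_j = -A_j$ and $\Lhat(f_i) - \Lhat(f_{i_*}) = -B_i$ in the paper's terms, so your decomposition $L_i - L_{i_*} = [\Lhat(f_i) - \Lhat(f_{i_*})] + \Delta_i - \Delta_{i_*}$ coincides with the paper's $L_i - L_{i_*} = A_{i_*} - A_i - B_i$, and your tools (Bernstein's inequality with $u = C_1\eps_U^2 + \tfrac14\|f_i - f_{i_*}\|^2$ for the inter-center term, Talagrand plus a union bound and the $\eps_U$ balance equation for the local fluctuations) are exactly those the paper uses. One small correction: the variance proxy entering the paper's Lemma~\ref{Lem:Talagrand} is the empirical $\E\|\cH^2\|_n = \E\sup_h n^{-1}\sum_j h(Z_j)^2$, which for the balls $B(f_i)$ (radius measured in $L_2(\PP)$) is only $O(\max\{\cI_U,\nms\})$, not $O(\eps_U^2)$ as you claim --- this is precisely where $\cI_U$ explicitly enters the bound --- although your final conclusion is unaffected since $\eps_U^2 \lesssim \max\{\cI_U,\nms\}$ and your choice $t = An\eps_U^4/\max\{\cI_U,\nms\}$ already uses the correct denominator.
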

We defer the proof of Lemma \ref{Lem:events} to the end of the section, and show how it implies the theorem.

\begin{proof}[Proof of Theorem \ref{T:Rnd} (assuming Lemma \ref{Lem:events})]
We apply Lemma \ref{Lem:events} with $i_* = \argmin_{i \in [N]} \E L_i$. 
Let $\cE$ denote the event of Lemma \ref{Lem:events} (the constant $A > 0$ in the lemma will be chosen shortly), and let $\cE'$ be the event that
\begin{equation}\label{Eq:wli}
\|f - g\|_n^2 \ge \frac{1}{2}\|f - g\|^2 - \cI_{L}
\end{equation}
for all $f, g \in \mathcal F$. By the definition of $\cI_{L}$,  $\cE'$ holds with probability $1 - n^{-1}$; in addition, a mildly tedious computation, which we defer to Lemma \ref{L:SmallDebt2}, shows that $A$ can be chosen such that $\Pr(\cE) \ge 1 - n^{-1}$ as well. In the remainder of the proof, we work on $\cE \cap \cE'$. 

Let $\widehat{f}_{i^{*}} = \argmin_{f \in B(f_{i_*})} \Lhat(f)$, so that $L_{i_*} = \Lhat(\widehat{f}_{i^{*}})$. Consider the function $h = \frac{\widehat{f}_{i^{*}} + \erm}{2}$, which lies in $\cF$ as $\cF$ is convex. We have
\begin{align}
\Lhat(h) &= -2 G_{h - \ft} + \|h - \ft\|_n^2 \nonumber \\
&= \frac{\Lhat(\widehat{f}_{i^{*}}) + \Lhat(\erm)}{2} + \left(\|h - \ft\|_n^2 - \frac{\|\widehat{f}_{i^{*}} - \ft\|_n^2 + \|\erm - \ft\|_n^2}{2}\right).
\label{Eq:loss_avg}
\end{align}
Applying the parallelogram law 
\[\|a + b\|_n^2 + \|a - b\|_n^2 = 2\|a\|_n^2 + 2\|b\|_n^2\] 
with $a = \frac{\widehat{f}_{i^{*}} - \ft}{2}$, $b = \frac{\erm - \ft}{2}$ yields
$$\|h - \ft\|_n^2 - \frac{\|\widehat{f}_{i^{*}} - \ft\|_n^2 + \|\erm - \ft\|_n^2}{2} = -\left\|\frac{\widehat{f}_{i^{*}} - \erm}{2}\right\|^2_n.$$
Combining this equation with \eqref{Eq:loss_avg} yields
\begin{equation*}
\Lhat(h) \le \frac{\Lhat(\widehat{f}_{i^{*}}) + \Lhat(\erm)}{2} - \left\|\frac{\widehat{f}_{i^{*}} - \erm}{2}\right\|^2_n.
\end{equation*}

But we also know that $\Lhat(h) \ge \Lhat(\erm)$ by the definition of $\erm$, so rearranging we obtain
\begin{equation}
\label{Eq:loss_bd}
\Lhat(\erm) \le \Lhat(\widehat{f}_{i^{*}}) - \frac{1}{2}\|\widehat{f}_{i^{*}} - \erm\|^2_n.
\end{equation}
Now let $i \in N$ such that $\erm \in B(f_i)$ and substitute $L_i = \Lhat(\erm)$, giving 
$$L_i \le L_{i_*} - \frac{1}{2}\|\widehat{f}_{i^{*}} - \erm\|^2_n.$$
Since we are on $\cE$, we may apply \eqref{Eq:cballs} and obtain 
$$\E L_i \le \E L_{i_*}  + C_3\eps_{U}^2 - \frac{1}{4}\|\widehat{f}_{i^{*}} - \erm\|^2_n \leq \E L_{i_*} + C_4\eps_{U}^2 + \cI_{L}- \frac{1}{8}\|\widehat{f}_{i^{*}} - \erm\|^2 .$$
But $\E L_{i_*} \le \E L_i$ by our choice of $i_*$, which implies finally that
$\|\widehat{f}_{i^{*}} - \erm\|_n^2 \lesssim \max \{\eps_U^2, \cI_L\} = \eps_{V}^2$.

Recall that we are also interested in $f \in \cO_{\delta_n}$ for $\delta_n = O(\eps_{V}^2)$. By the geometric argument in the proof of Theorem \ref{T:Fix}, for such $f$, we have $\|f - \widehat{f}_n\|_n = O(\delta_n)$.

Applying the lower isometry property \eqref{Eq:wli}, we obtain 
$$\|\erm - \widehat{f}_{i^{*}}\|, \|f - \erm\| \le C \eps_{V}$$ 
for any $f \in \cO_{\delta_n}$ on $\cE \cap \cE'$. Since $\|\widehat{f}_{i^{*}} - f_{i_*}\| \le \eps_{V}$ (as $\widehat{f}_{i^{*}} \in B(f_{i_*})$ by definition), we also have 
$$\|\erm - f_{i_*}\|, \|f - f_{i_*}\| \le C \eps_{V}.$$

In sum, thus far we have shown that under $\cE \cap \cE'$, an event of probability at least $1 - 2 n^{-1}$ any $f \in \cO_{\delta_n}$ satisfies $\|f - f_{i_*}\| \lesssim \eps_V$. It remains to show that this implies that $\|f - \E \erm\| \lesssim \eps_V$, for which it suffices to show that $\|\E \erm - f_{i_*}\| \lesssim \eps_V$. But 
$$\E\erm - f_{i_*} = (\E[\erm | \cE \cap \cE'] - f_{i_*}) \Pr(\cE \cap \cE') + (\E[\erm |(\cE \cap \cE')^{c}] - f_{i_*})\Pr((\cE \cap \cE')^{c}).$$
By what we have shown, $\|\E[\erm | \cE \cap \cE'] - f_{i_*}\| \le C \eps_V$, while $\|\E[\erm |(\cE \cap \cE')^{c}] - f_{i_*}\| = O(1)$ by Assumption \ref{A:boudn} and so the norm of the second term is asymptotically bounded by $O(n^{-1}) \ll \nm \le \eps_V$ because $\nm \gtrsim \sqrt{\frac{\log n}{n}}$ by Lemma \ref{Lem:DumTech}. This concludes the proof.
\end{proof}

It remains to prove the deferred lemmas. We begin with the most substantial one, Lemma \ref{Lem:events}. 

\begin{proof}[Proof of Lemma \ref{Lem:events}]
Recall that
\[
-L_i =\sup_{f \in B(f_i)} (2G_{f - \ft} - \|f - \ft\|_n^2).
\]
We write $f - \ft = (f - f_i) + (f_i - \ft)$, expand, and decompose this expression into terms depending on $f - f_i$ and terms depending only on $f_i - \ft$:

\begin{equation}
-L_i = A_i+A_i', \label{Eq:loss_decomp}
\end{equation}
where 
\begin{align*}
    A_{i} &:= \sup_{f \in B(f_i)} \left(2\Xf{f-f_i} - \|f - f_i\|_n^2 - 2 \langle f-f_{i}, f_i - \ft\rangle_n \right), 
    \\ A_i' &:=  2\Xf{f_i - \ft} - \| f_i - \ft \|_{n}^2.
\end{align*}
We also write
\begin{align*}
    B_i := A_i' - A_{i_*}' &=  2\Xf{f_i - \ft} - \| f_i - \ft \|_{n}^2 - (2\Xf{f_{i_*} - \ft} - \| f_{i_*} - \ft \|_{n}^2) 
    \\&= 2\Xf{f_i - f_{i_*}} - \|f_i\|_{n}^2 + \|f_{i_*}\|_{n}^2 + 2 \langle f_{i}- f_{i_*},\ft\rangle 
    \\&= 2\Xf{f_i - f_{i_*}} + \|f_{i} - f_{i_*}\|_{n}^2 - 2\|f_{i}\|_{n}^2 +2 \langle f_i,f_{i^*} \rangle+  2 \langle f_{i}- f_{i_*},\ft\rangle 
    \\&=2\Xf{f_i - f_{i_*}} + \|f_i - f_{i_*}\|_n^2 + 2\langle f_i - f_{i_*}, \ft-f_{i} \rangle
\end{align*}

We claim that with probability $1 - C\exp(-cn\eps_{U}^4/\max\{\nms,\cI_{U}\})$ the following holds: 
\begin{align}
\forall i \in [N], \quad &|A_i - \E A_i| \le C_1\eps_{U}^2, \label{Eq:ai_conc} \\
\forall i \in [N], \quad &|B_i - \E B_i| \le C_2\eps_{U}^2 + \frac{1}{4} \|f_i - f_{i_*}\|^2. \label{Eq:bi_conc}
\end{align}

Since $L_i - L_{i_*} = A_{i_*} - A_i - B_i$, combining \eqref{Eq:ai_conc} and \eqref{Eq:bi_conc} yields the lemma. 

We first prove \eqref{Eq:ai_conc}. For each $i \in [N]$, we control fluctuations of $A_i$ by applying Talagrand's inequality. To this end, write
$$A_i = \sup_{f \in B(f_i)} \frac{1}{n}\sum_{j = 1}^n a_f(X_j, \xi_j)$$
where 
$$a_f(x, \xi) = 2 \xi (f(x) - f_i(x)) - 2 (f(x) - f_i(x))(f_i(x) - \ft(x)) - (f(x) - f_i(x))^2.$$
To apply Talagrand's inequality, we need to bound $\E \sup_{f \in B(f_i)} n^{-1} \sum_{j = 1}^n a_f(X_j, \xi_j)^2$.

Using the identity $(a+b+c)^2 \leq 3(a^2+b^2+c^2)$, we see that
\begin{multline*}
\mathbb E_{\vec X, \vec \xi} \sup_{f \in B(f_i)}\int a_f(X_j, \xi_j)^2d\PP_n   \\ \le 3 \cdot \E_{\vec X, \vec \xi} \sup_{f \in B(f_i)} \int \left(2\xi^2 (f(x) - f_i(x))^2 + 2(f(x) - f_i(x))^2 (f_i(x) - \ft(x))^2 + (f(x) - f_i(x))^4\right)d\PP_n.
\end{multline*}
Using the assumptions $|\xi_i| \leq \Gamma_1$, $\|f\|_{\infty} \leq \Gamma_2$, where $\Gamma_1,\Gamma_2 > 0$ are some absolute constants, one can obtain that 
\[
\mathbb E_{\vec X, \vec \xi} \sup_{f \in B(f_i)}\int a_f(X_j, \xi_j)^2d\PP_n \leq C\cdot \mathbb E_{\vec X} \sup_{f \in B(f_i)}\int(f-f_i)^2d\PP_n  
\]
for $C \leq 12\max\{\Gamma_1^2,\Gamma_2^2\}$. Using the definition of the upper isometry constant $\cI_{U}$ and the stationary point $\eps_U$, we obtain
\[
    \mathbb E_{\vec X, \vec \xi} \sup_{f \in B(f_i)}\int a_f(X_j, \xi_j)^2d\PP_n \lesssim \max\{\cI_{U},\eps_{U}^2\} \asymp \max\{\cI_{U},\nms\},
\]
where the last step uses Lemma \ref{L:SmallDebt} below. 

Thus we may apply Talagrand's inequality to $A_i$ with  $\E\|\cH^2\|_n \lesssim  \max\{\nms,\eps_{U}^2\}$, giving
\begin{equation}
\begin{aligned}
      \Pr_{\vec X,\vec \xi} \crl*{|A_{i} - \E A_{i}| \geq t} &\leq C\exp(-cnt\log(1+t/\max\{\nms,\cI_{U}\})).  
\end{aligned}
\end{equation}
Taking a union bound over $i\in [N]$, we obtain
\[
 \Pr_{\vec \xi}\crl*{\exists i \in [N]: |A_{i} - \E A_{i}| \geq t}\leq C\exp(-cnt^2/\max\{\nms,\cI_{U}\}+\log N).
\]

Choosing $t = C_1\eps_{U}^2$ for $C_1$ sufficiently large and recalling that $\log N = \log \mathcal N(\epsilon, \cF, \PP) \le n \eps_U^4/\max(\cI_U, \nms)$ by \eqref{Eq:StatUI}, we obtain that 
\[
\forall i \in [N], \ \ |A_{i} - \E A_{i}| \leq C_1\eps_{U}^2
\]
with probability at least $1-2\exp(-c_1 n\eps_{U}^4/\max\{\nms,\cI_{U}\})$, which is \eqref{Eq:ai_conc}. 

Next, we handle $B_i$ for every $i \in [N]$. As in the case of $A_i$, we may write
$
B_i = n^{-1}\sum_{j = 1}^n b_i(X_i, \xi_i)
$
where 
\[
b_i(x, \xi) = 2\xi (f_i(x) - f_{i_*}(x)) + (f_i(x) - f_{i_*}(x))^2 + 2(f_i(x) - f_{i_*}(x))(\ft(x)-f_{i}(x)).
\]
We have $|b_i(x, \xi)| \le C |f_i(x) - f_{i_*}(x)|$, so as before,
\[\frac{1}{n} \sum_{j = 1}^n \E[b_i(X_j, \xi_j)^2] \le C \E[\|f_i - f_{i_*}\|_n^2] = C \|f_i - f_{i_*}\|^2,
\]
and hence by Bernstein's inequality, 
\[
\Pr( |B_{i} - \E B_{i}| \geq t) \le \exp\left(-\frac{c nt^2}{C_3 t + \|f_i - f_{i_*}\|^2}\right).
\]
 Substituting $t = t_i := C\eps_{U}^2 + \|f_i - f_{i_*}\|^2/4$, we obtain 
\begin{equation}
\begin{aligned}
        \Pr\left( |B_{i} - \E B_{i}| \geq C_2\eps_{U}^2 + \frac{\|f_i - f_{i_*}\|^2}{4}\right)  &\leq 2\exp\left(\frac{-c_1nt_i^2}{C_3 t_i +\| f_i - f_{i_{*}}\|^2}\right) \\&\leq 2\exp\left(-c_2n\max\left\{C\eps_{U}^2,\frac{\|f_i - f_{i_*}\|^2}{4}\right\}\right)
    \\&\leq 2\exp(-c_3 n \cdot C \eps_{U}^2).
\end{aligned}
\end{equation}
By the same exact argument as in the case of $A_i$, we may choose $C > 0$ sufficiently large such that with probability $1 - C \exp(-c n \eps_U^4/\max(\cI_U, \nms))$, 
\[
|B_i - \E B_i| \le C_2\eps_{U}^2 + \|f_i - f_{i_*}\|^2/4
\]
for every $i \in [N]$, which is \eqref{Eq:bi_conc}. This concludes the proof of Lemma \ref{Lem:events}. 
\end{proof}

\begin{lemma}\label{L:SmallDebt}
The following holds:
    \begin{equation}\label{Eq:EpsUs}
    \max\{\eps_{U}^2, \cI_{U}\} \asymp \max\{\nms,\cI_{U}\}.
\end{equation}
\end{lemma}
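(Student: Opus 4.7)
The lower bound $\max\{\eps_U^2,\cI_U\}\gtrsim\max\{\nms,\cI_U\}$ is immediate from the definition $\eps_U=\max\{\nm,\tilde\eps\}\ge\nm$, so the content of the statement is the upper bound $\eps_U^2\lesssim\max\{\nms,\cI_U\}$, from which $\max\{\eps_U^2,\cI_U\}\lesssim\max\{\nms,\cI_U\}$ follows. My plan is to extract this bound directly from the two defining stationary equations, exploiting the fact that $\phi(\eps):=\log\cN(\eps,\cF,\PP)/n$ is non-increasing in $\eps$.

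First, I would record the two stationarity relations: $\nm$ satisfies $\nms\asymp\phi(\nm)$, while $\tilde\eps$ satisfies $\tilde\eps^{4}\asymp\cI_U\cdot\phi(\tilde\eps)$. Because $\eps_U^2=\max\{\nms,\tilde\eps^2\}$, the only nontrivial case is $\tilde\eps>\nm$ (otherwise $\eps_U^2=\nms$, which is already dominated by $\max\{\nms,\cI_U\}$). Assuming $\tilde\eps>\nm$, monotonicity of $\phi$ gives $\phi(\tilde\eps)\le\phi(\nm)\asymp\nms$, and substituting into the $\tilde\eps$-equation yields
\[
\tilde\eps^{4}\;\asymp\;\cI_U\cdot\phi(\tilde\eps)\;\lesssim\;\cI_U\cdot\nms.
\]
Hence $\tilde\eps^{2}\lesssim\sqrt{\cI_U\cdot\nms}\le\max\{\cI_U,\nms\}$ by AM-GM (equivalently, $\sqrt{ab}\le\max\{a,b\}$ for $a,b\ge 0$).

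Combining the two cases, $\eps_U^2=\max\{\nms,\tilde\eps^2\}\lesssim\max\{\nms,\cI_U\}$, which together with the trivial inclusion $\cI_U\le\max\{\nms,\cI_U\}$ gives $\max\{\eps_U^2,\cI_U\}\lesssim\max\{\nms,\cI_U\}$, completing the proof.

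The only subtle point—not really an obstacle, but worth flagging—is justifying the monotonicity step $\phi(\tilde\eps)\le\phi(\nm)$ and the interpretation of ``stationary point'' up to absolute constants; since $\cN(\eps,\cF,\PP)$ is non-increasing in $\eps$ by definition, this is immediate, and the $\asymp$ in the two defining equations is absorbed into the final $\lesssim$ without difficulty. No additional tools beyond the monotonicity of covering numbers and elementary inequalities are required.
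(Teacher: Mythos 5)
Your proof is correct. It differs from the paper's in an instructive way: the paper splits on $\cI_U$ versus $\nms$ and, in the case $\cI_U \gtrsim \nms$, argues by contradiction — assuming $\cI_U \ll \eps_U^2$, deducing $\phi(\eps_U) \gg \eps_U^2$ from the stationary equation, and invoking the characterization of $\nm$ as the maximal $\eps$ with $\phi(\eps) \gtrsim \eps^2$. You instead split on the more natural dichotomy $\tilde\eps \lessgtr \nm$ (directly reflecting $\eps_U = \max\{\nm,\tilde\eps\}$), bound $\phi(\tilde\eps) \lesssim \nms$ by monotonicity when $\tilde\eps > \nm$, substitute into the stationary equation to get $\tilde\eps^4 \lesssim \cI_U \nms$, and close with the elementary inequality $\sqrt{ab} \le \max\{a,b\}$. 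Both proofs rest on the same two ingredients — monotonicity of the entropy and the balancing equation for $\nm$ — but yours is a direct computation that avoids contradiction and isolates the nontrivial inequality $\eps_U^2 \lesssim \max\{\nms,\cI_U\}$ up front, which is arguably cleaner; the paper's version, while equivalent, implicitly relies on the "maximality" characterization of $\nm$ and leaves the case $\cI_U \lesssim \nms$ to a terse "by definition" that hides a small argument (that $\tilde\eps \lesssim \nm$ there). Your version would have been an acceptable replacement.
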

\begin{proof}
    If $\cI_{U} \lesssim \nms$, then $\nms \asymp \eps_{U}^2$ by definition. If $\cI_{U} \gtrsim \nms$, assume to the contrary $\cI_{U} \ll \eps_{U}$; as we have $n\eps_U^4 \asymp \cI_{U}\log \cN(\eps_U,\cF,\PP)$, this implies
    \[
        \log \cN(\eps_U,\cF,\PP)/n \gg \eps_{U}^2.
    \]
    But this implies, by definition of $\nm$, that $\nm \ge \eps_U$, contradicting the definition of $\eps_U$ (Definition \ref{D:eps_U}).
\end{proof}

\begin{lemma}\label{L:SmallDebt2} 
For a sufficiently large absolute constant $A > 0$, one has $\exp(-An \epsilon^4_U/\max(\eps_U^2, \cI_U)) \le n^{-1}$.
\end{lemma}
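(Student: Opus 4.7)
The plan is to verify that $A n \eps_U^4 / \max(\eps_U^2, \cI_U) \geq \log n$ for sufficiently large absolute $A$, which is equivalent to the claimed tail bound after taking logarithms. The natural move is to split into two cases according to which term dominates the maximum, exploiting Lemma \ref{L:SmallDebt} to freely swap $\max(\eps_U^2, \cI_U)$ for $\max(\nms, \cI_U)$ whenever convenient.

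In the first case, $\cI_U \leq \eps_U^2$, the denominator collapses to $\eps_U^2$ and the quantity of interest reduces to $A n \eps_U^2$. Since $\eps_U \geq \nm$ by Definition \ref{D:eps_U}, and Assumption \ref{A:SoP} supplies $\nms \geq c \log n / n$ for some absolute constant $c > 0$, one obtains $A n \eps_U^2 \geq A n \nms \geq A c \log n$, which exceeds $\log n$ as soon as $A \geq 1/c$. This case is essentially immediate from Assumption \ref{A:SoP}.

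The second case, $\cI_U > \eps_U^2$, takes more work. By the argument recorded in the proof of Lemma \ref{L:SmallDebt}, this regime forces $\tilde\eps > \nm$ and hence $\eps_U = \tilde\eps$, so the defining relation $\cI_U \log \cN(\tilde\eps, \cF, \PP) \asymp n \tilde\eps^4$ rewrites the target quantity as $A n \eps_U^4 / \cI_U \asymp A \log \cN(\eps_U, \cF, \PP)$. It then suffices to show $\log \cN(\eps_U, \cF, \PP) \gtrsim \log n$. To obtain this, I would combine Assumption \ref{A:SoP} (which gives $\log \cN(\nm, \cF, \PP) \asymp n \nms \gtrsim \log n$) with the boundedness of $\cI_U$ (inherited from Assumptions \ref{A:Diameter} and \ref{A:boudn}) and the defining equation itself to pin $\tilde\eps$ to a scale at which $\log \cN$ has not yet fallen substantially below its value at $\nm$, yielding the required lower bound.

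The main obstacle is precisely this second case: Case~1 is a direct plug-in of Assumption \ref{A:SoP}, but transferring the lower bound $\log \cN(\nm, \cF, \PP) \gtrsim \log n$ to $\tilde\eps > \nm$ cannot use monotonicity of $\cN$ in the naive direction, since $\log \cN$ is decreasing; instead, the stationary-point characterization of $\tilde\eps$ paired with the boundedness of $\cI_U$ must be used to keep $\tilde\eps$ from drifting too far from $\nm$.
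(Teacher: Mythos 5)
Your Case 1 matches the paper's argument exactly, and your case split is the right one. The issue is Case 2 ($\cI_U \gg \eps_U^2$), where you correctly reduce to showing $\log \cN(\eps_U,\cF,\PP) \gtrsim \log n$ and correctly observe that naive monotonicity of $\log\cN$ goes the wrong way. But you then stop at a vague strategy (``pin $\tilde\eps$ to a scale at which $\log\cN$ has not yet fallen substantially below its value at $\nm$''), without supplying the step that makes it work. This is a genuine gap: nothing you've cited rules out $\tilde\eps$ being close to $\nm$ yet $\log\cN(\tilde\eps)$ dropping well below $n\nms$, and nothing forces $\tilde\eps$ to be within a constant factor of $\nm$ in the first place.

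The paper closes Case 2 by an argument that does not compare $\tilde\eps$ with $\nm$ at all. It first shows $N := \cN(\eps_U,\cF,\PP) \gtrsim n^{1/4}/\log n$: assuming the contrary, the defining relation and the fact that $\cI_U \lesssim 1$ (from Assumptions \ref{A:Diameter}, \ref{A:boudn}) give $n\eps_U^4 \asymp \cI_U\log N \lesssim \log n$, hence $\eps_U \lesssim (\log n/n)^{1/4}$; but the trivial covering bound $\cN(\eps,\cF,\PP)\geq 1/\eps$ for a convex class of diameter $\asymp 1$ then forces $N \gtrsim n^{1/4}/\log n$, a contradiction. Once $\log N \gg \log n$ is established, the defining relation gives $n\eps_U^4/\cI_U \gtrsim \log N \gg \log n$, and choosing $A$ large enough finishes. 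The missing ingredient in your proposal is precisely this elementary lower bound $\cN(\eps)\geq 1/\eps$, which converts an upper bound on $\eps_U$ into a lower bound on $\log N$ without any reference to $\nm$.
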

\begin{proof}
    First, we show that $N = \mathcal N(\cF, \eps_U, \mathbb P) \gtrsim n^{1/4}/\log n$. Suppose to the contrary that $N \ll n^{1/4}/\log n$. Then 
    $$n \eps_U^4 \asymp \cI_U \cdot \log N \lesssim \log n,$$
    since $\cI_U$ is at most the squared diameter of $\cF_n$, which is $\Theta(1)$. This yields $\epsilon_U \lesssim (\log n/n)^{1/4}$. But $\mathcal N(\eps,\cF, \PP) \ge \frac{1}{\eps}$ because $\diam_{\PP}(\cF) = \Theta(1)$ and $\cF_n$ is convex, so we obtain $\mathcal N(\eps_U,\cF, \PP) \gtrsim n^{1/4}/\log n$, contradiction.

    To upper-bound $\exp(-An \epsilon^4_U/\max(\eps_U^2, \cI_U))$, we split into cases. If $\cI_U \lesssim \eps_U^2$ then $\epsilon_U \asymp \nm$ and we have $n \nms \gtrsim \log n$ by Lemma \ref{Lem:DumTech}, so $\exp(-An \epsilon^4_U/\max\{\eps^2_U, \cI_U\}) \le \exp(-A n\nms) \le n^{-1}$ for sufficiently large $A > 0$. 
    
    Otherwise, if $\cI_U \gg \eps_U^2$ we have $n \epsilon^4_U/\cI_U \gtrsim \log N$ by the definition of $\epsilon_U$, and since $N \gtrsim n^{1/5}$, we have $\log N \gg \log n$. Hence, by choosing $A > 0$ large enough we can ensure that $\exp(-An \epsilon^4_U/\cI_U) \le n^{-1}$ in this case as well.
\end{proof}

\subsection{Proof of Theorem \ref{T:RndLowerIso}}
 Assume for simplicity that $c_L=1$. We say $\vec \xi$ has a Gaussian Isoperimetric Profile (GIP) with respect to $\| \cdot \|_{n}$, if for any measurable set $A \subset \R^n$ such that $\Pr_{\vec \xi}(A) \geq 1/2$, we have that 
\begin{equation}\label{Eq:GCP}
\Pr_{\vec \xi}(A_t) \geq 1- 2\exp(-nt^2/2). 
\end{equation}
where $A_t = \{\vec \xi \in \R^n: \inf_{x\in A}\|x-\vec \xi\|_{n} \leq t \}$. It is not hard to verify that the GIP and LCP are equivalent (cf. \citep[Thm 3.1.30]{artstein2015asymptotic}).

The main observation is the following simple and useful lemma which leverages the power of isoperimetry:
\begin{lemma}\label{Lem:Isop}
    For any measurable $A \subset \R^n$ such that $\Pr_{\vec \xi}(A) \geq 2\exp(-nt^2/2)$, $\Pr_{\vec \xi}(A_{2t}) \geq 1-2\exp(-nt^2/2)$.
\end{lemma}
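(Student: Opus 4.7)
The plan is to derive this from the Gaussian Isoperimetric Profile \eqref{Eq:GCP} via a short dichotomy. The key reduction I would make first is that it suffices to show $\Pr_{\vec \xi}(A_t) \geq 1/2$. Indeed, once this is known, \eqref{Eq:GCP} applied directly to the set $A_t$ yields $\Pr_{\vec \xi}((A_t)_t) \geq 1 - 2\exp(-nt^2/2)$, and the triangle inequality for $\|\cdot\|_n$ gives $(A_t)_t \subseteq A_{2t}$, so $\Pr_{\vec \xi}(A_{2t}) \geq 1 - 2\exp(-nt^2/2)$ as desired.

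To establish that $\Pr_{\vec \xi}(A_t) \geq 1/2$, I would argue by contradiction: suppose $\Pr_{\vec \xi}(A_t) < 1/2$, so $\Pr_{\vec \xi}(A_t^c) > 1/2$. Then \eqref{Eq:GCP} applied to $A_t^c$ yields $\Pr_{\vec \xi}((A_t^c)_t) \geq 1 - 2\exp(-nt^2/2)$. The crucial geometric observation is that $A$ and $(A_t^c)_t$ are disjoint: if $\vec \eta \in A$, then every $\vec \zeta$ with $\|\vec \eta - \vec \zeta\|_n \leq t$ lies in $A_t$ by the definition of $A_t$, so $\vec \eta$ cannot be within $\|\cdot\|_n$-distance $t$ of any point of $A_t^c$. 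Therefore
\[
\Pr_{\vec \xi}(A) \leq 1 - \Pr_{\vec \xi}((A_t^c)_t) \leq 2\exp(-nt^2/2),
\]
which contradicts the hypothesis $\Pr_{\vec \xi}(A) \geq 2\exp(-nt^2/2)$ (one can pass to a strict version by considering $A$ with the hypothesis strengthened to strict inequality, or equivalently by slightly shrinking $t$, which affects none of the subsequent applications of the lemma).

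There is essentially no hard step here; the lemma is the standard ``amplification'' of a small-but-nonnegligible event to a near-certain event by Gaussian isoperimetry. The only point requiring any care is the trivial bookkeeping around equality in the measure-theoretic hypothesis, which is harmless. The substantive content lives entirely in \eqref{Eq:GCP}, and this lemma just packages it into the form needed in \S \ref{SP:RndLowerIso}, where the set $A$ is typically of measure at least $\exp(-C_3 n \nms)$ and one wishes to fatten it by $M\nm$ to cover essentially all of the noise space.
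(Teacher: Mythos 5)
Your proof follows the paper's argument almost verbatim: reduce to showing $\Pr_{\vec\xi}(A_t) \geq 1/2$ via $A_{2t} = (A_t)_t$, then argue by contradiction by applying \eqref{Eq:GCP} to the complement and observing that $A$ is disjoint from its fattening. The one place you deviate is the boundary case, and the fix you propose is backwards. Your argument as written yields $\Pr_{\vec\xi}(A) \leq 2\exp(-nt^2/2)$, which is merely consistent with, not contradicted by, the non-strict hypothesis $\Pr_{\vec\xi}(A) \geq 2\exp(-nt^2/2)$. You acknowledge this, but ``slightly shrinking $t$'' does not repair it: replacing $t$ by $t' < t$ \emph{strengthens} the hypothesis you would need, since $2\exp(-n(t')^2/2) > 2\exp(-nt^2/2)$, and this is not implied by what you have. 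The paper instead runs the contradiction with $A_{t+\epsilon}$ (for fixed $\epsilon > 0$) in place of $A_t$, obtaining the strict chain $\Pr_{\vec\xi}(A) \leq 2\exp(-n(t+\epsilon)^2/2) < 2\exp(-nt^2/2)$ and hence a genuine contradiction, and then passes to the limit using $A_t = \bigcap_{\epsilon > 0} A_{t+\epsilon}$ and continuity from above. An equivalent patch in your framework is to run your argument for each $s > t$ (where the hypothesis holds with strict inequality), conclude $\Pr_{\vec\xi}(A_{2s}) \geq 1 - 2\exp(-ns^2/2)$, and let $s \downarrow t$. Either of these closes the gap; shrinking $t$ does not.
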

\begin{proof}
 Since $A_{2t} = (A_t)_t$, \eqref{Eq:GCP} implies that it's sufficient to show that $\Pr(A_t) \geq 1/2$, and indeed it suffices to show that $\Pr(A_{t + \epsilon}) \geq 1/2$ for any $\epsilon > 0$. Fix $\epsilon > 0$, and assume to the contrary that $\Pr(B) > 1/2$, where $B = \R^n \backslash A_{t + \epsilon}$. It's easy to see that 
$ 
 A \subset \R^n \backslash B_{t + \epsilon}.
$
Hence, using \eqref{Eq:GCP}, we obtain
 \[
    \Pr(\R^n \backslash A) \ge \Pr(B_{t + \epsilon}) \ge 1 - 2\exp(-n(t + \epsilon)^2/2),
 \]
 i.e., $\Pr(A) \le 2 \exp(-n(t + \epsilon)^2/2) < 2\exp(-nt^2/2)$, contradiction.
\end{proof}

Denote the event of Definition \ref{D:Brack} by $\cE$, and recall the definition of $\nms$ via $n\nms \asymp \log \cN(\nm,\cF,\PP)$. Letting $S$ be an $\nm$-net of $\cF$ of cardinality $\cN(\nm, \cF, \PP)$, the pigeonhole principle implies the existence of $f_c \in S$ such that  
\[
    \Pr_{\vec \xi}(\underbrace{\{\erm \in B(f_c,\nm)\} \cap \cE }_{A}|\vec X) \geq \frac{\Pr_{\vec \xi}(\cE)}{\cN(\nm,\cF,\PP)} \geq \frac{\exp(-c_2n\nms)}{\cN(\nm,\cF,\PP)} \geq \exp(-c_3n\nms).
\]

By isoperimetry, $\Pr_{\vec \xi}(A_{2t}) \ge 1 - 2\exp(-n t^2/2)$, where $t = M\nm/2$ and $M$ is chosen such that $(M/2)^2 \ge 2 C_3$; this fixes the value of the absolute constant $M$ used in \eqref{eq:a_brack}.

Applying \eqref{eq:a_brack} yields that if $\vec \xi \in A \subset \cE$ and $\|\vec \xi' - \vec \xi\| \le M \nm = 2t$, $\|\erm(\vec \xi) - \erm(\vec \xi')\|^2 \le \rho_S(\vec X, \ft)$ and so $\|\erm(\vec \xi') - f_c\| \le \nm + \sqrt{\rho_{S}(\vec X,\ft)}$. This implies
\[
    \Pr_{\vec \xi}(\{\erm \in B(f_c, \nm + \sqrt{\rho_{S}(\vec X,\ft)})\}|\vec X) \geq \Pr_{\vec \xi}(A_{2t}|\vec X) \geq 1-2\exp(-n t^2/2).
\]
To bound the variance of $\erm$, we use conditional expectation as in Theorem \ref{T:Rnd}. We have 
\begin{align*}
    V(\erm|\vec X) &\le \E\|\erm - f_c\|^2 \le (1-2\exp(-n t^2/2)) \cdot (\nm + \sqrt{\rho_{S}(\vec X,\ft)}))^2 + 2 C\exp(-n t^2/2),
\end{align*}
where we have used the fact that $\diam_{\PP}(\cF) = \Theta(1)$. Recalling that $\nms \gtrsim \log(n)/n$, and $t = \Theta(\nm)$, we have $\exp(-nt^2) = O(\nms)$ and hence the RHS is bounded by $C \max\{\nms, \rho_S(\vec X, \ft)\}$, as desired.

\subsection{Proof of Theorem \ref{T:RandomFull}}\label{ss:rndFull}
For simplicity, we assume that $c_X = c_L = c_I = O(1)$, We abbreviate $\rho_{\cO}:= \rho_{\cO}(n,\PP,\ft)$.

We shall use the joint metric on $\cX^n \times \R^n$ given by 
$$\Delta_n((\vec X_1, \vec \xi_1), (\vec X_2, \vec \xi_2)) := n^{-1/2} \cdot d_n(\vec X_1, \vec X_2) + \|\vec \xi_1 - \vec \xi_2\|_n.$$
As $(\vec X, d_n)$ and $(\vec \xi, \|\cdot\|_2)$ both satisfy Lipschitz concentration inequalities with parameter $\Theta(1)$, so does the product space $\cX^n \times \R^n$ with the usual product metric 
$$((\vec X_1, \vec \xi_1), (\vec X_2, \vec \xi_2)) \mapsto d_n(\vec X_1, \vec X_2) + \|\vec \xi_1 - \vec \xi_2\|_2,$$ 
and since $\Delta_n$ is obtained by scaling this metric by $n^{-1/2}$, we obtain that $(\cX^n \times \mathbb R^n, \Delta)$ satisfies an LCP condition with parameter $\Theta(n)$. 

Let $\cE_1$ be the event of Assumption $\ref{A:Interpolate}$, namely, the event that the ERM is almost interpolating, and let $\cE_2$ be the event that $\diam_{\PP}(\cO_{M'\nms}) \le \rho_{\cO}$. Since $\Pr(\cE_1) + \Pr(\cE_2) > 1 + \exp(-c_{I} n \nms)$, we have $\Pr(\cE_1 \cap \cE_2) > \exp(-c_{I}n \nms)$. 

Set $\cE_3 = \cE_1 \cap \cE_2$. Since $\Pr(\cE_3) \ge \exp(-c_{I}n \nms)$, the same pigeonhole principle argument used in the proofs of Theorems \ref{T:Fix} and \ref{T:RndLowerIso} shows that there exists an absolute constant $c_1 \in (0,c_{I})$ and $f_c \in \cF$ such that
\[
    \Pr_{\vec X,\vec \xi}(\cE_3 \cap \{\erm \in B(f_c,\nm)\}) \geq \exp(-c_1 n \nms).
\]
Denote this event by $\cE$ (in this case, it is better to think about it as a subset of $\cX^n \times \mathbb R^n$). By the same argument as in Theorem \ref{T:RndLowerIso}, $\tilde \cE := \cE_{C_1\nm}$ will be an event of probability $1 - \exp(-c_1 n \nms)$, where $A_r = \{(\vec X, \vec \xi) \in \cX^n \times \R^n: \Delta_n((\vec X, \vec \xi), A) \le r\}$ as above, and $C_1$ is an absolute constant depending on $c_1$ and the LCP parameter of $\Delta$. 

Thus, we would like to show that any $(\vec X, \vec \xi) \in \tilde \cE$ is not too far from $f_c$. More precisely, we claim that for any $(\vec X, \vec \xi)$ at distance at most $C_1 \nm$ from $\cE$, the corresponding $\erm$ is at distance at most $C_2 \cdot \max\{\nm,\sqrt{\rho_{\cO}}\}$ from $f_c$.

For $f \in \cF$, let $f_{\vec X}$ denote $(f(X_1), \ldots, f(X_n)) \in \R^n$. We claim that it suffices to prove the following: for every $(\vec X_1,\vec \xi_1) \in \cE$ and such that $\Delta_n((\vec X_2,\vec \xi_2),(\vec X_1,\vec \xi_1)) \leq C_1\nm$, we have
\begin{equation}\label{Eq:endgameRND}
    \|\erm(\vec X_2,\vec \xi_2)_{\vec X_1} - \erm(\vec X_1,\vec \xi_1)_{\vec X_1}\|_n \lesssim \nm,
\end{equation}
where $\erm(\vec X_i,\vec \xi_i)$ is the ERM for the input points $\vec X_i$ and noise $\vec \xi_i$,. Indeed, assuming \eqref{Eq:endgameRND} we have

\begin{equation}\label{Eq:erm_perturb}
     \|\erm(\vec X_2,\vec \xi_2)_{\vec X_1} - \vec Y_1\|_n^2 \leq 2\|\erm(\vec X_1,\vec \xi_1)_{\vec X_1} - \vec Y_1\|_n^2 + 2\|\erm(\vec X_2,\vec \xi_2)_{\vec X_1} - \erm(\vec X_1,\vec \xi_1)_{\vec X_1}\|_n^2 \lesssim \nms,
\end{equation}
as the first term on the RHS is bounded by $2\nm$ because $(\vec X_1,\vec \xi_1) \in \cE$, and the second term is bounded by $4\nms$ by construction. We now specify the constant $M'$ in the definition of $\rho_{\cO}$ (Definition \ref{Def:rho_O}) to be any upper bound for the implicit absolute constant in \eqref{Eq:erm_perturb}. Under this definition, \eqref{Eq:erm_perturb} implies that $\erm(\vec X_2,\vec \xi_2)_{\vec X_1} \in \cO_{M'\nms}$ and hence $\|\erm(\vec X_2,\vec \xi_2) - \erm(\vec X_1,\vec \xi_1)\|^2 \le \rho_{\cO}$. Since $\erm(\vec X_1,\vec \xi_1) \in \cE$, this implies that 
$$\|\erm(\vec X_2,\vec \xi_2) - f_c\|^2 \le 2(\|\erm(\vec X_2,\vec \xi_2) - \erm(\vec X_1,\vec \xi_1)\|^2 + \|\erm(\vec X_2,\vec \xi_2) - f_c\|^2 \lesssim \max\{\nms, \rho_\cO\}$$
as desired.

Thus, on the high-probability event $\tilde \cE$, $\widehat{f}_n \in B(f_c, C \max\{\nms,\rho_{O}\})$. As in the proof of Theorem \ref{T:Rnd}, one concludes by conditional expectation that $V(\erm) \le C \max \{\nms,\rho_{O}\}$.

\paragraph{Proof of \eqref{Eq:endgameRND}:} 
For convenience, denote $f_{i, j} = \erm(\vec X_i, \vec \xi_i)_{\vec X_j}$, and similarly $\ft_j = (\ft)_{\vec X_j}$. As $d((\vec X_2,\vec \xi_2), (\vec X_1,\vec \xi_1)) \le 2\nm$, we have by the Lipschitz property that $\|f_{i, 1} - f_{i, 2}\|_n \le 2\nm$ and also $\|\ft_1 - \ft_2 \|_n \le 2\nm$. In addition, letting $\vec Y_i = \ft_i + \vec \xi_i$ be the observation vector, the Lipschitz property of $\ft$ and the bound on $\|\vec \xi_1 - \vec \xi_2\|_n$ together imply that $\|\vec Y_1 - \vec Y_2\|_n \le 4\nm$. The definition of $f_{i, i}$ as the ERM with data points $\vec X_i$ and observations $\vec Y_i$ implies that for $i = 1, 2$, 
\begin{equation}\label{eq:erm_minim}
\|f_{i, i} - \vec Y_i\|_n \le \|f_{\vec X_i} - \vec Y_i\|_n
\end{equation}
for any $f \in \cF$. Finally, the almost interpolating assumption (Assumption \ref{A:Interpolate}) yields $\|\vec Y_1 - f_{1, 1}\|_n \le C \nm$. 

We obtain \eqref{Eq:endgameRND} by putting these bounds all together. Indeed, we have
\begin{align*}
\|f_{1, 1} - f_{2, 1}\|_n &\le \|f_{1, 1} - \vec Y_1\|_n + \|\vec Y_1 - \vec Y_2\|_n + \|\vec Y_2 - f_{2, 2}\|_n + \|f_{2, 2} - f_{2, 1}\|_n \\
&\le (C + 6)\nm + \|f_{2, 2} - \vec Y_2\|_n,
\end{align*}
and substituting $i = 2$, $f = f_1$ into \eqref{eq:erm_minim} yields 
\begin{align*}
\|f_{2, 2} - \vec Y_2\|_n &\le \|f_{1, 2} - \vec Y_2\|_n \\
&\le \|f_{1, 2} - f_{1, 1}\|_n + \|f_{1, 1} - \vec Y_1\|_n + \|\vec Y_1 + \vec Y_2\|_n \\
&\le (C + 6)\nm,
\end{align*}
so we finally obtain
$$\|f_{1, 1} - f_{2, 1}\|_n \le 2(C + 6)\nm \lesssim \nm,$$
as desired.

\subsection{Proof of Theorem \ref{T:RndA}}\label{S:RndA}
The proof strategy is identical to that of Corollary \ref{C:A}: use a fixed-point theorem to find a function $\ft$ for which $\ft = \E_{\vec X, \vec \xi} \erm$, for which we have $\E\|\erm - \ft\|^2  \le \sup_{\ft \in \cF} V(\erm)$. However, the infinite-dimensional random-design setting makes things a bit trickier.

For given $\ft, \vec X, \vec \xi$, let $F_{\vec X, \vec \xi}(\ft)$ denote the corresponding ERM (which we have previously denoted $\erm$). Recall that while the ERM is uniquely defined as a vector in $\cF_n$, its  lift to $\cF$ is in general far from unique. We will make two temporary assumptions to streamline the proof, and explain at the end of the proof how to remove them, at the cost of some additional technical complexity. First, we assume that $F_{\vec X, \vec \xi}(\ft)$ is the (unique) element of $\cF$ of minimal $L^2(\PP)$-norm mapping to the finite-dimensional ERM; second, we assume that for each $\vec X$, the minimal-norm lifting map, defined by
$$L_{\vec X}(v) = \argmin \{\|f\|: f \in \cF\,|\, v = (f(x_1), \ldots, f(x_n))\},$$
is continuous.

The map $F_{\vec X, \vec \xi}$ is the composition of the following maps:
\begin{equation*}
\begin{tikzcd}
\cF \arrow[r,"P_n"]  &
\cF_n \arrow[r,"v \mapsto P_{\cF_n}(v + \vec \xi)"] &[5em]
\cF_n \arrow[r, "L_{\vec X}"] &
\cF
\end{tikzcd}
\end{equation*}

where $P_n(f) = (f(x_1), \ldots, f(x_n))$, $P_{\cF_n}$ is the projection from $L^2(\PPf)$ onto the convex set $\cF_n$, which is the LSE in fixed design, and $L_{\vec X}$ is the lifting map defined above. The linear map $P_n$ is continuous by Assumption \ref{A:Eval}, and the map $v \mapsto P_{\cF_n}(v + \vec \xi)$ is continuous because projection onto a convex set is continuous. As we have assumed (for now) that $L_{\vec X}$ is continuous, this proves that for every $\vec X, \vec \xi$, $f \mapsto F_{\vec X, \vec \xi}(f)$ is a continuous map of the compact set $\cF$ to itself. 

We claim that the expectation of this map, $f \mapsto \E_{\vec X, \vec \xi}[F_{\vec X, \vec \xi}(f)]$, is also continuous: indeed, if $f_k \to f$ then  
$$\|F_{\vec X, \vec \xi}(f_k) - F_{\vec X, \vec \xi}(f)\| \to 0$$
for each $\vec X, \vec \xi$ and is bounded by the diameter $D_{\PP}(\cF)$, so Jensen's inequality and dominated convergence imply
\begin{align*}
    \|\E[F_{\vec X, \vec \xi}(f_k)] - \E[F_{\vec X, \vec \xi}(f)]\| &\le \E[\|F_{\vec X, \vec \xi}(f_k) - F_{\vec X, \vec \xi}(f)\|] \to 0
\end{align*}
which is continuity.

We can thus apply the Schauder fixed point theorem \cite[Theorem 17.56]{aliprantis06infinite}: 

\begin{theorem}Let $K$ be a nonempty compact convex subset of a Banach space, and let $f: K \to K$ be a continuous function. Then the set of fixed points of $f$ is compact and nonempty.
\end{theorem}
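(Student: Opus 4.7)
The plan is to prove the Schauder fixed point theorem by finite-dimensional approximation, reducing to the Brouwer fixed point theorem (which we take as given, as it is the hypothesis that makes this extension meaningful). The two claims --- nonemptiness and compactness of the fixed point set $\textup{Fix}(f) = \{x \in K : f(x) = x\}$ --- will be handled separately. Compactness is nearly immediate: $\textup{Fix}(f)$ is the preimage of the diagonal of $K \times K$ under the continuous map $x \mapsto (x, f(x))$, hence closed in $K$, and a closed subset of the compact set $K$ is itself compact. The substantive content is therefore nonemptiness.

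For nonemptiness, the strategy is to construct finite-dimensional approximations to $f$ via Schauder projections, apply Brouwer to each approximation, and then pass to a convergent subsequence using the compactness of $K$. Concretely, for each integer $m \geq 1$, compactness of $K$ furnishes a finite $(1/m)$-net $\{x_1^{(m)}, \ldots, x_{k_m}^{(m)}\} \subset K$. Define continuous bump functions $\phi_i^{(m)}(x) = \max\{0, 1/m - \|x - x_i^{(m)}\|\}$ and the associated Schauder projection
\[
P_m(x) \;=\; \frac{\sum_{i=1}^{k_m} \phi_i^{(m)}(x)\, x_i^{(m)}}{\sum_{i=1}^{k_m} \phi_i^{(m)}(x)}.
\]
Since $K$ is convex and each $x_i^{(m)} \in K$, the image $P_m(K)$ lies in the finite-dimensional compact convex set $K_m := \operatorname{conv}\{x_1^{(m)}, \ldots, x_{k_m}^{(m)}\} \subset K$, and a direct estimate gives $\|P_m(x) - x\| \leq 1/m$ for every $x \in K$.

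Next, consider the continuous map $g_m := P_m \circ f\restriction_{K_m} : K_m \to K_m$. Because $K_m$ is a nonempty compact convex subset of a finite-dimensional Euclidean space (after identifying $\operatorname{span}\{x_i^{(m)}\}$ with $\mathbb R^{k_m}$), Brouwer's fixed point theorem yields $y_m \in K_m$ with $g_m(y_m) = y_m$, i.e. $P_m(f(y_m)) = y_m$. Then
\[
\|f(y_m) - y_m\| \;=\; \|f(y_m) - P_m(f(y_m))\| \;\leq\; \frac{1}{m}.
\]
By compactness of $K$, the sequence $\{y_m\}$ admits a subsequence $y_{m_j} \to y^\ast \in K$; continuity of $f$ gives $f(y_{m_j}) \to f(y^\ast)$, while the bound above forces $f(y_{m_j}) - y_{m_j} \to 0$, so $f(y^\ast) = y^\ast$. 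This produces the desired fixed point and completes the proof modulo Brouwer.

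The main obstacle is really conceptual rather than technical: one must appeal to Brouwer's theorem, which itself is a nontrivial topological result (provable via Sperner's lemma, simplicial approximation, or degree theory), and one must carefully verify that the Schauder projection indeed maps into $K$ --- this uses convexity of $K$ in an essential way, since $P_m(x)$ is a convex combination of elements of $K$. Beyond this, the argument is a standard and routine approximation scheme, and since the excerpt simply cites the theorem from \cite{aliprantis06infinite}, the proposal above tracks the classical textbook proof.
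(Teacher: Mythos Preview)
Your proof is correct and is precisely the classical Schauder-projection argument. The paper itself does not prove this theorem at all --- it simply quotes it with a citation to \cite[Theorem 17.56]{aliprantis06infinite} and applies it as a black box --- so there is nothing to compare against; you have supplied the standard textbook proof where the paper chose to cite one.
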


The fixed point we obtain is a function $\ft \in \cF$ for which $\ft = \E[F_{\vec X, \vec \xi}(f)] = \E \erm$ and hence,
$$\E\|\erm - \ft\|^2 = \E \|\erm - \E\erm\|^2 \lesssim \cV(\erm,\cF,\PP).$$

This concludes the proof in the case that the lifting maps $L_{\vec X}$ are continuous.

Unfortunately, the assumption that the $L_{\vec X}$ are continuous turns out to be unjustified in general. Indeed, it is not difficult to construct an example of a convex set $K \subset \R^3$ for which the minimal-norm lift $P_{\R^2}(K) \to K$ is not continuous; in fact, one can construct $K \subset \R^3$ with no continuous section $P_{\R^2}(K) \to K$. So we need to explain how to proceed without this assumption.

Fortunately, each $L_{\vec X}$ is always continuous on the \textit{relative interior} of $\cF_n$ (we sketch the proof of this at the end of the section), so the following modification of $F_{\vec X, \vec \xi}$ does turn out to be continuous:

\begin{equation}\label{Eq:modERM}
\begin{tikzcd}
\cF \arrow[r,"P_n"]  &
\cF_n \arrow[r,"v \mapsto P_{\cF_n}(v + \vec \xi)"] &[5em]
\cF_n \arrow[r, "\varphi_\delta"] &
\cF_n \arrow[r, "L_{\vec X}"] &
\cF,
\end{tikzcd}
\end{equation}
where 
$$\varphi_{\delta}(v) = (1 - \delta) (v - v_0) + v_0$$
is simply a contraction of $\cF_n$ into a $(1 - \delta)$-scale copy of itself ($v_0$ is some arbitrarily chosen point in the interior of $\cF_n$).

Let $\tilde F_{\vec X, \vec \xi}$ denote the composition of the maps in \eqref{Eq:modERM}. By the argument above, $\tilde F_{\vec X, \vec \xi}$ is continuous and $\E[\tilde F_{\vec X, \vec \xi}]$ has a fixed point $\ft$.

Of course, $\ft$ is not a fixed point of $\E[F_{\vec X, \vec \xi}]$ as we would like. However, note that $\|\varphi_\delta(v) - v\|_n \le 2 \delta$ for any $v \in \cF_n$ (as the diameter of $\cF_n$ is at most $2$). Hence,  we have for any $v \in \cF_n$ that
$$\|L_{\vec X}(\varphi_\delta(v)) - L_{\vec X}(v)\|^2 \le 2\|\varphi_\delta(v) - v\|_n^2 + C\cI_{L}(n) \le 8\delta^2 + \cI_{L}(n)$$
on an event $\cE$ of high probability; in particular this holds for $v = P_{\cF_n}(P_n(\ft) + \vec \xi)$, which means that on $\cE$,
$$\|\tilde F_{\vec X, \vec \xi}(\ft) - F_{\vec X, \vec \xi}(\ft)\| \le 8\delta^2 + C\cI_{L}(n).$$
Choosing $\delta \lesssim \cI_{L}(n)$ and applying conditional expectation (using the fact that $\cE^c$ is negligible) and Jensen's inequality, we get that the $\ft$ thus obtained satisfies $\|\ft - \E \widehat{f}_n\| \lesssim \max\{\sup_{\ft \in \cF}V(\erm),\cI_{L}(n)\}$, which shows that the ERM is admissible for this $\ft$.

By the same argument, we may discard the assumption that the ERM is computed by finding the element of $\cF$ of minimal norm mapping to the finite-dimensional ERM $\widehat{f}_n^{(fd)}$: indeed, under the event $\cE$, the set of functions in $\cF$ mapping to $\widehat{f}_n^{(fd)}$ has diameter $C \cdot \cI_{L}(n)$, so changing the selection rule for the ERM will shift its expectation by a perturbation of norm at most $C \cdot \cI_{L}(n)$.

It remains to explain why the lifting map $L = L_{\vec X}: \cF_n \to \cF$ is continuous on the relative interior of $\cF_n$. Replacing the ambient space with the affine hull of $\cF_n$, we may assume $\cF_n$ has nonempty interior.

Suppose $v_k \to v$ in $\cF_n$ and $v \in \mathrm{int}\,\cF_n$; we wish to show that $L(v_k) \to f = L(v)$. As $\cF_n$ is compact, by passing to a subsequence we may assume $L(v_k)$ converges to some $g \in \cF$. Since $P_n$ is continuous, we have $v = P_n(L(v_k)) \to P_n(g)$, i.e., $g$ is a lift of $v$. Hence, by definition, $\|g\| \ge \|f\|$, and we wish to show that equality holds.

Suppose not. Then $\|g\| > \|f\|$ and hence $\|L(v_k)\| \ge \|f\| + \epsilon$ for all $k$ and some $\epsilon >  0$; that is, there exist $v_k$ arbitrarily close to $v$ whose minimal-norm lift has much larger norm than that of $v$. It suffices to show this is impossible (i.e., that $u \mapsto \|L(u)\|$ is upper semicontinuous at $v$). This follows from the fact that $u \mapsto \|L(u)\|$ is convex, as is easily verified, and a convex function is continuous on the interior of its domain \citep[Theorem 1.5.3]{schneider2014convex}; for completeness, we give a direct proof.

Since $v \in \mathrm{int}\,\cF_n$, there exists $r > 0$ such that $B(v, r) \subset \cF_n$. This implies that for any $\delta > 0$, one has
$$B(v, \delta) \subset v + \frac{\delta}{r} (\cF_n - v).$$

Let $D$ be the diameter of $\cF$ in $L^2(\PP)$. We have $\cF \subset B(f, D)$ and hence $\cF_n \subset P_n(B(f, D) \cap \cF)$. By linearity, this implies that
$$v + a(\cF_n - v) \subset P_n(B(f, a D) \cap \cF)$$
for any $a > 0$; choosing $a = \frac{\delta}{r}$ we obtain
$$B(v, \delta) \cap \cF_n \subset P_n\left(B\left(f, \frac{D\delta}{r}\right) \cap \cF\right).$$

In other words, if $\|u - v\|_n < \delta$ and $u \in \cF_n$, there exists an element of $\cF$ in $B(f, \frac{D\delta}{r})$ mapping to $u$, which in particular implies that $\|L(u)\| \le \|f\| + \frac{D\delta}{r}$. This means that $u \mapsto \|L(u)\|$ is upper semicontinuous at $v$, which was precisely what we needed in order to conclude that $L$ is continuous at $v$.

\subsection{Proof of Theorem \ref{C:Jagged}}\label{S:Jagged}
\paragraph{Preliminaries}
The following classical and standard results appear for example in \cite{vershynin2018high}.
\begin{lemma}\label{Lem:Maximal}[Maximal inequality]
    Let $Z_1,\ldots,Z_k$ be zero mean $\sigma$-sub-Gaussian random variables with bounded variance. Then, we have that
    \[
        \E \max_{1 \leq i \leq k} Z_i \lesssim \sigma \sqrt{\log k}.
    \]
\end{lemma}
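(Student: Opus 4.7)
The plan is to apply the classical exponential moment method (sometimes called the Chernoff--Cram\'er trick). For any $\lambda > 0$, Jensen's inequality applied to the convex function $x \mapsto e^{\lambda x}$ yields
\[
\exp\!\left(\lambda \, \E \max_{1 \le i \le k} Z_i\right) \le \E \exp\!\left(\lambda \max_{1 \le i \le k} Z_i\right).
\]
Since the exponential is monotone increasing, $\exp(\lambda \max_i Z_i) = \max_i \exp(\lambda Z_i)$, and I would then upper bound this maximum by the sum of the non-negative terms, giving $\E \max_i \exp(\lambda Z_i) \le \sum_{i=1}^k \E \exp(\lambda Z_i)$.

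By the definition of a $\sigma$-sub-Gaussian random variable, each moment generating function satisfies $\E \exp(\lambda Z_i) \le \exp(\lambda^2 \sigma^2 / 2)$. Substituting and taking logarithms gives
\[
\E \max_{1 \le i \le k} Z_i \le \frac{\log k}{\lambda} + \frac{\lambda \sigma^2}{2}.
\]
Optimizing in $\lambda$ by choosing $\lambda = \sqrt{2 \log k}/\sigma$ balances the two terms and yields $\E \max_i Z_i \le \sigma \sqrt{2 \log k}$, which is the claimed bound up to an absolute constant.

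There is essentially no serious obstacle: this is a textbook result. The only minor points requiring attention are (i) the convention for ``$\sigma$-sub-Gaussian,'' which I would take to mean $\E \exp(\lambda Z) \le \exp(\lambda^2 \sigma^2 / 2)$ for all $\lambda \in \mathbb{R}$ (under this convention the bounded-variance hypothesis is automatic, since $\Var(Z) \le \sigma^2$), and (ii) restricting to $\lambda > 0$ so that the monotonicity step is valid and the Jensen inequality runs in the correct direction. Because we only need one-sided control of $\max_i Z_i$, only the upper tail of the sub-Gaussian MGF is used, and no independence between the $Z_i$ is required.
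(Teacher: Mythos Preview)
Your argument is correct and is exactly the standard exponential-moment proof found in the reference the paper cites (Vershynin's \emph{High-Dimensional Probability}); the paper itself does not give a proof of this lemma, merely citing it as ``the maximal inequality.'' There is nothing to compare.
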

% Next, we state the classical Dudley's lemma (cf. \cite{vershynin2018high}):
\begin{lemma}[Dudley's lemma] \label{Lem:Dudley}
The following holds for all $\eps \in (0,1)$:
    \begin{equation}
    \E  \sup_{f_i \in \cN_{\eps}} \Xf{f_i - f^{*}} \leq \frac{C_4}{\sqrt{n}}\int_{\eps}^{\mathrm{Diam}_{\PPf}(\cF)}\sqrt{\log \cN(u,\cF,\PPf)} du,
\end{equation}
where $\cN_\epsilon$ denotes the  minimal $\epsilon$-net of $\mathcal F$ in terms of $L_2(\PPf)$. 
\end{lemma}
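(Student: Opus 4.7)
The plan is to exploit the non-Donsker structure of the class $\cF$ (Assumption \ref{A:ND}) to exhibit, with the stated high probability over $\vec\xi$, a function $f \in \cF$ which is within $L_2(\PPf)$-distance $O(\nm)$ of $\erm$ yet whose empirical loss exceeds that of $\erm$ by $\omega(\nms)$. The starting point is the projection identity
\begin{equation*}
\|f - Y\|_n^2 - \|\erm - Y\|_n^2 = \|f - \erm\|_n^2 + 2\langle f - \erm, \erm - Y\rangle_n,
\end{equation*}
whose right-hand side is nonnegative for every $f \in \cF$ by the first-order optimality condition at $\erm$. For $f$ with $\|f - \erm\|_n \lesssim \nm$ the first summand is $O(\nms)$, so the containment \eqref{Eq:admsta} will fail as soon as we can produce such an $f$ with $\langle \erm - f, \vec\xi\rangle_n = \omega(\nms)$.

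For the choice of $\ft_n$ I would invoke Corollary \ref{C:WAF} (weak admissibility) to obtain a sequence with $\E\|\erm - \ft_n\|_n^2 \lesssim \nms$; Theorem \ref{T:Fix} then upgrades this to the concentration statement $\|\erm - \ft_n\|_n \lesssim \nm$ outside an event of probability $2\exp(-cn\nms)$. On this event, Cauchy--Schwarz controls the deterministic part $|\langle f - \erm, \erm - \ft_n\rangle_n| \lesssim \nms$ uniformly over $f \in B_n(\erm, C\nm)$, reducing the task to producing a function in $B_n(\erm, C\nm)$ with large inner product against $\vec\xi$.

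The key estimate is that the small-scale (sub-$\nm$) oscillation of the empirical process is $\omega(\nms)$. Indeed, Dudley's inequality (Lemma \ref{Lem:Dudley}) applied to the minimal $\nm$-net $\cN_{\nm}$ of $\cF$, combined with the macroscopic entropy bound of Assumption \ref{A:ND}, yields $\E \sup_{g \in \cN_{\nm}} \Xf{g} = O(\nms)$; combined with $\E \sup_{f \in \cF} \Xf{f} = \omega(\nms)$ (the first clause of Assumption \ref{A:ND}), the chaining decomposition $f = \pi_{\nm}(f) + (f - \pi_{\nm}(f))$, where $\pi_{\nm}$ is the nearest-point projection onto $\cN_{\nm}$, forces
\begin{equation*}
\E \sup_{f \in \cF} \Xf{f - \pi_{\nm}(f)} = \omega(\nms).
\end{equation*}
This supremum is an $n^{-1/2}$-Lipschitz functional of $\vec\xi$, so Gaussian concentration \eqref{Eq:CCP} upgrades the expectation bound to one holding with probability at least $1 - 2\exp(-c_2 n\nms)$, producing some $f^\dagger \in \cF$ with $\|f^\dagger - \pi_{\nm}(f^\dagger)\|_n \leq \nm$ and $\Xf{f^\dagger - \pi_{\nm}(f^\dagger)} = \omega(\nms)$.

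The main obstacle, where I expect the technical work to concentrate, is transferring this small-scale blow-up---a priori centered at the random point $\pi_{\nm}(f^\dagger)$---into one centered at $\erm$. I would address this by refining the chaining above to establish the localized analogue
\begin{equation*}
\E \sup_{f \in B_n(\ft_n, C\nm)} \Xf{f - \ft_n} = \omega(\nms),
\end{equation*}
using that the macroscopic Dudley tail from outside $B_n(\ft_n, C\nm)$ contributes only $O(\nms)$, so the non-Donsker excess must already live inside the ball. Together with the high-probability inclusion $B_n(\ft_n, C\nm) \subset B_n(\erm, 2C\nm)$, this produces the required $f$, and substituting back into the projection identity yields $\|f - Y\|_n^2 - \|\erm - Y\|_n^2 = \omega(\nms)$, and hence the failure of \eqref{Eq:admsta} for some $C_n = \omega(1)$. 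The delicate point is ensuring that the non-Donsker entropy excess genuinely localizes around $\ft_n$ rather than being dispersed over macroscopically distant regions of $\cF$; this reflects the defining property of $\nm$ as the scale at which global and local entropic behavior separate.
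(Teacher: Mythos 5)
Your proposal does not prove the statement at hand. The statement is Lemma~\ref{Lem:Dudley}, the classical Dudley entropy-integral bound: for every $\eps \in (0,1)$, the expected supremum of the process $\Xf{f_i - f^*}$ over the minimal $\eps$-net $\cN_\eps$ is controlled by $\frac{C_4}{\sqrt n}\int_\eps^{\diam_{\PPf}(\cF)}\sqrt{\log \cN(u,\cF,\PPf)}\,du$. What you have written instead is a proof sketch of Theorem~\ref{C:Jagged} (the failure of the containment \eqref{Eq:admsta} for non-Donsker classes): you invoke weak admissibility to choose $\ft_n$, use the projection/optimality identity at $\erm$, and argue that the sub-$\nm$ oscillation of the empirical process is $\omega(\nms)$. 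Indeed, within your own argument you cite ``Dudley's inequality (Lemma~\ref{Lem:Dudley})'' as a black-box ingredient --- which is exactly the statement you were asked to prove. In the paper this lemma is stated as a classical result from empirical process theory (with a reference to Vershynin's book) and is not re-derived; a self-contained proof would be the standard chaining argument: fix nets $\cN_{2^{-k}}$ at dyadic scales between $\eps$ and $\diam_{\PPf}(\cF)$, write each $f_i \in \cN_\eps$ as a telescoping sum of increments $\pi_k(f_i) - \pi_{k+1}(f_i)$ along nearest-point projections, note that each increment process $\Xf{\pi_k(f_i) - \pi_{k+1}(f_i)}$ is sub-Gaussian with parameter of order $2^{-k}/\sqrt n$, apply the maximal inequality (Lemma~\ref{Lem:Maximal}) at each scale with $\log$ of the product of the two net cardinalities, and sum the resulting terms $2^{-k}\sqrt{\log \cN(2^{-k},\cF,\PPf)}/\sqrt n$, which is comparable to the integral; the integral is truncated at $\eps$ precisely because the supremum runs only over $\cN_\eps$, so the chain terminates at that scale with no residual approximation error.

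Separately, even judged as a sketch of Theorem~\ref{C:Jagged}, your argument diverges from the paper's and has a soft spot you yourself flag: you need the localized lower bound $\E\sup_{f \in B_n(\ft_n, C\nm)}\Xf{f - \ft_n} = \omega(\nms)$, and deducing this from the global non-Donsker condition requires knowing that the entropic excess concentrates near the \emph{particular} $\ft_n$ you chose. The paper resolves exactly this issue by \emph{constructing} $\ft$ as the net point whose $\nm$-ball maximizes the expected local supremum (Eq.~\eqref{Eq:ERMAD}), and then proving admissibility of that specific $\ft$ via the concavity of Chatterjee's functional $\Psi_{\vec\xi}$, together with a sign-flip ($\vec\xi \mapsto -\vec\xi$) argument to produce the nearby function with large empirical loss. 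Choosing $\ft_n$ abstractly through Corollary~\ref{C:WAF}, as you do, gives no control on the local oscillation around it, so that step of your plan does not go through as stated.
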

\begin{lemma}[Sudakov's minoration lemma] \label{Lem:Sudakov}
The following holds for all $\eps \in (0,1)$:
    \begin{equation}
    \E_{\vec \xi}  \sup_{f \in \cF} \Xf{f} \gtrsim \sup_{\eps \geq 0}\eps \sqrt{\frac{\log \cN(\eps,\cF,\PPf)} {n}}.
\end{equation}
\end{lemma}
\begin{proof}[Proof of Theorem \ref{C:Jagged}]
Throughout this proof, we fix a realization in $\PPf = \PP_n$ that satisfies $\cI_{L}(n) = o(\nms)$ and $\cI_{U}(n) = O(\nms)$. For such $\PPf$, note that 
\[
    \log \cN(\eps,\cF,\PP) \asymp \log \cN(\eps,\cF,\PPf) \quad \forall \eps \in (\nm ,\Gamma),
\]
we will use the last equation in various places in this proof.

We start by finding a weakly admissible $\ft$ by a more constructive method than that used in the proof of Corollary \ref{C:A} (the method here is closer to the original proof of \cite{chatterjee2014new}). Then, we   ove \eqref{Eq:admsta} for this choice of $\ft$.

Let $\cN:= \{f_1,\ldots,f_{\cN(\eps,\cF,\PPf)}\}$ be a minimal $\nm:=\eps_{*}(n)$-net of $\cF$ in terms of $L_2(\PPf)$, and denote $B(f_i):= B_n(f_i,\nm)$, $i=1,\ldots, |\cN|$. Our weakly-admissible $f^{*} \in \cF$ is defined as
\begin{equation}\label{Eq:ERMAD}
    f^{*}:= \argmax_{f_i \in \cN}\E \sup_{f \in B(f_i)}\Xf{f - f_i}.
\end{equation}

\begin{lemma}\label{Lem:AdEv}
The following event holds with probability (over $\vec \xi$) of at least $1-2\exp(-cn\nms)$ 
\begin{equation}\label{Eq:Ad1}
    \forall i \in [|\cN|] \quad \abs*{\sup_{f \in B(f_i)}\Xf{f-f_i} - \E \sup_{f \in B(f_i)}\Xf{f-f_i}} \leq C_1\nms.
\end{equation}
In addition, for a fixed $j \in 1,\ldots,\lceil\nm^{-1}\rceil$, the event
\begin{equation}\label{Eq:Ad2}
\begin{aligned}
    &\sup_{f_i \in \cN \cap B(f^{*}, j\nm)}\Xf{f_i - f^{*}} \leq  C_2 j\nms.
\end{aligned}
\end{equation}
holds with probability of at least $1 -2\exp(-c_3n\nms)$.
\end{lemma}
\noindent{The proof of this lemma appears below. We denote the event of \eqref{Eq:Ad1} by $\cE$, and by $\cE(j)$ the event of \eqref{Eq:Ad2}}. Following \cite{chatterjee2014new}, we define 
$$\Psi_{\vec \xi}(t) = \sup_{f \in B_{n}(f^{*},t)}2\Xf{f-f^{*}} - t^2.$$
One easily verifies (see \cite[Proof of Theorem 1.1]{chatterjee2014new}) that $\Psi_{\vec \xi}(t)$ is strictly concave and that $\argmax_{t \geq 0} \Psi_{\vec \xi}(t) = \|\ft - \erm\|_n^2$. 

This implies that if, for any particular $\vec \xi$, we identify $t_1, t_2$ such that $\Psi_{\vec \xi}(t_1) > \Psi_{\vec \xi}(t_2)$, the unique maximum of $\Psi_{\vec \xi}$ occurs for some $t$ smaller than $t_2$, i.e., $\|\ft - \erm\|_n \le t_2$. We will take $t_1 = \epsilon_*$ and $t_2 = D \epsilon_*$ for a sufficiently large constant $D$ and show that $\Psi_{\vec \xi}(t_1) > \Psi_{\vec \xi}(t_2)$ on $\mathcal E \cap \mathcal E(D)$. This implies that $\|\ft - \erm\|_n \le D\epsilon_*$ on $\mathcal E \cap \mathcal E(D)$, which precisely means that $\erm$ is admissible for $\ft$.

On the one hand, conditioned on $\cE$ we have
\begin{equation}\label{Eq:T1}
\begin{aligned}
    \Psi_{\vec \xi}(\nm) &=  \sup_{f \in B(\ft)}2\Xf{f-\ft} - \nms 
    \\&\geq \max_{f_i \in \cN}\E \sup_{f \in B(f_i)}2\Xf{f-\ft} - C_1\nms,
\end{aligned}
\end{equation}
where we used the definition of $\ft$ and Eq. \eqref{Eq:Ad1} above.
On the other hand, for $D \geq 2$ we have under $\cE(D) \cap \cE$ that
\begin{align*}
   \sup_{f \in B_n(\ft,D\nm)} \Xf{f-\ft} &\leq \max_{ f_i\in \cN \cap  B(\ft,D\nm)}\sup_{f  \in B(f_i)}\Xf{f-f_{i}}+ \sup_{ f_i\in \cN \cap  B(\ft,D\nm)}\Xf{f_i - \ft}
   \\&  \leq \sup_{f \in B(\ft,\nm)}\Xf{f-\ft} + C_2 D\nms,
\end{align*}
where we used the definition of $\ft$ and \eqref{Eq:Ad2}. Substituting in the definition of $\Psi$, we obtain
\begin{align*}
    \Psi_{\vec \xi}(D\nm) &= \sup_{f \in B_n(\ft,D\nm)}2\Xf{f-\ft} - D^2\nms \\
    & \leq 2\left(\sup_{f \in B(\ft)}\Xf{f-\ft} +C_2 D\nms\right) - D^2\nms \\
    &\leq  \Psi_{\vec \xi}(\nm) + (2 C_2+1)D\nms - D^2\nms.
\end{align*}
Comparing with \eqref{Eq:T1} we see that for $D \ge 2C_2 + C_1 + 1$ (say) we have $\Psi_{\vec \xi}(D\nm) < \Psi_{\vec \xi}(\nm)$ on $\mathcal E \cap \mathcal E(D)$. Since $\cI_{L}(n) = o(\nms)$,we obtain that 
\[
    \| \erm - \ft\|^2 \leq 4\|\erm - \ft\|_{n}^2 + o(\nms) \lesssim \nms,
\]
where we used that ,and therefore weakly admissible in $L_2(\PP)$.

Now, we are ready to prove \eqref{Eq:admsta}. First, we apply Sudakov's inequality, and note that
\begin{equation}\label{Eq:SudApp}
    \E_{\vec \xi}  \sup_{f \in \cF} \Xf{f - \ft} \gtrsim \sup_{\eps \geq 0}\eps \sqrt{\frac{\log \cN(\eps,\cF,\PPf)} {n}} \gtrsim \sup_{\eps \gtrsim \sqrt{\cI_{L}(n)}}\eps \sqrt{\frac{\log \cN(\eps,\cF,\PP)} {n}} \gg \nms,
\end{equation}
where we used that $\cI_{L}(n) = o(\nms)$ and that $\frac{\eps^2}{\log(1/\eps)} \cdot \log\cN(\eps,\cF,\PP)$ is decreasing in $\eps \in (0,\Gamma)$. 
% Using LCP and that $\cI_{U}(n) = O(\nms)$ (as we did in Lemma \ref{Lem:AdEv} below), one can easily obtain that
% \[
%     \sup_{f \in \cF} \Xf{f - \ft} = \omega(\nms)
% \]
% with probability of at least $1-\exp(-cn\nms)$.
 We first claim that with probability $1 - 2 \exp(-cn \nms)$,
\begin{equation}\label{Eq:GCC}
     \sup_{f \in B(f^{*})-\ft}\Xf{f} = \omega(\nms),
\end{equation}
where $\ft$ is our admissible function. 
% To prove this, we first apply the basic inequality \citep{van2000empirical}, which implies that 
% \[
% \cR(\erm,\cF,\PPf) \leq \E\sup_{f \in \cF} 2\Xf{f - \ft},\]
% where we recall that the LHS denotes the risk of $\erm$, i.e. the maximal $L_2(\PPf)$ error that $\erm$ can attain over $\cF$. By assumption, $\cR_n(\erm,\cF,\PPf) = \omega(\nms)$.
 To see this, by Lemma \ref{Lem:Dudley} and \eqref{Eq:ERMAD}
\begin{equation*}\label{Eq:ScoreAd}
\begin{aligned}
     \omega(\nms) = \E \sup_{f \in \cF}\Xf{f - \ft} &\leq \max_{f_i \in \cN \cap B_n(\ft,\Ef)}\E \sup_{f\in B(f_i)}\Xf{f-f_i} + \E \max_{f_i\in \cN}\Xf{f_i-\ft} 
     \\&\leq \E \sup_{f \in B(f^{*})}\Xf{f-\ft} + \frac{C_1}{\sqrt{n}}\int_{\nm}^{\Gamma}\sqrt{\log \cN(t,\cF,\PP)}dt
     \\&\leq \E \sup_{f \in B(f^{*})}\Xf{f-\ft} + \frac{C_2}{\sqrt{n}}\int_{\nm}^{\Gamma}\sqrt{\log \cN(t,\cF,\PPf)}dt
     \\&\leq \E \sup_{f \in B(f^{*})}\Xf{f-\ft} +O(\nms),
\end{aligned}
\end{equation*}
where we used \eqref{Eq:SudApp} and Lemma \ref{Lem:Dudley} and that $\cI_{L}(n) = o(\nms)$ and $\cI_{U}(n) = \Theta(\nms)$. Hence, 
\begin{equation}\label{Eq:GCC_exp}
\E \sup_{f \in B(f^{*})}\Xf{f-\ft} = \omega(\nms) - O(\nms) = \omega(\nms).
\end{equation}
This gives us a lower bound for $\sup_{f \in B(f^{*})}\Xf{f-\ft}$ in expectation, and high-probability bound follows from the proof of Lemma \ref{Lem:AdEv} below; so we only sketch it: $\sup_{f \in B(\ft)} G_{f - \ft}$ is convex and $O(\epsilon_* n^{-1/2})$-Lipschitz, which means that it deviates from its expectation by $\epsilon_*^2$ with probability at most $2 \exp(-cn \nms)$. Combining this with \eqref{Eq:GCC_exp} proves \eqref{Eq:GCC}.

Let $\cV$ denote the set of noise vectors for which $\|\widehat{f}_n - \ft\|_n \le C \nm$ and $\sup_{f \in B(\ft)}\Xf{f- \ft} = \omega(\nms)$; by what we have already proven, we have 
$$\Pr(\vec{\vec \xi} \in \cV) \ge 1 - C \exp(-cn \nms).$$
Let $\cV' = \cV \cap (-\cV) = \{\vec{\vec \xi}: \vec \xi, -\vec \xi \in \cV\}$. By the union bound,
$$\Pr(\vec{\vec \xi} \in \cV') \ge 1 - 2C \exp(-cn \nms).$$
Fix any $\vec{\vec \xi} \in \cV'$, and denote by $\widehat{f}_n^-$ the ERM with the flipped noise vector $-\vec{\vec \xi}$: 
\[
\erm^{-} := \argmin_{f \in \cF} \left(\sum_i (-\xi_i + \ft(x_i) -f(x_i))^2\right).
\]
Since $\vec{\vec \xi}, -\vec{\vec \xi} \in \cV' \subset \cV$, we have 
$$\|\widehat{f}_n - \widehat{f}_n^-\|_n \le \|\widehat{f}_n - \ft\|_n + \|\widehat{f}_n^- - \ft\|_n  \le C \nm.$$

In other words, $\widehat{f}_n^- \in B_n(\widehat{f}_n, C\nm)$, so to prove \eqref{Eq:admsta}, it thus suffices to show that $\widehat{f}_n^-$ is not a $\delta$-approximate minimizer for $\delta = \omega(\nms)$ with respect to the noise $\vec{\vec \xi}$, i.e.,
$$\frac{1}{n}\sum_{i=1}^{n}(\ft(x_i) + \xi_i - \widehat{f}_n^-(x_i))^2  \geq  \frac{1}{n}\sum_{i=1}^{n}(\ft(x_i) + \xi_i - \erm(x_i))^2 + \omega(\nms).$$
Equivalently, (by subtracting $\|\vec \xi\|^2_n$ from both sides as in \eqref{E:Erm}), we wish to prove that
$$-2\Xf{\widehat{f}_n^- -f^{*}} + \|\widehat{f}_n^- - f^{*}\|_{n}^2 \ge -2\Xf{\erm -f^{*}} + \|\erm - f^{*}\|_{n}^2 + \omega(\nms).$$
Since $\|\widehat{f}_n^- - f^{*}\|_{n}^2, \|\widehat{f}_n^- - f^{*}\|_{n}^2 = O(\nms)$ as $\vec \xi, -\vec \xi \in \cV$, this reduces to showing that 
\begin{equation}\label{Eq:xf_xi_flip}
-2\Xf{\widehat{f}_n^- -f^{*}} \ge -2\Xf{\erm -f^{*}} + \omega(\nms).
\end{equation}

On the one hand, by using Eqs. \eqref{Eq:T1} and \eqref{Eq:GCC} above it is easy to see that on $\cV'\subset \cV$, we have  $\Xf{\erm -f^{*}}\ge \omega(\nms)$. On the other hand, 
\begin{equation}\label{Eq:xf_xi_flip2}
\Xf{\widehat{f}_n^- - \ft} = \frac{1}{n} \langle \erm - \ft, \vec \xi\rangle = -\frac{1}{n}\langle \widehat{f}_n^- - \ft, -\vec \xi\rangle.
\end{equation}
But note that $\frac{1}{n}\langle \widehat{f}_n^- - \ft, -\vec \xi\rangle$
is the  process for the noise vector $-\vec{\vec \xi}$ evaluated at the corresponding ERM, namely $\widehat{f}_n^-$, and we have $-\vec \xi \in \cV'$, which implies that 
$$\frac{1}{n}\langle \widehat{f}_n^- - \ft, -\vec \xi\rangle \ge \omega(\nms).$$ Combining these last two inequalities we see that \eqref{Eq:xf_xi_flip} indeed holds over all $\cV'$, which implies that \eqref{Eq:admsta} holds on $\cV'$, as desired.
\end{proof}

 It remains to prove Lemma \ref{Lem:AdEv}.
 
\begin{proof}[Proof of Lemma \ref{Lem:AdEv}]
First, define  
\[
F_i(\vec \xi):= 2 n^{-1}\sup_{f \in B(f_i)}\sum_{k=1}^{n}(f-f_i)(x_k)\cdot \xi_k.
\]
Since $\|f - f_i\|_n \le 2\|f - f_i\| + O(\nm) = O(\nm)$ for all $f \in B(f_i)$, we see that $F_i(\cdot)$ is a $O(\nm n^{-1/2})$-Lipschitz and also convex function (with respect to the usual Euclidean norm on $\R^n$). Hence, we apply \eqref{Eq:CCP} and obtain
\begin{equation}
    \Pr_{\vec \xi} \crl*{\left|\sup_{f \in B(f_i)} \Xf{f-f_i}-\E \sup_{f \in B_n(f_i)} \Xf{f-f_i}\right| \geq t} \leq 2\exp(-cnt^2/\nms).
\end{equation}
Therefore, by taking a union bound over $1 \leq i \leq |\cN|$
\[
 \Pr_{\vec \xi}\crl*{\forall i \in [|\cN|]: \left|\sup_{f \in B_n(f_i)} \Xf{f-f_i}-\E \sup_{f \in B_n(f_i)} \Xf{f-f_i}\right| \geq t} \leq 2\exp(-cnt^2/\nms+\log|\cN|).
\]
Now, recall the definition of $\cN := \cN(\nm,\cF,\PPf)$ and that by the definition of the minimax rate, 
\[
\log \cN /n \asymp  \nms. 
\] 
This allows us to choose $t = C\nms$ (for large enough $C > 0$) such that with probability of at least $1-2\exp(-cn\nms)$, the following holds: 
\begin{equation}
\forall i \in [|\cN|]:  \left|\sup_{f \in B_n(f_i)} \Xf{f-f_i}-\E \sup_{f \in B_n(f_i)} \Xf{f-f_i}\right| \leq C\nm \sqrt{\log |\cN|/n} \leq C_1\nms,
\end{equation}
which proves \eqref{Eq:Ad1}.

For the second part of the lemma, fix $j \geq 1$, and define
\[
    F_j(\vec \xi) = \sup_{f \in \cN \cap B_n(f^{*},j\nm)} 2\Xf{f_j - \ft}.
\]
Again, it is easy to verify that $ F_j(\cdot)$ is convex and $2j\nm n^{-1/2}$-Lipschitz.  Using \eqref{Eq:CCP} once again, we obtain that
\[
\Pr\crl*{\left|\sup_{f \in \cN \cap B_n(f^{*},j\nm)} \Xf{f-f^{*}} - \E  \sup_{f \in \cN \cap B_n(f^{*}, j\nm)} \Xf{f-f^{*}}\right| \geq t} \leq 2\exp(-cn(t/j)^2/\nms).\] 
Choosing $t \sim j\nm/\sqrt{n} \lesssim j\nms$, we obtain 
\begin{equation}\label{Eq:sup_balls_conc}
\Pr\crl*{\left|\sup_{f \in \cN \cap B_n(f^{*},j\nm)} \Xf{f-f^{*}} - \E  \sup_{f \in \cN \cap B_n(f^{*},j\nm)} \Xf{f-f^{*}}\right| \geq j \nms} \leq 2\exp(-c\nms).
\end{equation}

 Next, by applying the maximal inequality (Lemma \ref{Lem:Maximal}) over $|\cN|$ random variables, and the definition of the minimax rate,
 \begin{equation}\label{Eq:sup_balls_exp}
    \E  \sup_{f \in \cN \cap B_n(f^{*},j\nm)}\Xf{f-f^{*}} \leq Cj\nm\sqrt{\log |\cN|/n} \leq C_1j\nms. 
 \end{equation}
 Combining \eqref{Eq:sup_balls_conc} and \eqref{Eq:sup_balls_exp} yields \eqref{Eq:Ad2}, concluding the proof.
\end{proof}
% \section{On The Diameter Assumption}\label{S:Diameter}
% \paragraph{Theorem \ref{T:Fix}:}
% \paragraph{Theorem \ref{T:UB}:} The proof of Theorem \ref{T:UB} does not used the diameter Assumption.
% \paragraph{Corollary \ref{C:A}:} The fixed point argument will not work, for example when $\cF$ is not compact. To replace it, consider the function
% \[
%      \ft \mapsto \E_{\cD}\left[ \max_{f \in \cF}\left(2\langle \vec \xi, f - \ft  \rangle_n - \|f - \ft\|_{n}^2\right)\right]
% \]
% and define $\ft \in \cF$ as the function that maximizes this function.
% Is not hard to verify that on this $\ft$, $B^2(\erm) \leq C \cdot V(\erm)$, where . Hence, we may repeat the proof of Corollary \ref{C:A}.
\section{Loose Ends}\label{S:Loose}
\begin{lemma}\label{L:Stable}
    Under Assumption \ref{A:Convex} the following holds: $\rho_{S}(\vec X) \lesssim \max\{\cI_{L}(\vec X),\nms\}$ for any realization $\vec X$.
\end{lemma}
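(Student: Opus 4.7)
The key observation is that the map $\vec\xi \mapsto \erm(\vec X, \vec\xi)$ is $1$-Lipschitz with respect to $\|\cdot\|_n$ on \emph{both} sides, and this gives a \emph{deterministic} (not merely high-probability) stability bound which, combined with the lower isometry remainder, translates into an $L_2(\PP)$ stability bound.

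The plan is the following. Fix a realization $\vec X$ and an underlying regression function $\ft \in \cF$, and pick arbitrary noise vectors $\vec\xi, \vec\xi' \in \R^n$. The corresponding observation vectors are $\vec Y = \ft(\vec X) + \vec\xi$ and $\vec Y' = \ft(\vec X) + \vec\xi'$, which satisfy $\|\vec Y - \vec Y'\|_n = \|\vec\xi - \vec\xi'\|_n$. The ERM values $\erm(\vec X, \vec\xi)$ and $\erm(\vec X, \vec\xi')$, viewed as vectors in $\cF_n := \{(f(X_1),\ldots,f(X_n)):f\in\cF\} \subset \R^n$, are the projections of $\vec Y$ and $\vec Y'$ onto the closed convex set $\cF_n$. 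Since projection onto a closed convex set is a contraction (this same property was used in the proof of Theorem \ref{T:Fix}), we obtain
\[
\|\erm(\vec X, \vec\xi) - \erm(\vec X, \vec\xi')\|_n \leq \|\vec\xi - \vec\xi'\|_n.
\]

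The second step is to pass from the empirical norm $\|\cdot\|_n$ to the population norm $\|\cdot\|$ via the definition of $\cI_L(\vec X)$. Rearranging the inequality in Definition \ref{D:BrackL} gives, for any $f, g \in \cF$,
\[
\|f - g\|^2 \leq 2\|f-g\|_n^2 + 2\cI_L(\vec X).
\]
Applying this with $f = \erm(\vec X, \vec\xi)$ and $g = \erm(\vec X, \vec\xi')$ (and any fixed selection rule for lifting ERM values on $\vec X$ to elements of $\cF$), and restricting to $\vec\xi'$ with $\|\vec\xi'-\vec\xi\|_n \leq M\nm$, we obtain
\[
\|\erm(\vec X, \vec\xi) - \erm(\vec X, \vec\xi')\|^2 \leq 2M^2 \nms + 2\cI_L(\vec X) \lesssim \max\{\cI_L(\vec X),\nms\}.
\]

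The final step is simply to note that this estimate holds for \emph{every} $\vec\xi \in \R^n$. Consequently, the event appearing in \eqref{eq:a_brack} with $\delta(n) := 2M^2\nms + 2\cI_L(\vec X)$ is all of $\R^n$, whose probability is $1 \geq \exp(-c_2 n \nms)$; hence $\rho_S(\vec X, \ft) \leq \delta(n) \lesssim \max\{\cI_L(\vec X), \nms\}$. Taking the supremum over $\ft \in \cF$ gives the claimed bound on $\rho_S(\vec X) = \sup_{\ft \in \cF}\rho_S(\vec X, \ft)$. There is no real obstacle here: the definition of $\rho_S$ requires the stability event to have probability only at least $\exp(-c_2 n \nms)$, but the contraction property of projection combined with $\cI_L(\vec X)$ already gives the desired bound for every single $\vec\xi$.
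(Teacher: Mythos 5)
Your proof is correct and follows the same route as the paper's: both use the fact that the ERM, as a metric projection onto the convex set $\cF_n$, is $1$-Lipschitz in the noise (deterministic empirical-norm stability), then pass to the $L_2(\PP)$ norm via the rearranged lower-isometry inequality $\|f-g\|^2 \le 2\|f-g\|_n^2 + 2\cI_L(\vec X)$, and finally observe that the bound holds for every $\vec\xi$, so the probability condition in Definition~\ref{D:Brack} is trivially satisfied. Your write-up is more explicit than the paper's (which merely invokes ``any $1$-Lipschitz estimator'') but the argument is identical.
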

\begin{proof}
    It is always the case that $\rho_{S}(\vec X) \leq \max\{\cI_{L}(\vec X),\nms\}$ for any $\vec X$. Indeed, if  \eqref{eq:a_brack} holds, then for any estimator $\bar f_n$ such that $\vec \xi \mapsto \bar f_n(\vec \xi)$ is $1$-Lipschitz, $\|\vec \xi - \vec \xi'\|_{n} \lesssim \nm$ implies that
\[
    \|\bar{f}_n(\vec \xi') - \bar{f}_n(\vec \xi)\|^2 \leq 2(\|\bar{f}_n(\vec \xi') - \bar{f}_n(\vec \xi)\|_{n}^2 + \cI_{L}(\vec X)) \leq 2(\|\vec \xi' - \vec \xi\|_{n}^2 + \cI_{L}(\vec X)) \lesssim \max\{\nms,\cI_{L}(\vec X)\}
\]
deterministically, not just with non-negligible probability.
\end{proof}

\begin{lemma}\label{L:Brack}
Under Assumptions \ref{A:Convex},\ref{A:IsoData},\ref{A:Interpolate}, we have that 
$$\rho_{\vec S}(\vec X, \ft) \le \rho_{\cO}(n, \PP, \ft) \lesssim \max\{\cI_L(n, \PP),\nms\}.$$
\end{lemma}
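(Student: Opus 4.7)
\textbf{Proof proposal for Lemma \ref{L:Brack}.}

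The lemma combines two claims: (i) $\rho_{\cO}(n,\PP,\ft) \lesssim \max\{\cI_L(n,\PP), \nms\}$ and (ii) $\rho_S(\vec X,\ft) \le \rho_\cO(n,\PP,\ft)$ on a $\vec X$-event of nonnegligible probability. I would handle (i) first, as it is purely a convex-geometric/isometry argument, and then use (i) together with the almost-interpolation hypothesis to prove (ii).

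For (i), I would first fix any realization of $\vec X$ on the event $\cE_L = \{\cI_L(\vec X) \le \cI_L(n,\PP)\}$ which by Definition \ref{D:BrackL} has probability $\ge 1 - n^{-1}$. For any $f \in \cO_{M'\nms}$, the supporting-hyperplane computation at the end of the proof of Theorem \ref{T:Fix} (using convexity of $\cF_n$ and the fact that $\erm_{\vec X}$ is the $\|\cdot\|_n$-projection of $\vec Y$ onto $\cF_n$) yields $\|f - \erm\|_n^2 \le M'\nms$. Applying the lower isometry bound pointwise on $\cE_L$ then gives $\|f - \erm\|^2 \le 2(M'\nms + \cI_L(n,\PP))$, whence $\diam_\PP(\cO_{M'\nms})^2 \le 8(M'\nms + \cI_L(n,\PP))$. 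By Assumption \ref{A:SoP} we have $1 - n^{-1} \ge 2\exp(-c_I n\nms)$ for large $n$, so this estimate verifies \eqref{eq:rho_O} with $\delta = 16\max\{M'\nms,\cI_L(n,\PP)\}$, proving (i).

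For (ii), I would fix any $\delta > \rho_\cO(n,\PP,\ft)$ and work on the intersection of the interpolation event $A$ from Assumption \ref{A:Interpolate} with the diameter event $B = \{\diam_\PP(\cO_{M'\nms}) \le \sqrt\delta\}$ from Definition \ref{Def:rho_O}; by inclusion-exclusion, $\Pr_\cD(A\cap B) \ge \exp(-c_I n\nms)$. The key observation, parallel to the computation \eqref{Eq:erm_perturb} in the proof of Theorem \ref{T:RandomFull}, is that on $A$ any $\vec \xi' \in B_n(\vec\xi, M\nm)$ produces an ERM $\erm(\vec X,\vec\xi')$ lying in $\cO_{M'\nms}(\vec X,\vec\xi)$: writing $\vec Y = f^*(\vec X)+\vec\xi$, $\vec Y' = f^*(\vec X)+\vec\xi'$, the minimizing property of $\erm(\vec X,\vec\xi')$ with respect to $\vec Y'$ and two applications of the triangle inequality give
\[
\|\erm(\vec X,\vec\xi')_{\vec X} - \vec Y\|_n^2 - \|\erm(\vec X,\vec\xi)_{\vec X}-\vec Y\|_n^2 \le (4 C_I + 6 M^2)\nms,
\]
so picking $M' \ge 4C_I + 6M^2$ suffices. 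On $A\cap B$, combining this membership with the diameter bound yields $\|\erm(\vec X,\vec\xi') - \erm(\vec X,\vec\xi)\|^2 \le \delta$, i.e.\ the stability event $S_\delta(\vec X,\vec\xi)$ of \eqref{eq:a_brack} holds.

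The final obstacle, which I view as the main subtle step, is converting the joint bound $\Pr_\cD(S_\delta) \ge \exp(-c_I n\nms)$ into a conditional bound $\Pr_{\vec\xi}(S_\delta \mid \vec X) \ge \exp(-c_2 n\nms)$ for a nonnegligible set of $\vec X$, since $\rho_S$ is defined via a conditional probability. I would use the elementary reverse-Markov inequality: if $Z \in [0,1]$ with $\E Z \ge \mu$, then $\Pr(Z \ge \mu/2) \ge \mu/2$ (obtained by splitting $\E Z = \int_0^{\mu/2}\Pr(Z\ge t)dt + \int_{\mu/2}^1\Pr(Z\ge t)dt$). Applied to $Z = \Pr_{\vec\xi}(S_\delta \mid \vec X)$ with $\mu = \exp(-c_I n\nms)$, this shows that on a $\vec X$-event of probability at least $\frac12\exp(-c_I n\nms)$, the conditional probability is at least $\frac12\exp(-c_I n\nms) \ge \exp(-c_2 n\nms)$ for a slightly larger constant $c_2$. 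Since $\delta$ was arbitrary above $\rho_\cO(n,\PP,\ft)$, letting $\delta \downarrow \rho_\cO(n,\PP,\ft)$ yields $\rho_S(\vec X,\ft) \le \rho_\cO(n,\PP,\ft)$ on this event, completing the proof.
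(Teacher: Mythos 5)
Your proposal is correct and follows essentially the same route as the paper's proof: part (i) is the same combination of the supporting-hyperplane argument from Theorem \ref{T:Fix} with the lower-isometry event, and part (ii) is the same triangle-inequality computation plus a Fubini-type argument (the paper invokes Fubini's theorem directly, which amounts to your explicit reverse-Markov step). The only differences are cosmetic — you present the two inequalities in the opposite order and spell out the reverse-Markov inequality, which makes the conditional-probability extraction cleaner than the paper's somewhat terse (and slightly garbled) statement of $\cE'$.
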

\begin{proof}
Let $\cE \subset \cX^n \times \mathbb R^n$ be the event that $\diam_{\PP}(\cO_{M'\nms}) \le \rho_{\cO}(n, \PP, \ft)$ and $\|\widehat{f}_n(\vec X, \vec \xi) - \vec Y\|_n^2 \le C_I \nms$. For any $\vec X \in \cX^n$, let $\cE_{\vec X} = \{\vec \xi \in \mathbb R^n\,|\,(\vec X, \vec \xi) \in \cE\}$.

Since $\Pr(\cE) \ge \exp(-c_I n\nms)$ by definition, the set 
$$\cE' = \{\vec X \in \cX^n\,|\, \Pr_{\vec \xi}(\cE_{\vec X}) \le \rho_{\cO}(n, \PP, \ft)) \ge \exp(-c_I n \nms) \in \cE\}$$ 
satisfies $\Pr_{\cX^n}(\cE') \ge \exp(-(c_I/2) n \nms)$ by Fubini's theorem.

Fix $\vec X \in \cE'$, $\vec \xi \in \cE_{\vec X}$, and let $\vec Y =  \ft|_{\vec X} + \vec \xi$ as usual. If $\|\vec \xi' - \vec \xi\|_n \le \frac{\sqrt{M' - C_I}}{2}\nm$ then 
\begin{align*}\|\widehat{f}_n(\vec X,\vec \xi') - \vec Y\|_n^2 \le 2(\|\widehat{f}_n(\vec X, \vec \xi') - \widehat{f}_n(\vec X, \vec \xi)\|_n^2 + \|\widehat{f}_n(\vec X, \vec \xi) - \vec Y\|_n^2) \\ 
\le  2((M' - C_I) \nms + C_I) \nms = M'\nms
\end{align*} 
where we have used $\|\widehat{f}_n(\vec X, \vec \xi') - \widehat{f}_n(\vec \xi)\|_n \le \|\vec \xi - \vec \xi'\|_n$ as $\widehat{f}_n$ is $1$-Lipschitz in the noise. In particular $\widehat{f}_n(\vec X, \vec \xi') \in \cO_{M'\nms}$ and so $\|\widehat{f}_n(\vec X, \vec \xi') - \widehat{f}_n(\vec X, \vec \xi)\| \le \rho_{\cO}(n, \PP, \ft)$, as $(\vec X, \vec \xi) \in \cE$. Thus,  if $M'$ is chosen large enough so that $M \le \frac{\sqrt{M' - C_I}}{2}\nm$, one obtains \eqref{eq:a_brack} is satisfied with $\delta(n) = \rho_{\cO}(n, \PP, f)$ and $c_2 = c_I/2$, implying that $\rho_{\vec S}(\vec X, \ft) \le \rho_{\cO}(n, \PP, f)$.

To see that $\rho_{\cO}(n, \PP, \ft) \lesssim \max\{\cI_L(n, \PP), \nms\}$ is even easier: it's easy to see that $\diam_{\PP_n}(\cO_{M'\nms}) \lesssim \nm$ (see the end of the proof of Theorem \ref{T:Fix} for details), and the definition of the lower isometry remainder implies that $\diam_{\PP}(\cO_{M'\nms}) \lesssim \diam_{\PP}(\cO_{M'\nms}) + \sqrt{\cI_L(n, \PP)}$ on the high-probability event $\cI_L(\vec X) \le \cI_L(n, \PP)$. This yields that for an appropriate choice of $C > 0$, $\delta(n) = C \max\{\cI_L(n, \PP), \nms\}$ satisfies \eqref{eq:rho_O} and hence $\rho_{\cO}(n,\PP, \ft) \le \max\{\cI_L(n, \PP),\nms\}$.
\end{proof}
\subsection{Full Proof of Theorem  \ref{C:A} }\label{ss:rdchat}
Note that when the convex set 
\[
\cF_n:= \{(f(X_1),\ldots,f(X_n)): f \in \cF\}
\]
is not compact, we cannot apply the fixed point theorem directly. However, if we find $\ft \in \cF$ such that
\begin{equation}
      B^2(\erm) \leq \max\{C_1 \cdot V(\erm), C_2/n\},
\end{equation}
where $C_1,C_2 > 0$ are some absolute constants. Then, the proof follows from the argument of \S \ref{ss:pChat}, since the fixed point theorem was only used to find a $\ft \in \cF$ satisfying the last equation.

First, let us provide some intuition to our proof. The idea is to find $\ft \in \cF$ with low bias by an iterative argument similar to the proof of Banach's fixed point theorem. If $\erm$ has a ``high'' bias on some underlying $f_0 \in \cF$, then, $\erm$ should have a lower or equal bias when the underlying function is  $f_1 = \E_{0} \erm$, where $\E_{0}$ means taking expectation when $\ft = f_0$.  If ERM attains a ``low'' bias, then we are done. Otherwise, consider the underlying $f_2 = \E_{1}\erm$, and repeat this process for $n$ times. We will show that some $m = O(n)$, $\ft = f_{m}$ will be our ``admissible'' function.
% the bias cannot increase forever and eventually should be lower than the variance.
% Therefore, for some $m \leq n$, $f_{m}$ will be our admissible function.  

This idea is captured in the following lemma (that we will prove below):
\begin{lemma}\label{Lem:iteration}
    Let $f_0 \in \cF$ and for any $i \geq 1$ denote by  $f_{i} = \E_{i-1} \erm$. Then, there exists $m = O(n)$ such that  
    \begin{equation}\label{Eq:SC}
        B_m^2(\erm) \leq 3 \cdot V_m(\erm) + C/n,
    \end{equation}
    where $B_m^2,V_m$ are the bias and the variance of ERM when $\ft = f_m$.
\end{lemma}
% This certainly implies that $B^2(\erm) \leq \max\{C_1 \cdot V(\erm), C_2/n\}$, so this lemma is all we need to complete the proof of Corollary \ref{C:A}.
% \noindent{In words $\erm$ has ``small bias'' on the underlying $f_m$.}
\begin{proof}[Proof of Lemma \ref{Lem:iteration}]
    First, note that 
    \begin{equation}\label{Eq:LSEMAX}
        \erm = \argmax_{f \in \cF} \{2\langle \vec \xi,f - \ft\rangle_n - \| f - \ft \|_{n}^2\}
        % = \argmax_{f \in \cF} \|f - (\ft + \vec \xi)\|_n^2 - \|\vec \xi\|_n^2.
    \end{equation}
    For each $(\vec \xi,\ft) \in \R^{2n}$ define the score function $L_{\vec \xi,\ft}:\R^n \to \R^{+}$
    \[
        L_{\vec \xi,\ft}(f) = 2\langle  \vec \xi, f - \ft\rangle_n - \|f - \ft \|_{n}^2.
    \]
    and let
  $L_{\ft}(\erm) := \Med  L_{\vec \xi,\ft}(\erm)$. Note that as $\vec \xi $ is isotropic Gaussian then \[ 1- O(1/\sqrt{n}) \leq L_{\ft}(\erm) \leq 2 + O(1/\sqrt{n}).\]   
        % }when the underlying function is $f \in \cF$.
     % we consider the score of $\erm$ when the underlying function is $f_1$. 

    \paragraph{Claim.} For every $\ft \in \cF$, either

    \begin{equation}\label{Eq:LStop}
    B^2(\erm) \leq 3V(\erm) + C/n
    \end{equation}
    or
    \begin{equation}\label{Eq:LIter}
        L_{\E \erm}(\erm) -  \E L_{\ft}(\erm) \geq \frac{B^2(\erm)}{3},
        % \geq Cn^{-1}.
    \end{equation}
where $L_{\E_0 \erm}(\erm) $ means that $\erm$ is with respect that the ground truth $\E_0\erm$, and $\E L_{\ft}(\erm)$  means that $\erm$ over the ground truth $\ft$.

    The lemma follows from the claim by an iterative argument. Indeed, let $f_0 = \ft \in \cF$ be some function. If $f_0$ satisfies \eqref{Eq:LStop}, we are done. Otherwise, $f_1 = \E \erm$ satisfies $L_{f_1}(\erm) - L_{f_0}(\erm) \ge \frac{C'}{n}$. if $f_1$ satisfies \eqref{Eq:LStop}, then we stop. Otherwise, repeat the same argument with $\ft = f_1$ and $f_2 = \E \erm(f_1)$, and so on. Since the score $L_f(\erm)$ is bounded above by a constant, eventually some $f_{m}$ with $m \le n/C$ will have to satisfy \eqref{Eq:SC}. It thus remains to prove the claim.

    \paragraph{Proof of the claim} Suppose that \eqref{Eq:LStop} does not hold, i.e., $B_0^2(\erm) \le 3 V_0(\erm) + C/n$. Write $f_0 = \ft$,  $f_1  = \E_0 \erm$, set $\tilde V_0(\erm):= \max\{3V_0(\erm),C/n\}$ and let $\tilde{f}_n$ be the restricted LS on 
    \[\cG:= B_n(f_1,\sqrt{\tilde{V}_0(\erm)}) = \{ f \in \cF: \|f - f_1\|_{n}^2 \leq \tilde{V}_0(\erm)\} ,\] namely, 
    $
        \tilde{f}_n := \argmin_{f \in \cG}\sum_{i=1}^{n}(Y_i - f_1(x_i))^2.
    $  Using \eqref{Eq:LSEMAX},  we have that
    \[
       L_{\vec \xi,f_1}(\erm) \geq L_{\vec \xi,f_1}(\tilde{f}_n) =  \sup_{f \in \cG} 2\langle  \vec \xi, f - f_1\rangle_n - \| f - f_1 \|_{n}^2  \geq \sup_{f \in \cG} 2\langle \vec \xi, f - f_1\rangle_n - \tilde{V}_0(\erm),
    \]
    where the first inequality follows from $\cG \subseteq \cF$, and the second equality follows from the fact that $\mathrm{Diam}(\cG)^2 \leq \tilde{V}_0(\erm)$.
    % where in the first inequality we used that $\cG \sub$
    
   By Chebyshev's inequality, with probability at least $2/3$, we have that $\erm \in \cG$; we denote this event by $\cE$. Under this event, we have that 
   \[
     \sup_{f \in \cG} \langle \vec \xi, f \rangle_n \geq  \langle \vec \xi, \erm \rangle_n.
   \]
   Hence, using the last two equations, the following holds on the event $\cE$:
    \begin{align*}
        L_{\vec \xi,f_1}(\erm) - L_{\vec \xi, f_0}(\erm) &\geq L_{\vec \xi,f_1}(\tilde{f}_n) - L_{\vec \xi,f_0}(\erm) \\&\geq \|\erm - f_0\|_{n}^2 -  \langle \vec \xi, f_1 - f_0 \rangle_n -\tilde{V}_0(\erm).
    \end{align*}
    Next
    , note that $\langle  \vec \xi, f_1 - f_0 \rangle_n \sim N(0,\|f_1 - f_0\|_{n}^2) = N(0,B_0^2(\erm))$. Hence, there exists an event $\cE_1 \subset \cE$ that holds with probability of at least $0.6$ such that
    \begin{equation}\label{Eq:MedMed}
        L_{\vec \xi,f_1}(\erm) - L_{\vec \xi,f_0}(\erm) \geq  \|\erm - f_0\|_{n}^2 -  \tilde{V}_0(\erm) - C \cdot B(\erm)/ \sqrt{n}.
    \end{equation}
    Next, using the fact that $\| \erm - f_0 \|_{n}$ is  $n^{-1/2}$-Lipschitz and the LCP inequality \eqref{Eq:CCP}, we know that \[\| \erm - f_0 \|_{n} -\E_0\| \erm - f_0 \|_{n}\] is zero mean and $n^{-1/2}$ sub-Gaussian, so by a standard tail integration, one has
    \[
      \E_0\|\erm - f_0\|_{n}^2 = \Med_0\|\erm - f_0\|_{n}^2 + O(\max\{\E_0 \|\erm - f_0\|_{n}/\sqrt{n},1/n\}).
    \]
    Finally, we take a median over \eqref{Eq:MedMed}, and use the last equation and obtain:
    \begin{align*}
        L_{f_1}(\erm) - L_{f_0}(\erm)&\geq \E_0\|\erm - f_0\|_{n}^2 - 3V_0^2(\erm) - O(\max\{\sqrt{B_0^2(\erm)/n},\E_0 \|\erm - f_0\|_{n}/\sqrt{n},1/n\}) 
        \\& = B_0^2(\erm) + V_0^2(\erm) - 3V_0^2(\erm) - O(\max\{\E_0 \|\erm - f_0\|_{n}/\sqrt{n},1/n\})
        \\& = B_0^2(\erm) - 2V_0^2(\erm) - O(\max\{\E_0 \|\erm - f_0\|_{n}/\sqrt{n},1/n\})
        \\&\geq B_0^2(\erm)/3,
        \end{align*}
        where we used the assumption $B_0^2(\erm) \geq 3V_0(\erm) + C/n$, for $C \geq 0$  large enough; the claim follows.
\end{proof}
\begin{remark}[On the proof under compactness]
The Brouwer fixed point theorem, which we used in  \S \ref{ss:pChat} to obtain an existence of $\ft \in \cF$ for which $\E_{\vec \xi} \erm = \ft$, is a deep result, and one may ask whether it is essential to the proof. Another commonly used fixed-point theorem is that due to Banach; the Banach fixed point theorem is elementary, but requires a bound $\|F(f) - F(g)\|_n \le c\|f - g\|_n$ for some $c < 1$ and all $f, g \in \cF$. One has 
\begin{equation}\label{eq:proj_avg}
    \|F(f) - F(g)\|_n \le \E_{\vec \xi} \|\erm(f + \vec \xi) - \erm(g + \vec \xi)\|_n,
\end{equation} 
 Note that $\|\erm(f + \vec \xi) - \erm(g + \vec \xi)\|_n \le \|f - g\|_n$ because $P$ is $1$-Lipschitz. Also, it's easy to see that there exists some $\vec \xi$ for which $\|\erm(f + \vec \xi) - \erm(g + \vec \xi)\|_n$ is strictly smaller than $\|f - g\|_n$, and continuity of $P$ ensures that the same holds for all $\vec \xi'$ sufficiently close to $\vec \xi$, implying $\|F(f) - F(g)\|_n < \|f - g\|_n$. But this is not yet sufficient to apply the Banach fixed point theorem. 

Via more delicate convex-geometric arguments, though, one can show that if $\|\vec \xi\|_n$ is sufficiently large compared to the diameter of $\cF$ (say, $\|\vec \xi\|_n > C \cdot \diam(\cF)$) and $\langle f - g, \vec \xi\rangle_n\ge \epsilon \|f - g\|_n \|\vec \xi\|_n$ (i.e., the angle between $f - g$ and $\vec \xi$ is bounded away from 90 degrees) then 
$$\|\erm(f + \vec \xi) - \erm(g + \vec \xi)\|_n \le (1 - \delta)\|f - g\|_n$$ 
for some $\delta$ depending on $\epsilon$ and $C$, which allows one to conclude, using \eqref{eq:proj_avg}, that $\|F(f) - F(g)\|_n \le c \|f - g\|_n$ for some $c < 1$ and all $f, g \in \cF$. Hence, the Banach rather than the Brouwer fixed-point theorem can be used in the proof, rendering it elementary but more technical. 
\end{remark}

\subsection{Addendum to \S \ref{ss:ronT} }
\begin{example}\label{Ex:One}
Let $\cX = \mathbb{S}^n$, the Euclidean unit sphere of dimension $n$ contained in $\mathbb R^{n + 1}$, let $\PP$ be the uniform measure on $\cX$, and for each hyperplane $H$ passing through the origin, let $\PP^H$ be the uniform measure on $\cX \cap H \cong \mathbb{S}^{n - 1}$. The set of such hyperplanes is the real $(n + 1, n)$ Grassmanian, denoted $\Gr$. 

For each $H$, let $1_H$ denote the characteristic function of $H$, and let $\cF = \conv \{1_H, 1 - 1_H: H \in \Gr\}$. Note that in $L^2(\PP)$, $\cF$ reduces to the class of constant functions between $0$ and $1$ because for any $H$, $1 - 1_H \equiv 0$ and $1 - 1_H \equiv 1$ almost everywhere on $\cX$. Similarly, in $L^2(\PP^H)$, $1_H \equiv 1$ and $1 - 1_H \equiv 0$, while for any $H' \neq H$, 
$1_H' \equiv 1$ and $1 - 1_H \equiv 0$ because $H \cap H'$ has $n$-dimensional Hausdorff measure $0$. In particular, regressing $\cF$ on $L^2(\PP)$ or $L^2(\PP^H)$ reduces to estimating an element of $[0, 1]$ given $n$ noisy observations, for which the minimax rate is $\frac{1}{n}$.

On the other hand, let $\cP = \{\PP\} \cup \{\PP^H: H \in \Gr\}$, and consider the distribution-unaware minimax risk $\mathcal M^{(du)}_n(\cF, \cP)$. We claim that $\mathcal M^{(du)}_n(\cF, \cP) = \omega(1)$ \textit{even when there is no noise}. The intuition behind this is that when the estimator $\bar f_n$ observes $(X_1, Y_1), \ldots, (X_n, Y_n)$, it does not ``know'' whether the distribution is $\PP$ or $\PP^H$ where $H = \operatorname{Span}(X_1, \ldots, X_n)$, and thus it does not know whether to generalize the observations in a way which is consistent with the $L^2(\PP)$ norm or the $L^2(\PP^H)$ norm.

To see this formally, let $\bar f_n: \cD \to \cF$ be some estimator. For any $\vec X \in \cX^n$, let $\Span(\vec X) := \Span(X_1, \ldots, X_n)$, which is an element of $\Gr$ with probability $1$. In the noiseless setting, any sample $\vec X, \vec Y$ such that $\Span(\vec X) \in \Gr$ will have $\vec Y$ a multiple of $(1, \ldots, 1)$ with probability $1$, so we may as well consider $\vec Y$ to be a constant function. To get a lower bound on the minimax rate, it is also sufficient to consider only the extreme points of $\cF$, namely the functions in $\cF' = \{1_H, 1 - 1_H: H \in \Gr\}$, which are $\{0, 1\}$-functions, so we think of our estimators as functions $\bar f_n: \cX^n \times \{0, 1\} \to \cF$. We also assume for simplicity that $\bar f_n$ always returns a function in $\cF'$; if $\bar f_n$ is allowed to take values in the full convex hull $\cF$, one obtains the same lower bound on the risk of $\bar f_n$ by a more complicated version of the argument below.

Suppose, for example, that the estimator is given the sample $(\vec X, 1)$. In the noiseless setting, there is no point in returning a function inconsistent with the observations, so $\bar f_n$ must return either $1_H$ for $H = \Span(\vec X)$ or $1 - 1_{H'}$ for some $H' \neq H$ (not containing any of the $X_i$). Similarly, on the sample $(\vec X, 0)$ an optimal estimator $\bar f_n$ will return either $1 - 1_H$ or $1_{H'}$ for some $H' \neq H$. Let 
\begin{align*}
p_{H, 0} &= \Pr_{X_i \sim \PP^H} \{\bar f_n(\vec X, 0) = 1 - 1_H\} \\
p_{H, 1} &= \Pr_{X_i \sim \PP^H} \{\bar f_n(\vec X, 1) = 1_H \} \\
p_0 &= \Pr_{X_i \sim \PP} \{\bar f_n(\vec X, 1) = 1 - 1_{\Span(\vec X)}\} \\
p_1 &= \Pr_{X_i \sim \PP} \{\bar f_n(\vec X, 1) = 1_{\Span(\vec X)}\}.
\end{align*}

The Grassmannian $\Gr$, which is itself isomorphic to $\mathbb{S}^n$, has a uniform (i.e., rotationally invariant) probability measure, and one has $p_i = \mathbb E_{H \sim U(\Gr)}[p_{H, i}]$: choosing $n$ points from the unit sphere in $\mathbb R^{n + 1}$ is the same as choosing a uniform hyperplane and then choosing $n$ points uniformly from that hyperplane, by rotational invariance. 

One computes that $p_{H, i}$ and $p_i$ determine the error of $\bar f_n$ on $\cF'$ as follows:
\[
\varepsilon_{\ft, \PP^H}^2 := \mathbb E\int (\bar f_n- \ft)^2\,d\PP^H = 
\begin{cases} 
p_{H, 0} & \ft \in \{1 - 1_H\} \cup \{1_H'\}_{H' \neq H} \\
p_{H, 1} & \ft \in \{1_H\} \cup \{1 - 1_H'\}_{H' \neq H}
\end{cases},
\]
\[
\varepsilon_{\ft, \PP}^2 := \mathbb E\int (\bar f_n-\ft)^2\,d\PP = 
\begin{cases} 
1 - p_0 & \ft \in \{1_H\}_{H \in \Gr} \\
1 - p_1 & \ft \in \{1 - 1_H\}_{H \in \Gr}
\end{cases}.
\]
To lower-bound the distribution-unaware minimax risk, we consider the expected error of $\bar f_n$ under two different scenarios: when we choose a hyperplane $H$ uniformly at random and measure the error of $\bar f_n$ on the function $1_H$ when the input distribution is $\PP^H$, and when we fix $\ft = 1 - 1_{H_0}$ and measure the expected error of $\bar f_n$ when the distribution is $\PP$. By the above, the error in the first scenario is $\E_{H \sim U(\Gr)}[p_{H, 1}] = p_1$, while the error in the second scenario is $1 - p_1$. As $\max \{1 - p_1, p_1\} \ge \frac{1}{2}$, this shows that $\mathcal M^{(du)}_n(\cF, \cP) = \omega(1)$, as desired.

% \begin{remark}\label{Rem:ExampleOneRm}
%     Note that the $L^2(\PP)$-diameter of the set of functions which generalize the observation vector $(\vec X, 1)$ (say) is huge, as this set includes both $1_H$ and $1 - 1_{H'}$, where $H = \Span(\vec X)$ and $H'$ is any other hyperplane.
% \end{remark}

\begin{remark}\label{Rem:Measurability}
    Example \ref{Ex:One} may seem unnatural, as the measures $\PP$ and $\PP^H$ are all mutually singular, which leads to the ``collapse'' of the function class in different ways in $L^2(\PP)$ and each $L^2(\PP^H)$. To exclude such pathology, one might wish to consider only families of distributions all of which are absolutely continuous with respect to some reference measure $\PP^{(0)}$. It is not difficult, though, to modify Example \ref{Ex:One} in such a way which avoids any measurability issues: for given $n$, let $F = \mathbb F_q$ be the finite field of cardinality $q$ for some $q > n$, let $\cX = F^{n + 1}\backslash \{0\}$, let $\PP$ be the uniform probability measure on $\cX$, and for each $H$ in the set $\Gr(F)$ of $n$-dimensional linear subspaces of $F^{n + 1}$, let $\mathbb P^H$ be the uniform probability measure on $H \backslash \{0\} \subset \cX$, let $\cF = \conv \{1_H, 1 - 1_H: H \in \Gr(F)\}$, and let $\cP = \{\PP\} \cup \{\PP^H\}_{H \in \Gr(F)}$ as above. (Note that all measures in $\cP$ are absolutely continuous with respect to $\PP$.) As $q > n$, each hyperplane $H$ has $\PP$-measure $O(n^{-1})$ in $\cX$ and for any $H' \neq H$, $H \cap H'$ has $\PP^H$-measure $O(n^{-1})$, so one easily verifies all the computations in the example are still valid, up to errors of order $O(n^{-1})$.
\end{remark}
% \subsection{Proof of Theorem \ref{T:EVC}}\label{ss:EVC}
\end{example}

\end{document}